\newtheorem{theorem}{Theorem}
\newtheorem{thm}{Theorem}[subsection]
\newtheorem{prop}[thm]{Proposition}
\newtheorem{lemma}[thm]{Lemma}
\newtheorem{cor}[thm]{Corollary}
\theoremstyle{definition}
\newtheorem{exmp}[thm]{Example}
\newtheorem{defn}[thm]{Definition}
\newtheorem{rmk}[thm]{Remark}
\DeclareMathOperator{\Hom}{Hom}
\DeclareMathOperator{\Sym}{Sym}
\newcommand{\colim}{\operatornamewithlimits{colim}}
\DeclareMathOperator{\PreMF}{PreMF}
\DeclareMathOperator{\Ho}{Ho}
\DeclareMathOperator{\Pro}{Pro}
\DeclareMathOperator{\Ind}{Ind}
\DeclareMathOperator{\dmod}{-mod}
\DeclareMathOperator{\Spec}{Spec}
\DeclareMathOperator{\dperf}{-perf}
\DeclareMathOperator{\Alg}{Alg}
\DeclareMathOperator{\shHom}{\mathcal{H}\emph{om}}
\DeclareMathOperator{\pt}{pt}
\DeclareMathOperator{\Aut}{Aut}
\DeclareMathOperator{\dR}{dR}
\DeclareMathOperator{\Irr}{Irr}
\DeclareMathOperator{\QCoh}{QCoh}
\DeclareMathOperator{\Map}{Map}
\DeclareMathOperator{\Fun}{Fun}
\DeclareMathOperator{\Perf}{Perf}
\DeclareMathOperator{\IndCoh}{IndCoh}
\DeclareMathOperator{\Coalg}{Coalg}
\DeclareMathOperator{\Tate}{Tate}
\newcommand{\mf}{\mathfrak}
\newcommand{\opcit}{\emph{op.\,cit.} }
\newcommand{\bs}{\backslash}
\newcommand{\cat}{\mathbf}
\newcommand{\Z}{\mathbb{Z}}
\newcommand{\C}{\mathbb{C}}
\newcommand{\G}{\mathbb{G}}
\newcommand{\OO}{\mathcal{O}}
\newcommand{\LL}{\mathcal{L}}
\newcommand{\LLf}{\widehat{\LL}}
\newcommand{\h}{\mf{h}}
\newcommand{\g}{\mf{g}}
\newcommand{\wh}{\widehat}
\DeclareMathOperator{\cone}{cone}
\newcommand{\hcdR}{\widehat{\mathrm{dR}}}
\newcommand{\HP}{HP}
\DeclareMathOperator{\Coh}{Coh}
\title{Equivariant localization and completion in cyclic homology and derived loop spaces}
\date{}
\author{Harrison Chen}
\newcommand{\HH}{HH}
\begin{document}
\maketitle

\begin{abstract}
We prove an equivariant localization theorem over an algebraically closed field of characteristic zero for smooth quotient stacks by reductive groups $X/G$ in the setting of derived loop spaces as well as Hochschild homology and its cyclic variants.  We show that the derived loop spaces of the stack $X/G$ and its classical $z$-fixed point stack $\pi_0(X^z)/G^z$ become equivalent after completion along a semisimple parameter $[z] \in G//G$, implying the analogous statement for Hochschild and cyclic homology of the dg category of perfect complexes $\Perf(X/G)$.  We then prove an analogue of the Atiyah-Segal completion theorem in the setting of periodic cyclic homology, where the completion of the periodic cyclic homology of $\Perf(X/G)$ at the identity $[e] \in G//G$ is identified with a 2-periodic version of the derived de Rham cohomology of $X/G$.  Together, these results identify the completed periodic cyclic homology of a stack $X/G$ over a parameter $[z] \in G//G$ with the 2-periodic derived de Rham cohomology of its $z$-fixed points.
\end{abstract}

\tableofcontents

\section{Introduction}

Derived loop spaces appear naturally in questions of Hochschild homology.  When $X$ is a prestack such that the derived categories $\QCoh(X)$ and $\IndCoh(X)$ are 
compactly generated by $\Perf(X)$ and $\Coh(X)$ respectively, their Hochschild homology can be computed in two ways.  On one hand, it is computed by the usual cyclic bar complex, but it can also be naturally identified with the global sections of a sheaf of functions or distributions on the derived loop space $\LL X$ of $X$ \cite{BZN:NT}:
$$\HH(\QCoh(X)) = \OO(\LL X), \;\;\;\;\;\;\; \HH(\IndCoh(X)) = \omega(\LL X).$$
When $X$ is a derived scheme, the loop space $\LL X$ is equivalent to the shifted odd tangent bundle via a derived variant of the Hochschild-Kostant-Rosenberg theorem, reflecting the Zariski-local nature of derived loops.  Introducing the $S^1$-action, results in \cite{BZN:LC} \cite{Pr:IL} identify periodic cyclic homology with 2-periodic derived de Rham (co)homology:
$$\HP(\Perf(X)) = \OO(\LL X)^{\Tate} \simeq C^\bullet_{\dR}(X; k)((u)), \;\;\;\; \HP(\Coh(X)) = \omega(\LL X)^{\Tate} \simeq C_{\bullet, \dR}(X; k)((u))$$
where $u \in C^2(BS^1; k)$ is the degree two universal Chern class.  Furthermore, over $k=\C$, the main result in \cite{Bh:DDR} identifies derived de Rham cohomology with Betti de Rham cohomology of the analytification, and we have that $\HP(\Perf(X)) \simeq C^\bullet_{\dR}(X^{an}; k)((u))$.  Our goal in this paper is to investigate the periodic cyclic homology where $X$ is taken to be a smooth quotient stack.

A difficulty in understanding the Hochschild homology of stacks which does not appear for schemes is the failure of derived loop spaces to form a cosheaf for the smooth topology; informally, Hochschild homology is Zariski local but not smooth local\footnote{This depends on definitions.  Our notion of Hochschild homology is the one that is computed by a cyclic bar complex of a small dg  category.  Other authors have considered a variant which is obtained by extending the notion of Hochschild homology of affine schemes to stacks via flat descent, which is smooth local by definition and is equivalent to the a completed version of the former.}.  On the other hand, as argued in \cite{BZN:LC}, sheaves and functions on formal loop spaces (i.e. loop spaces completed at constant loops) can be computed via a smooth covers.  We begin to bridge this gap by understanding the global loop space $\LL(X/G)$ after completion over fibers of the map to $\LL(BG) = G/G$ as well as to its affinization $G//G$.  We show that completion over $G/G$ gives a certain formal loop space and completion over $G//G$ gives a certain unipotent loop space, formalizing a ``Jordan decomposition for loop spaces'' which appeared in \cite{BZN:LR}, where it was applied to realize a moduli stack of Langlands parameters for representations of real reductive groups.  Informally, we view $X/G$ as a family of formal loop spaces over $G/G$, which we in turn view as a family of unipotent loop spaces over $G//G$, ultimately realizing $X/G$ as a family of unipotent loop spaces over the affine parameter space $G//G$.

This paper contains two main theorems.  The first realizes well-established equivariant localization patterns (e.g. as in \cite{GKM:KD} \cite{Th:AK} \cite{Th:LRR}) in the setting of Hochschild homology via its geometric avatar, the derived loop space.  The second realizes an Atiyah-Segal style (e.g. as in \cite{AS:KC} \cite{Th:AS}) completion theorem identifying completed periodic cyclic homology with 2-periodic equivariant derived de Rham cohomology.  Over $\C$ a theorem of Bhatt \cite{Bh:DDR} gives an identification with the Betti cohomology of the analytification.

We mention a few related results in the literature.  An analogous theory was explored by Block and Getzler in \cite{BG:ECH} in the setting of a compact group $G$ acting on a compact smooth manifold $M$.  Similar results appear in the algebraic setting when $G$ is finite (i.e. $X/G$ is a Deligne-Mumford stack) in Theorem 1.15 of \cite{ACH:DI}, and in the case of an smooth affine quotient stack in Lemma 4.11 and Proposition 4.12 of \cite{HLP:EH}.  We generalize their statements to the case of a general smooth quasi-projective quotient stack.  Our statements have also been investigated in the setting of smooth quotient stacks with finitely many orbits in \cite{BZN:LR}, with special attention to the case $B \bs G/B$ in Theorem 3.5 of \opcit

We will now begin stating our results precisely.  The following theorem is an abridged version of our geometric equivariant localization theorem for derived loop spaces, which appears in the main text as Theorem \ref{MainThm}.
\begin{theorem}[Equivariant localization for derived loop spaces]\label{MainThmIntro}
Let $G$ be a reductive group acting on a smooth variety $X$ over an algebraically closed field $k$ of characteristic 0, and $z \in G$ a semisimple element.  Note that $\LL(X/G)$ naturally lives over $\LL(BG)$; let $\LLf_z(X/G)$ denote the completion of $\LL(X/G)$ along the fiber over the semisimple orbit $\{G \cdot z\}/G \subset \LL(BG)$ and let $\LL_z^u(X/G)$ denote the completion over the saturation $[z] \in G//G$.  For $X$ a smooth variety with a $G$-action, there are functorial $S^1$-equivariant isomorphisms
$$\begin{tikzcd}\LLf_z(\pi_0(X^z)/G^z) \arrow[r, "\simeq"] & \LLf_z(X/G), & & \LL^u_z(\pi_0(X^z)/G^z) \arrow[r, "\simeq"] & \LL^u_z(X/G).\end{tikzcd}$$
\end{theorem}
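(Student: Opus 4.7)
The plan is to reduce both assertions to statements on the centralizer $G^z$ via an equivariant slice theorem, and then to exploit the Jordan decomposition to collapse derived fixed points onto the classical ones. First I would describe the formal completions on the base $\LL(BG) = G/G$. By Luna's étale slice theorem applied to the conjugation action of $G$ on $G$ at the semisimple element $z$, there is an equivalence $\wh{(G/G)}_{[z]} \simeq \wh{G^z}_z/G^z$ of formal quotient stacks, the transverse slice to the conjugacy class $G \cdot z$ being the formal neighborhood of $z$ in $G^z$. The saturation variant is parallel: the preimage of $[z] \in G//G$ in $G$ is the $G$-saturation of $z \cdot \mathcal{U}_{G^z}$, where $\mathcal{U}_{G^z}$ denotes the unipotent variety of $G^z$, and the slice theorem identifies its formal neighborhood as $\wh{G^z}_{z \mathcal{U}_{G^z}}/G^z$. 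Pulling back the projection $\LL(X/G) \to \LL(BG)$ along these formal neighborhoods, both sides of the asserted equivalences become $G^z$-equivariant derived stacks parametrizing pairs $(x, g)$ with $gx = x$ and $g$ in the corresponding formal neighborhood in $G^z$.

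With this reduction the main content becomes a fiberwise claim: writing a formal point of the base as $g = zh$ with $h$ near the identity (resp.\ near the unipotent variety of $G^z$), the derived fixed-point stack $X^{zh}$ should agree with $(\pi_0(X^z))^h$. Classically, the Jordan decomposition of $zh$ yields $X^{zh} = X^z \cap X^h$. For the derived enhancement, smoothness of $X$ and semisimplicity of $z$ imply that every eigenvalue of $z$ on the normal bundle $N_{\pi_0(X^z)/X}$ is nontrivial, so $z - 1$ acts invertibly on $N$. Then $zh - 1 = (z - 1) + z(h - 1)$ remains invertible on $N$ for $h$ formally close to $1$ or to the unipotent locus, so the derived self-intersection defining $X^{zh}$ has vanishing contribution in the normal directions. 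Hence the inclusion $\pi_0(X^z) \hookrightarrow X$ identifies the derived $zh$-fixed points in $X$ with the derived $h$-fixed points in $\pi_0(X^z)$. Assembling this fiberwise identification into a $G^z$-equivariant family over $\wh{G^z}_z$ (resp.\ over $\wh{G^z}_{z \mathcal{U}_{G^z}}$) recovers the asserted equivalence of derived loop stacks.

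The $S^1$-equivariance will be automatic: the loop rotation commutes with the projection $\LL(X/G) \to \LL(BG)$, fixes the closed substack $[z]$, and therefore preserves the formal completion and the constructed comparison map. The principal technical obstacle I anticipate is executing the previous paragraph globally in families over $\wh{G^z}_z$ (or $\wh{G^z}_{z \mathcal{U}_{G^z}}$) at the level of derived stacks, rather than merely at the closed point $g = z$. Here smoothness of $X$ together with reductivity of $G^z$ should allow the equivalence to be checked Zariski-locally on $G^z$-invariant affine opens of $\pi_0(X^z)$, using that the invertibility of $zh - 1$ on the normal bundle persists uniformly over the formal parameter $h$, so that the transverse derived self-intersection vanishes identically in a formal neighborhood.
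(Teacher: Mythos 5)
Your proposal is in close agreement with the paper's own argument. Both hinge on the same two ingredients: (i) Luna's \'etale slice theorem applied to the conjugation action of $G$ on itself at $z$, which reduces the problem to the centralizer $G^z$ with $z$ central (this is Proposition~\ref{EtaleProp} in the paper); and (ii) the observation that $z$ (hence also nearby or saturated $zh$) acts invertibly on the conormal space $N^*_x(\pi_0(X^z)/X)$ because the eigenvalue $1$ does not occur there, which is what collapses the derived self-intersection in the normal directions. The paper's Theorem~\ref{MainThm} executes (ii) by writing explicit Koszul resolutions for the derived intersections (using smoothness to get regular sequences) and exhibiting an invertible change of variables in $GL_r(A \otimes B)$, checked on local rings at points $(x,w)$ of the classical inertia stack; your ``vanishing contribution in the normal directions'' claim, which you flag as the technical crux, is exactly what this calculation proves. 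The one substantive difference is that the paper first establishes the stronger statement that $\ell_{z,U}$ is \'etale over a Zariski open $U/G \subset G/G$ (using Proposition~\ref{NearbyPoints} to control fixed loci of nearby group elements), and then deduces the formal and unipotent completion statements as corollaries; you aim directly at the completions, which is sufficient for the theorem as stated. Your fiberwise phrasing (``over formal points $g = zh$'') needs the standard caveat that formal neighborhoods have essentially no new $k$-points, so the argument must really be run over local (or Artinian test) rings --- precisely what the paper's explicit Koszul-resolution computation over $\OO_{X,x} \otimes \OO_{G,w}$ accomplishes.
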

We remark briefly on the assumptions of the theorem.  We assume that $k$ is algebraically closed and that $z$ is semisimple in reductive $G$ since we argue via the Luna slice theorem, and we work in characteristic 0 since we model derived schemes locally as dg algebras.  The assumption that $X$ is smooth is used to compute via Koszul resolutions, and is essential since the statement is false otherwise.

We note that in the case that $G$ is a torus, these statements can be strengthened: the above maps are equivalences on a Zariski open neighborhood over $z$, recovering the equivariant localization in \cite{CG:RTCG} for $K$-theory in the setting of periodic cyclic homology.  However, for a nonabelian reductive group $G$, this fails even in the case when $X$ is a point (see Remark \ref{RmkTorus} and Example \ref{LocalFailure}).  Our result implies the following interpretation of derived fixed points, which also appears in \cite{ACH:DI} as Corollary 1.12.
\begin{cor}\label{DerivedInv}
Let $G$ be a reductive group acting on a smooth variety $X$ and $z \in G$ semisimple.  We have a natural identification of the derived $z$-fixed points:
$$\begin{tikzcd}
\LL(\pi_0(X^z)) \arrow[r, "\simeq"] & X^z := \LL(X/G) \times_{BG} \{z\}.\end{tikzcd}$$ 
\end{cor}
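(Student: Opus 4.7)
I would deduce the corollary from Theorem~\ref{MainThmIntro} by base-changing the natural equivalence $\LLf_z(\pi_0(X^z)/G^z) \simeq \LLf_z(X/G)$ (over $G/G = \LL(BG)$) along the point $\{z\} \hookrightarrow G/G$, and then computing each side.

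Since $\{z\} \to G/G$ factors through the closed substack $\{G \cdot z\}/G$ along which the completions are formed, standard properties of formal completions yield
$$\LLf_z(X/G) \times_{G/G} \{z\} \simeq \LL(X/G) \times_{G/G} \{z\} =: X^z,$$
and similarly $\LLf_z(\pi_0(X^z)/G^z) \times_{G/G} \{z\} \simeq \LL(\pi_0(X^z)/G^z) \times_{G/G} \{z\}$. Theorem~\ref{MainThmIntro} thus reduces the corollary to identifying the latter derived fiber with $\LL(\pi_0(X^z))$.

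Setting $Y := \pi_0(X^z)$, the structural map $\LL(Y/G^z) \to G/G$ factors through $\LL(BG^z) = G^z/G^z$ (via the inclusion $G^z \hookrightarrow G$), and $\{z\} \to G/G$ factors through $\{z\} \to G^z/G^z$. Hence the base change further reduces to $\LL(Y/G^z) \times_{G^z/G^z} \{z\}$. Using the presentation
$$\LL(Y/G^z) \simeq \bigl\{(y, h) \in Y \times G^z : h \cdot y = y\bigr\}/G^z$$
and factoring $\{z\} \to G^z/G^z$ through the cover $G^z \to G^z/G^z$, this derived fiber becomes $(Y \times \{z\}) \times_{Y \times Y} Y$, where the map $Y \times \{z\} \to Y \times Y$ sends $(y,z) \mapsto (y, z \cdot y)$ and $Y \to Y \times Y$ is the diagonal. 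Since $z$ acts trivially on $Y = \pi_0(X^z)$ by definition of the fixed points, this first map coincides with the diagonal, so the derived intersection is $Y \times_{Y \times Y} Y \simeq \LL(Y) = \LL(\pi_0(X^z))$.

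The main step requiring attention is the final derived base change: one must verify that the $G^z$-quotient is absorbed correctly when pulling back through the cover $G^z \to G^z/G^z$ (which works because $z$ is central in $G^z$, so its conjugation-orbit is a single point), and that the trivial action of $z$ on $X^z$ causes the derived self-intersection of the diagonal to emerge, producing $\LL(X^z)$. Everything else follows formally from Theorem~\ref{MainThmIntro} together with the standard behavior of formal completions under base change to a point of the completed locus.
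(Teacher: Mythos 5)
Your strategy — base-change the formal-loops equivalence from Theorem \ref{MainThmIntro} along $\{z\}$, then compute the fiber directly using that $z$ acts trivially on $\pi_0(X^z)$ to collapse the graph of $z$ to the diagonal — is essentially what the paper does: the shift-by-$z$ automorphism of $\LL(\pi_0(X^z)/G^z)$ encodes precisely that collapse, and ``loop spaces commute with fiber products'' then yields the identification. The conclusion is correct.

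However, the intermediate assertion
$$\LLf_z(\pi_0(X^z)/G^z) \times_{G/G} \{z\} \simeq \LL(\pi_0(X^z)/G^z) \times_{G/G} \{z\}$$
is not quite right, and the subsequent reduction to $\LL(Y/G^z) \times_{G^z/G^z} \{z\}$ does not follow from factorizing alone. The point is that $\LLf_z(\pi_0(X^z)/G^z)$ is completed along the preimage of the single orbit $\{z\}/G^z \subset G^z/G^z$, whereas $G^z/G^z \to G/G$ is not a monomorphism: it is only locally \'{e}tale near $z$ (Proposition \ref{EtaleProp}), and its fiber over $\{z\}$ contains every $G$-conjugate of $z$ lying in $G^z$ — for $z$ regular with $G^z$ a torus, the full Weyl orbit (cf.\ Example \ref{LocalFailure}). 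Those extra components appear in $\LL(\pi_0(X^z)/G^z) \times_{G/G} \{z\}$ but are killed by the completion, so the displayed identity fails. What one should prove instead is
$$\LLf_z(\pi_0(X^z)/G^z) \times_{G/G} \{z\} \simeq \LL(\pi_0(X^z)/G^z) \times_{G^z/G^z} \{z\},$$
which follows once you check $\widehat{\{z\}}/G^z \times_{G/G} \{z\} \simeq \{z\}$; the latter needs the local \'{e}taleness of $G^z/G^z \to G/G$ combined with Lemma \ref{FormalNbhd}, not just the generic behavior of completions restricted to their center. Once you are fibering over $G^z/G^z$ and have specialized at $z \in G^z$, the remaining computation — the fiber is $Y \times_{Y\times Y} Y$ because $z$ acts trivially on $Y$, hence $\LL(Y)$ — is correct and is the same observation the paper packages into the shift-by-$z$ map.
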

\begin{proof}
The ``shift by $z$'' map on $\LL(\pi_0(X^z)/G^z))$ is a (non-$S^1$-equivariant!) equivalence, so that 
$$\LLf_z(\pi_0(X^z)/G)) \simeq \LLf(\pi_0(X^z)/G^z),$$
and in particular since loop spaces commuted with fiber products, $X^z \simeq \LL(\pi_0(X^z))$.  
\end{proof}

The unipotent version of equivariant localization for loop spaces implies an equivariant localization result for Hochschild homology and its cyclic variants.  The following appears in the main text as Theorems \ref{ThmHH} and \ref{ThmHC}.
\begin{cor}[Equivariant localization for Hochschild homology]
Let $G$ be reductive group acting on a smooth variety $X$, and $z \in G$ a semisimple element.  Then we have an $S^1$-equivariant equivalence
$$\begin{tikzcd} HH(\Perf(X/G))_{\widehat{z}} \arrow[r, "\simeq"] &  HH(\Perf(\pi_0(X^z)/G^z))_{\widehat{z}}\end{tikzcd}$$
and similarly when replacing $HH$ with its cyclic variants $HC, HN,$ and $HP$.
\end{cor}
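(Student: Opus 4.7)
The plan is to reduce the Hochschild statement to Theorem \ref{MainThmIntro} via the identification $HH(\Perf(X/G)) \simeq \OO(\LL(X/G))$ of $S^1$-equivariant objects recalled in the introduction, and then to bootstrap to the cyclic variants uniformly using $S^1$-equivariance. The first task is to interpret the completion $(-)_{\widehat z}$ geometrically on the loop-space side. Here $HH(\Perf(X/G))$ is naturally a module over $HH(\Perf(BG)) \simeq \OO(\LL BG) \simeq \OO(G/G)$, which is affine over $G//G$ via the Chevalley map. Under the identification with $\OO(\LL(X/G))$, the ideal corresponding to the closed point $[z] \in G//G$ realizes completion at $\widehat z$ as restriction of functions to the formal completion $\LL^u_z(X/G)$; assuming functions commute with this completion (see the obstacle below), I obtain an $S^1$-equivariant identification
$$HH(\Perf(X/G))_{\widehat z} \simeq \OO(\LL^u_z(X/G)),$$
and analogously for $\pi_0(X^z)/G^z$.

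With these identifications in hand, the unipotent version of Theorem \ref{MainThmIntro} supplies an $S^1$-equivariant equivalence $\LL^u_z(\pi_0(X^z)/G^z) \simeq \LL^u_z(X/G)$. Taking global functions is itself an $S^1$-equivariant operation, so pushing through yields
$$HH(\Perf(X/G))_{\widehat z} \simeq HH(\Perf(\pi_0(X^z)/G^z))_{\widehat z}$$
as $S^1$-equivariant objects, which is the Hochschild case. For the cyclic variants, I would invoke that $HC, HN, HP$ are produced from $HH$ by the functorial $S^1$-equivariant constructions of homotopy orbits, homotopy fixed points, and Tate cohomology. Since the above equivalence is $S^1$-equivariant, applying each of these functors directly yields the corresponding equivalence after completion; there is nothing further to check beyond naturality of the module structure.

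The main obstacle is the first step: verifying rigorously that algebraic completion of the $HH(\Perf(BG))$-module $HH(\Perf(X/G))$ at $\widehat z$ agrees with global functions on the formal completion $\LL^u_z(X/G)$ of the loop space. In the derived, stacky setting one cannot simply invoke commutation of global sections with completion; the cleanest path is to observe that the map $\LL(X/G) \to G//G$ is affine (or at least has this affineness property after an étale cover near $[z]$), a statement in the same spirit as the Luna slice arguments referenced in the discussion of Theorem \ref{MainThmIntro}. Once this compatibility is in place the remainder of the argument is formal, since both the identification of $HH$ with functions on loops and the passage between $HH$ and its cyclic variants are entirely functorial $S^1$-equivariant constructions.
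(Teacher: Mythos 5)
Your Hochschild case proceeds essentially as the paper does: both identify $HH(\Perf(X/G))_{\widehat z}$ with $\OO(\LL^u_z(X/G))$ and then transport Theorem \ref{MainThm} across; the ``obstacle'' you flag about commuting completion with global sections is in the paper handled by base change for derived completions along closed embeddings (Lemma \ref{UniB} and the pullback definition of $\LL^u_z(X/G)$), which is close to the affineness observation you sketch.

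There is however a genuine gap in your treatment of the cyclic variants. You write that since the equivalence on $HH$ is $S^1$-equivariant, one can simply apply $(-)_{S^1}$, $(-)^{S^1}$, and $(-)^{\Tate}$ and ``there is nothing further to check.'' But the statement being proven completes the cyclic invariants, not the Hochschild complex; what you actually obtain by applying these functors to your $S^1$-equivariant equivalence of completions is $(\,HH_{\widehat z}\,)^{S^1}$, $(\,HH_{\widehat z}\,)_{S^1}$, $(\,HH_{\widehat z}\,)^{\Tate}$, which are not a priori the same as $(HH^{S^1})_{\widehat z} = HN_{\widehat z}$, $(HH_{S^1})_{\widehat z} = HC_{\widehat z}$, $(HH^{\Tate})_{\widehat z} = HP_{\widehat z}$. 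The paper explicitly warns about this after Theorem \ref{ThmHH}: cyclic homology involves a filtered colimit and periodic cyclic homology involves both a cofiltered limit and a colimit, while derived completion commutes with limits but not with colimits, so one cannot freely interchange. Resolving this interchange is the actual content of Theorem \ref{ThmHC}: the paper passes to the explicit equivariant cyclic bar complex $C_\bullet(A,G)$ of Proposition \ref{GeneratorQuotientStack}, uses that its terms are projective and that the complex is cohomologically bounded above (Remark \ref{EquivariantCyclicProjective}), and invokes Lemmas \ref{TateProd} and \ref{TateLimitColimit} to show that, for the degree-wise Mittag-Leffler limit diagram computing the derived completion of this particular model, the Tate and coinvariant functors do commute with the limit. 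Without some such boundedness and Mittag-Leffler input your argument for $HC$ and $HP$ does not go through.
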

Note that since the formation of periodic cyclic homology involves a colimit, this is not automatic in that case.  The fact that $X$ is smooth gives us a cohomological boundedness of Hochschild homology, which is essential in establishing the above result.

After identifying the completed derived loop spaces over a central character $z$, we are interested in identifying this completion with de Rham cohomology via an analogue of the Atiyah-Segal completion theorem in the setting of periodic cyclic homology.  The following theorem is a consequence of Proposition \ref{StackCoh} and Theorem \ref{FormalUni}.
\begin{theorem}[Atiyah-Segal completion for periodic cyclic homology]
Let $X/G$ be a global quotient stack where $X$ is an algebraic space.  Then, there is an equivalence
$$\begin{tikzcd} HP(\Perf(X/G))_{\widehat{e}} \arrow[r, "\simeq"] & C^\bullet_{dR}(X/G; k) \,\widehat{\otimes}^!_k \,k((u))\end{tikzcd}$$
where $C^\bullet_{dR}(X/G; k)$ denotes the derived de Rham cohomology, and $\widehat{\otimes}^!$ indicates completion with respect to the coarsest topology induced by the derived Hodge filtration and $u$-adic filtration on respective tensor factors.
\end{theorem}

When $k=\C$, applying a generalization of the main theorem of \cite{Bh:DDR} to geometric stacks, we can identify Tate functions on formal loop spaces with Betti cohomology. 
\begin{cor}
Let $X/G$ be a finite type global quotient stack over $k=\C$, where $X$ is an algebraic space.  Then, there is an equivalence
$$\begin{tikzcd} HP(\Perf(X/G))_{\widehat{e}} \arrow[r, "\simeq"] & C^\bullet_{dR}(X^{an}/G^{an}; \C) \,\widehat{\otimes}^! \,\C((u))\end{tikzcd}$$
where $C^\bullet_{dR}(X^{an}/G^{an}; \C)$ denotes de Rham cohomology of the analytification and $\widehat{\otimes}^!$ indicates completion with respect to the coarsest topology induced by the Hodge filtration in the Cartan model for equivariant cohomology and $u$-adic filtration on respective tensor factors.
\end{cor}

The main technical hurdle in the proof of the theorem is that the fiber of $\LL(X/G)$ over $[e] \in G//G$ does not just contain formal loops but also unipotent loops.   In Theorem \ref{FormalUni} we show that this difference vanishes after applying the Tate construction.
\begin{theorem}\label{IntroUniFormal}
For $X$ a quasicompact algebraic space acted on by an affine algebraic group $G$, the pullback functor on derived global functions induces an isomorphism
$$\begin{tikzcd} \OO(\LL^u(X/G))^{\Tate} \arrow[r, "\simeq"]& \OO(\LLf(X/G))^{\Tate}.\end{tikzcd}$$
In particular, if $U$ is a unipotent group, then the pullback functor induces an equivalence
$$\begin{tikzcd}HP(\Perf(X/U)) \arrow[r, "\simeq"] & HP(\Perf(X)).\end{tikzcd}$$
\end{theorem}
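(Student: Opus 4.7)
The plan is to establish the main Tate equivalence by reducing to the universal case $X = \pt$, and then analyzing the difference between the unipotent and formal completions of $BG$ via a $\G_m$-contraction argument. The restriction map $\OO(\LL^u(X/G))^{\Tate} \to \OO(\LLf(X/G))^{\Tate}$ arises from the inclusion of the formal completion at the single point $[e] \in G//G$ into the formal completion at its entire unipotent saturation $\mathcal{U}/G \subset G/G$. Both $\LL^u$ and $\LLf$ are obtained by base change from $\LL(BG) = G/G$; combining smooth descent for the formal loop space (cf.\ \cite{BZN:LC}) with a compatible descent statement for the Tate-localized unipotent loop space, I would reduce the general problem to the absolute comparison $\OO(\LL^u(BG))^{\Tate} \simeq \OO(\LLf(BG))^{\Tate}$.

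For the $BG$ case, use the exponential in characteristic zero to rewrite $\wh{\mathcal{U}}/G \simeq \wh{\N}/G$ and $\wh{G}_e/G \simeq \wh{\g}_0/G$, where $\N \subset \g$ is the nilpotent cone. The scaling $\G_m$-action on $\g$ preserves both $\N$ and $\{0\}$ and contracts the entire Lie algebra onto the origin; on the associated graded of the resulting weight filtration, the restriction map $\OO(\wh{\N}/G) \to \OO(\wh{\g}_0/G)$ is manifestly an isomorphism, since both sides become the same graded algebra of $G$-invariants (governed by Chevalley--Weil generators). The main technical obstacle is to promote this associated-graded equivalence to an actual equivalence after inverting the Bott element $u \in H^2(BS^1)$: the $\G_m$-weight filtration produces a spectral sequence computing each side, and the interaction of this filtration with the $S^1$-rotation must be such that the spectral sequence collapses after applying the Tate construction. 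Smoothness of $X$ enters precisely here, through an explicit Koszul resolution of $\LL(X/G)$ which renders the $S^1$-action and the $\G_m$-weights simultaneously tractable; without smoothness such a resolution is not available and the Tate cancellation breaks down.

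For the \emph{in particular} statement the reduction is then straightforward. If $U$ is unipotent, every element of $U$ is unipotent, so $\LL(X/U) = \LL^u(X/U)$, and the main equivalence yields $HP(\Perf(X/U)) = \OO(\LL(X/U))^{\Tate} \simeq \OO(\LLf(X/U))^{\Tate}$. Since the formal loop space satisfies smooth descent, $\OO(\LLf(X/U))^{\Tate}$ is computed from $\OO(\LLf(X))^{\Tate}$ together with a factor coming from $BU$; applying the main equivalence to the absolute case $X = \pt$, $G = U$ shows $HP(BU) \simeq k(\!(u)\!)$, so the $BU$ factor contributes trivially, giving $HP(\Perf(X/U)) \simeq HP(\Perf(X))$.
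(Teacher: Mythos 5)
Your proposal has a genuine gap, and it also misidentifies what makes the theorem work. The most serious error is your assertion that ``Smoothness of $X$ enters precisely here, through an explicit Koszul resolution\ldots; without smoothness such a resolution is not available and the Tate cancellation breaks down.'' The theorem requires only that $X$ be a quasicompact algebraic space, with no smoothness hypothesis; the paper's proof indeed works in that generality. You have imported the smoothness requirement from the localization theorem (Theorem~\ref{MainThm}, where Koszul resolutions genuinely are used), but that is a different statement. Relatedly, your intended reduction to the case $X = \pt$ via ``smooth descent for the formal loop space'' together with ``a compatible descent statement for the Tate-localized unipotent loop space'' is circular: the formal loop space does satisfy smooth descent, but unipotent loop spaces do not a priori — the failure of loop spaces to be a cosheaf for the smooth topology is exactly the obstruction the paper is negotiating. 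Producing a descent statement for $\OO(\LL^u(X/G))^{\Tate}$ compatible with the known descent for $\OO(\LLf(X/G))^{\Tate}$ would essentially be the content of the theorem, so you cannot assume it to reduce to $BG$.

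The paper's strategy is different and avoids both problems. First (Lemma~\ref{GradedIso}), the contracting $\G_m$-action on $\LL^u(X/G)$ shows that $\OO(\LL^u(X/G)) \to \OO(\LLf(X/G))$ is a \emph{pro-graded} isomorphism: isomorphism on each weight-graded piece, but not a priori an isomorphism of underlying complexes, because infinite products of weight spaces can differ from infinite sums. The real work is then to show the Tate construction collapses this gap. For reductive $G$, the paper identifies $\OO(\LLf(BG))^{\Tate} \simeq \OO(\LL^u(BG))^{\Tate} \simeq k[[\h]]^W(\!(u)\!)$ directly, and observes via the derived Cartan model that the cohomology of $\OO(\LLf(X/G))^{\Tate}$ is finitely generated over $k[[\h]]^W$ by weight-homogeneous generators. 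Finite generation by homogeneous generators then upgrades the pro-graded isomorphism to an actual isomorphism (Lemma~\ref{ProgradedFinite}) — no spectral sequence is needed, just finiteness of the target as a module. For general affine $G$, one embeds $G$ into a reductive group $K$ and replaces $X$ by the algebraic space $X \times^G K$, which is where the hypothesis ``quasicompact algebraic space'' rather than ``scheme'' becomes important. Your final ``in particular'' reduction ($\LL(X/U) = \LL^u(X/U)$ for $U$ unipotent, and $HP(BU) \simeq k(\!(u)\!)$) is correct and matches the paper's corollary, but the main statement cannot be reached by the route you propose.
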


Following an analysis of the twisted $S^1$-rotation action on $\LL(X/G)$ above points of $G/G$ away from the identity in Section \ref{TwistedActionSection}, we relate this completion theorem to our localization theorem in Theorem \ref{HPLoc}.
\begin{theorem}\label{IntroHPLoc}
Let $G$ be a reductive group acting on a smooth quasi-projective variety $X$.  The periodic cyclic homology $\HP(\Perf(X/G))$ is naturally a module over 
$HP(\Perf(BG)) = k[G//G]((u))$.  For a closed point $z \in G//G$, we have an identification of the formal completion at $z$ with 2-periodic Betti cohomology of the $z$-fixed points
$$\begin{tikzcd}HP(\Perf(X/G))_{\widehat{z}} \arrow[r, "\simeq"] & C^\bullet_{dR}(\pi_0(X^z)/G^z; k) \, \widehat{\otimes}_k^!\,  k((u))\end{tikzcd}$$
as a module over $HP(\Perf(BG))_{\widehat{z}} \simeq C_{dR}^\bullet(BG^z; k) \, \widehat{\otimes}_k^!\,  k((u))$, contravariantly functorial with respect to $X$.
\end{theorem}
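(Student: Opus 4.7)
The plan is to combine the equivariant localization theorem (Theorem~\ref{MainThmIntro}) with the Atiyah--Segal style completion theorem for $HP$ at the identity (the combination of Proposition~\ref{StackCoh} and Theorem~\ref{FormalUni}), bridged by a ``shift by $z$'' translation on the $z$-fixed-point stack whose effect on the $S^1$-action is controlled by the analysis in Section~\ref{TwistedActionSection}.

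First I would apply the equivariant localization corollary for Hochschild homology above to obtain a functorial equivalence
$$HP(\Perf(X/G))_{\widehat{z}} \simeq HP(\Perf(\pi_0(X^z)/G^z))_{\widehat{z}},$$
compatible with the natural module structure over $HP(\Perf(BG))_{\widehat{z}}$. This reduces the theorem to the case where the basepoint $z$ has become central, sitting inside the centralizer $G^z$.

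Next, since $z$ is central in $G^z$, multiplication by $z$ defines an automorphism of $\pi_0(X^z)/G^z$ over $BG^z$ whose induced map on loop spaces carries the completion along the semisimple orbit $\{z\}/G^z$ to the completion along $\{e\}/G^z$. This automorphism is not $S^1$-equivariant: it twists the rotation action by the one-parameter subgroup generated by $z$, which is precisely the twisted $S^1$-action treated in Section~\ref{TwistedActionSection}. Combining this translation with the identification of twisted and standard Tate constructions established there, and then applying the completion theorem at $\widehat{e}$ to $\pi_0(X^z)/G^z$, one obtains
$$HP(\Perf(\pi_0(X^z)/G^z))_{\widehat{z}} \simeq H^\bullet((X^z)^{an}/(G^z)^{an}; k) \, \widehat{\otimes_k^!}\, k((u)),$$
with module structure that matches, under the $X = \pt$ specialization of the same argument, the identification $HP(\Perf(BG))_{\widehat{z}} \simeq H^\bullet(B(G^z)^{an}; k) \, \widehat{\otimes_k^!}\, k((u))$.

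The main technical obstacle I anticipate is the non-$S^1$-equivariance of the shift by $z$: one must verify that after passing to the Tate construction, the twisted rotation becomes indistinguishable from the standard rotation on formal loops over $\{e\}/G^z$, so that the completion theorem at the identity can be transported back to a completion at $z$. This is precisely the content of Section~\ref{TwistedActionSection}, and once granted, the remainder of the argument is a matter of threading the localization isomorphism of Theorem~\ref{MainThmIntro} through the translation and the completion at $\widehat{e}$, while tracking the module structures over the central character algebra $HP(\Perf(BG))_{\widehat{z}}$.
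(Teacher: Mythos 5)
Your proposal matches the paper's proof (the body version is Theorem~\ref{HPLoc}) step for step: localize the completed $HP$ to the fixed-point stack $\pi_0(X^z)/G^z$ via Theorem~\ref{ThmHC} (the cyclic variant of Theorem~\ref{ThmHH}, which ultimately rests on the geometric Theorem~\ref{MainThm}); shift by $z$ to move from completion at $\{z\}$ to completion at $\{e\}$, observing that this map is not $S^1$-equivariant but instead intertwines the twisted rotation $\tau_z$ with the standard rotation; invoke Proposition~\ref{CircleActionsSame} to identify the twisted and untwisted Tate constructions; and finally apply Theorem~\ref{FormalUni} plus Proposition~\ref{StackCoh} to identify the result with 2-periodic analytic de Rham cohomology. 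You also correctly single out the non-$S^1$-equivariance of the shift as the pivotal technicality, which is exactly why the paper develops Section~\ref{TwistedActionSection}.
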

Using Corollary \ref{DerivedInv}, we also obtain an identification of the (derived) specialization of periodic cyclic homology at $z \in G$ with non-equivariant cohomology of $z$-fixed points.
\begin{cor}
Let $X$ be a smooth variety with an action of a reductive group $G$.  For $z \in G$ semisimple, let $k_z$ denote the skyscraper sheaf at $[z] \in G//G$.  We have an equivalence
$$\begin{tikzcd} HP(\Perf(X/G)) \otimes_{k[G/G]((u))}^L k_z((u)) \arrow[r, "\simeq"] & C_{dR}^\bullet(\pi_0(X^z); k)((u)).\end{tikzcd}$$
\end{cor}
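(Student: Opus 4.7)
The plan is to deduce this corollary from Theorem \ref{IntroHPLoc} by further specializing from the completion at $[z]$ down to the residue field $k_z$. There are three steps: (i) factor the derived tensor $\otimes^L_{k[G//G]((u))} k_z((u))$ through the completion at $[z]$; (ii) apply Theorem \ref{IntroHPLoc} to rewrite the completion; (iii) perform an Eilenberg--Moore / Koszul base change along the augmentation of $H^\bullet(B(G^z)^{an})$ to recover the Betti cohomology of the fiber $(X^z)^{an}$.

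For step (i), I would use that $k_z$ is supported at the maximal ideal $\mathfrak{m}_z \subset R := k[G//G]((u))$, so the derived base change $\otimes^L_R k_z((u))$ only depends on the formal neighborhood of the module at $[z]$. More concretely, since the completion map $R \to R_{\widehat z}$ is flat and an isomorphism on residue fields at $\mathfrak{m}_z$, the natural map
$$HP(\Perf(X/G)) \otimes^L_{R} k_z((u)) \longrightarrow HP(\Perf(X/G))_{\widehat{z}} \otimes^L_{R_{\widehat z}} k_z((u))$$
is an equivalence. Combined with Theorem \ref{IntroHPLoc}, this rewrites the left-hand side of the corollary as
$$\Bigl(H^\bullet((X^z)^{an}/(G^z)^{an}; k) \, \widehat\otimes^!_k \, k((u))\Bigr) \otimes^L_{H^\bullet(B(G^z)^{an}; k) \widehat\otimes^!_k k((u))} k((u)),$$
where the algebra map to $k((u))$ is induced by the basepoint inclusion $\pt \hookrightarrow B(G^z)^{an}$.

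For step (iii), since $(G^z)^{an}$ is a complex reductive Lie group, the ring $H^\bullet(B(G^z)^{an}; k)$ is a polynomial algebra on finitely many generators in positive even degrees (by Chevalley--Shephard--Todd applied to the Weyl-invariants of the polynomial ring on the Cartan). Its augmentation ideal is therefore generated by a regular sequence, yielding a finite Koszul resolution of $k$; the resulting Eilenberg--Moore spectral sequence for the fiber sequence
$$(X^z)^{an} \longrightarrow (X^z)^{an}/(G^z)^{an} \longrightarrow B(G^z)^{an}$$
converges strongly to $H^\bullet((X^z)^{an}; k)$. Tensoring with $k((u))$ then gives the claim.

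The main obstacle I anticipate is compatibility of the base change in step (iii) with the $u$-completed tensor $\widehat\otimes^!_k k((u))$ appearing in Theorem \ref{IntroHPLoc}: one has to verify that $\widehat\otimes^!_k k((u))$ commutes with base change along the augmentation. This should reduce to flatness of $k((u))$ over $k$ together with finite cohomological dimension of $X^z/G^z$ (since $X$ is smooth and $G^z$ reductive), letting one interchange the $u$-completion with the Eilenberg--Moore tensor and then apply the classical (non-$u$-adic) convergence.
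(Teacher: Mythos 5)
Your proposal is a reasonable algebraic route, and steps (i) and (ii) are sound: the derived tensor $\otimes^L_R k_z((u))$ does factor through the derived $\mathfrak m_z$-completion (by flatness of $R \to R_{\widehat z}$ and the fact that $k_z$ is $\mathfrak m_z$-nilpotent), and Theorem \ref{IntroHPLoc} then rewrites the completed module. However, your route differs from the paper's, and step (iii) has a genuine gap.

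The paper's argument is geometric and proceeds through Corollary \ref{DerivedInv}: after passing through the completion, one uses the chain of identifications in the proof of Theorem \ref{HPLoc} to write $HP(\Perf(X/G))_{\widehat z} \simeq \OO(\LLf(\pi_0(X^z)/G^z))^{\Tate}$ as a module over $\OO(\LLf(BG^z))^{\Tate}$. The further base change to the residue field is then, geometrically, passage to the fiber over the point $\{e\} \to \LLf(BG^z)$, which (via the ``shift by $z$'' equivalence and the derived fixed-point identification of Corollary \ref{DerivedInv}) is precisely $\LL(\pi_0(X^z))$. The scheme-level corollary to Theorem \ref{BhattThm} then gives $\OO(\LL(\pi_0(X^z)))^{\Tate} \simeq C^\bullet((X^z)^{an};k)((u))$. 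Crucially this argument never leaves the cochain/loop-space level, so no formality input is needed.

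Your step (iii), by contrast, performs a derived tensor over the \emph{cohomology ring} $H^\bullet(B(G^z)^{an};k)$. This computes the $E_2$-page $\Tor_{H^\bullet(BG^z)}(H^\bullet_{G^z}(X^z), k)$ of the Eilenberg--Moore spectral sequence, \emph{not} its abutment $H^\bullet((X^z)^{an};k)$: the cochain-level statement is $C^\bullet(F) \simeq C^\bullet(E) \otimes^L_{C^\bullet(B)} k$, and replacing $C^\bullet$ by $H^\bullet$ requires not only formality of $C^\bullet(BG^z)$ (which holds, $H^\bullet(BG^z)$ being polynomial on even generators) but also formality of $C^\bullet(X^z/G^z)$ as a $C^\bullet(BG^z)$-module --- equivalently, degeneration of the Eilenberg--Moore spectral sequence at $E_2$. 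This is a real hypothesis, not a formal consequence of finite Koszul dimension, and you do not address it. Relatedly, the ``$H^\bullet$'' appearing in Theorem \ref{IntroHPLoc} (via Proposition \ref{StackCoh}) should really be read as the Hodge-completed de Rham cochain complex rather than its cohomology; once this is made precise your step (iii) becomes exactly the cochain-level Eilenberg--Moore statement, which is where the degeneration issue is sidestepped --- but as written the argument conflates the two. You correctly flag the $\widehat\otimes^!_k k((u))$ compatibility as a worry; the more fundamental worry is the $H^\bullet$ versus $C^\bullet$ distinction, which the paper's geometric argument via $\LL(\pi_0(X^z))$ avoids entirely.
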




We indicate two natural directions in which our results may be extended.  The first is to ask what happens when $X$ is allowed to be singular; in this case, one can study either the Hochschild homology of $\Coh(X)$ or $\Perf(X)$.  In the case when $X$ is a fiber product of smooth schemes, the geometric statement follows immediately from our results, but it is unclear to us how to generalize beyond that case.  If a geometric statement is out of reach, it is also of interest as to whether the global localization statements for periodic cyclic homology hold; an obstruction to applying the standard techniques of embedding a singular quotient stack into a smooth one is the lack of a devissage theorem for the periodic cyclic homology of stacks.  A second direction would be to categorify these results in a generalization of the Koszul duality of \cite{Pr:IL} and \cite{BZN:LC}.  In addition, it would be pleasing to have a more conceptual proof of Theorem \ref{FormalUni}.

We end our introduction with a few toy examples.
\begin{exmp}
Let $G = \G_m = \Spec k[z,z^{-1}]$ act on $X = \mathbb{A}^1 = \Spec k[x]$ by scaling, i.e. assign $\G_m$-weight $|x| = 1$.  The loop space can be calculated directly via Proposition \ref{LoopQuotient}
$$\LL(\mathbb{A}^1/\G_m) = \frac{\Spec k[z,z^{-1},x]/\langle x(z-1)\rangle}{\G_m}.$$
The Hochschild homology and periodic cyclic homology can also be calculated directly
$$HH(\Perf(\mathbb{A}^1/\G_m)) = (k[z,z^{-1},x]/x(z-1))^{\G_m} = k[z,z^{-1}], \;\;\;\;\;\; HP(\Perf(\mathbb{A}^1/\G_m)) = k[z,z^{-1}]((u))$$
as the $S^1$-equivariant structure on a complex concentrated in a single cohomological degree can only be realized by the zero map.  Completing at any $z_0 \in \G_m$ gives, for $t = z - z_0$ and $|t| = 0$,
$$HP(\Perf(\mathbb{A}^1/\G_m))_{\widehat{z}} \simeq k[[t]]((u))$$
where $t = z - z_0$.

On the other hand, we can compute $H^\bullet((X^z)^{an}; k)((u))$ for each $z_0$.  For $z = z_0 \ne 1$, the fixed points $\pi_0(X^{z_0})/G^{z_0} = \{0\}/\G_m \simeq BS^1$, whose 2-periodic cohomology is $H^\bullet(BS^1; k) \otimes^! k((u)) = k[[s]] \otimes^! k((u)) $ with $|s| = 2$.  For $z = 1$ the fixed points are $\mathbb{A}^1/\G_m \simeq \mathbb{C}/S^1 \simeq BS^1$, and the same argument applies.  The identification $k[[s]]((u)) \simeq k[[t]]((u))$ is by $tu = s$; in particular it is necessary to invert the degree 2 operator $u$.  The discrepancy between the cohomological degrees of $t$ and $s$ is a manifestation of the Koszul duality degree-weight shearing discussed in \cite{BZN:LC}.
\end{exmp}

\begin{exmp}[Flag variety]\label{ExmpFlag}
Let $X = G/B$ be the flag variety with the usual action of $G$.  Then, $X/G = BB$, so $\LL(X/G) = B/B = \widetilde{G}/G$ is the Grothendieck-Springer resolution; the fiber for the map $\widetilde{G} \rightarrow G$ over any point $g \in G$ consists of the Borel subgroups containing $g$, i.e. the $g$-fixed points of $G/B$.  
We identify the Hochschild homology $HH(\Perf(BB))$ as a $HH(\Perf(BG))$-module by the inclusion map
$$HH(\Perf(BB)) = \OO(\tilde{G}/G) = k[H] \leftarrow HH(\Perf(BG)) = \OO(G/G) = k[H]^W$$
where $H$ is the universal Cartan subgroup and $W$ is the universal Weyl group\footnote{It is known that $\OO(\tilde{G})$ has vanishing higher cohomology.}.  Let $s \in G$ be a semisimple element, and $[s]$ its adjoint orbit.  Completing at $[s] \in k[H]^W$, we have
$$HH(\Perf(BB))_{\widehat{[s]}}  = \bigoplus_{|W \cdot s|} k[[\h]] \leftarrow HP(\Perf(BG))_{\widehat{s}} \simeq k[[\h]]^{W_{G^s}}.$$
In particular, the rank of $HP(\Perf(BB))_{\widehat{s}}$ over $HP(\Perf(BG))_{\widehat{s}}$ is $|W \cdot s| \cdot |W_{G^s}| = |W|$ by a theorem of Steinberg and Pittie \cite{St}.  
Note that $\h$ is placed in cohomological degree zero.  Applying the Tate construction, we find that
$$HP(\Perf(BB))_{\widehat{[s]}}   \simeq k[[\h]]^{W_{G^s}}((u)).$$

On the other hand, the fixed points $(G/B)^s$ consist of Borel subgroups containing $s$; by conjugating, we can choose a torus such that $s \in T \subset B$; let $\mf{t}$ be the Lie algebra of $T$.  There is a $G^s$-action on $(G/B)^s$ and its stabilizer at every point is conjugate to $B^s$, but the action may not be transitive; thus, $(G/B)^s$ is the disjoint union of copies of $G^s/B^s$.  To count the number of connected components, we count $T$-fixed points: the $T$-fixed points of $G/B$ are also $s$-fixed points, and furthermore each $G^s/B^s$ contains $|W_{G^s}|$ such $T$-fixed points, so we have $|W|/|W_{G^s}|$ connected components.  Finally, accounting for $T$-equivariance, we have
$$H^\bullet(\coprod_{|W|/|W \cdot s|} B(B^s)^{an}; k) = \bigoplus_{|W|/|W^{G^s}|} k[[\h]] \leftarrow H^\bullet(B(G^s)^{an}; k) = k[[\mf{t}]]^{W_{G^s}}$$
where $\mf{t}$ is placed in cohomological degree 2.  In particular, 
$$HP(\Perf(BB))_{\widehat{[s]}} \simeq k[[\h]]^{W_{G^s}}((u)) \simeq k[[\mf{t}]]^{W_{G^s}}((u)) \simeq H^\bullet((BB^s)^{an}; k) \otimes^! k((u))$$
under the isomorphism $u\h \simeq \mf{t}$.
\end{exmp}

\subsection{Conventions and notation}

In this note, $k$ will denote an algebraically closed field of characteristic zero, and we work over $\pt = \Spec(k)$.  Unless otherwise stated, all functors and categories are derived, e.g. for an affine scheme $X = \Spec(A)$, we denote by $\QCoh(X)$ the derived category of unbounded complexes of $A$-modules localized with respect to quasi-isomorphisms, and $\otimes = \otimes^L$ although we sometimes use the latter notation for emphasis (e.g. when performing calculations).  We indicate a functor that is not derived by writing $\pi_0$ or $H^0$.

All gradings follow cohomological grading conventions (i.e. differentials increase degree), unless otherwise indicated by a subscript, and $HH$ will always denote the cochain complex of Hochschild chains rather than its cohomology groups, which we denote $H^\bullet(HH)$.  We refer to the $n$th cohomology group of a chain complex $V$ by $H^n(V) = \pi_{-n}(V)$.

We require a theory of $\infty$-categories and derived algebraic geometry.  Such theory has been developed by by To\"{e}n and Vezzosi in \cite{TV:HAG1} \cite{TV:HAG2} and by Lurie in \cite{Lu:HA} \cite{Lu:HTT} \cite{Lu:SAG} \cite{Lu:DAGIV} \cite{Lu:DAGVII} \cite{Lu:DAGVIII} \cite{Lu:DAGXII}.  Below, we summarize some of the main definitions.
\begin{rmk}[$\infty$-categories]
By \emph{$\infty$-category} we mean an $(\infty, 1)$-category, and we do not specify a particular model\footnote{A forthcoming book by Riehl and Verity \cite{RV:E} establishes the model-independence of $(\infty, 1)$-categories and its foundational properties, constructions, and theorems.}.  We let $\cat{S}$ denote the $\infty$-category of \emph{$\infty$-groupoids} or \emph{spaces} and we will take for granted that the category of $\infty$-categories is enriched in $\cat{S}$.  We let $\cat{st}_k$ denote the $\infty$-category of small (stable) $k$-linear $\infty$-categories whose 1-morphisms are $k$-linear exact functors, and $\cat{Pr}_k^L$ the category of presentable (stable) $k$-linear $\infty$-categories whose 1-morphisms are functors which are $k$-linear exact left adjoints\footnote{In particular, by Remark 6.5 in \cite{Lu:DAGVII}, $k$-linear presentable categories are automatically stable.  By the adjoint functor theorem, left adjoint functors commute with filtered colimits.}.  Note that presentable $\infty$-categories admit a combinatorial model structure.

For such a category $\cat{C} \in \cat{Pr}_k^L$, we let $\cat{C}^{\omega} \in \cat{st}_k$ denote its compact objects.  For $\cat{C} \in \cat{st}_k$, we let $\Ind(\cat{C}) \in \cat{Pr}^L_k$ denote its ind-completion.  By \cite{Co:DG}, a presentable $k$-linear $\infty$-category in $\cat{Pr}^L_k$ has an associated $k$-linear differential graded category in $\cat{dgcat}_k$.  We will denote by $\Fun^L_k(-, -)$ and $\Fun^R_k(-,-)$ the spaces of $k$-linear exact functors which are left and right adjoints respectively.  For more details, see Chapter 5 of \cite{Lu:HTT}, Section 1.4.4 in \cite{Lu:HA}, and Section 6 of \cite{Lu:DAGVII}.
\end{rmk}

\begin{rmk}[Derived stacks]
We let $\cat{DRng}$ denote the $\infty$-category of \emph{derived rings} (or derived algebras over $k$); during our exposition we do not insist on a particular model, but we will always compute in the category of dg algebras over $k$ with its projective model structure.  The opposite category $\cat{Aff} = \cat{DRng}^{op}$ is defined to be the category of \emph{affine derived schemes}.  A \emph{derived scheme} is as a derived locally ringed space whose 0-truncation is a scheme and whose higher homotopy groups are quasicoherent \cite{To:DAG} \cite{Lu:SAG}.  The global sections functor and derived spectrum functors induce equivalences identifying $\cat{Aff}$ with the category of derived schemes whose $\pi_0$ is affine in the classical sense.  We will refer to derived schemes as simply \emph{schemes}, and use the term \emph{classical scheme} to refer to a derived scheme $X$ for which $\pi_0(X) = X$.  

A \emph{dg scheme} \cite{CFK:DQ} or \emph{embeddable derived scheme} is defined somewhat differently; it is defined to be a scheme $(X, \OO_X)$ along with a non-positively graded sheaf of complexes $\OO_X^\bullet$ such that $\OO_X^0 = \OO_X$ and $H^n(\OO_X^\bullet)$ are quasicoherent.  In particular, a dg scheme $Z = (X, \OO_X^\bullet)$ admits an embedding $Z \rightarrow X$ into the classical scheme $X = (X, \OO_X)$.  Every derived scheme is locally modeled by a dg scheme.

A \emph{prestack} is an $\infty$-functor $\cat{Aff}^{op} := \cat{DRng} \rightarrow \cat{S}$, and a (derived) \emph{stack} is a prestack which is a sheaf for the derived \'{e}tale topology \cite{GR:DAG} \cite{TV:HAG2}.  We mean (derived) \emph{algebraic stack} in the sense of \cite{DG:QCA}: an (derived) Artin 1-stack whose diagonal is quasi-separated, quasi-compact, and representable by (derived) schemes and admits an atlas by a (derived) scheme.  We mean \emph{geometric stack} in the sense of \cite{BZN:LC}: an algebraic stack whose diagonal map is affine.  We say an algebraic stack is \emph{quasi-compact} if it admits a quasi-compact atlas $U$ (equivalently, if it admits an affine atlas).  A map of prestacks $X \rightarrow Y$ is \emph{schematic} if for any scheme $S$ and map $S \rightarrow Y$, the base change $X \times_Y S$ to $S$ is a scheme.  A map of derived schemes $f: X \rightarrow Y$ is a \emph{closed immersion} if it is in the classical sense on $\pi_0$; a map of algebraic stacks is a \emph{closed immersion} if it is after base change to an atlas.
\end{rmk}

\subsection{Acknowledgements}

I would like to thank David Nadler for our numerous conversations regarding this topic and for suggestions leading to and during the writing of this paper.  I would also like to thank Daniel Halpern-Leistner for suggestions regarding the equivariant cyclic bar complex, David Ben-Zvi for suggestions regarding twisted circle actions and Bhargav Bhatt for explaining how his results in \cite{Bh:DDR} extend to the case of stacks.  Finally, I would like to thank Shishir Agrawal for numerous helpful discussions, and Brian Hwang for his comments on this paper.  This work was mostly carried out at UC Berkeley, in part at Cornell University, and was partially supported by NSF RTG grant DMS-1646385.

\section{Background}\label{BGSection}

In this section we provide some basic exposition on Hochschild homology, loop spaces and derived algebraic geometry.  At parts it is an informal summary of the existing literature, and at parts we provide proofs of some folklore likely known to experts.

\subsection{Derived loop spaces and its variants}\label{LoopSpaceSection}

An in-depth discussion of derived loop spaces, which we often simply refer to as loop spaces, can be found in \cite{BZN:LC}.  We will summarize the main definitions and prove some foundational results in the case of the loop space of an algebraic or geometric stack.  These should probably be skipped on a first reading; the essential statements for the main body of the paper are in Propositions \ref{FormalLoopsQuotient} and \ref{UnipotentLoopsQuotient}, which provide an explicit description of the formal and unipotent loops of a global quotient stack.

\begin{defn}
We consider the higher derived stack $S^1$ as the locally constant sheaf on $\cat{Aff}$ with value the topological circle $S^1$.  Its affinization is the shifted affine line $B\G_a = \Spec C^\bullet(S^1; k)$ and the map $S^1 = B\mathbb{Z} \rightarrow B\G_a$ is induced by the map of abelian groups $\mathbb{Z} \rightarrow \G_a$.   \end{defn}

\begin{rmk}
The stack $B\G_a$ is not an affine scheme since $C^\bullet(BS^1; k)$ is not connective, but it still has a well-defined functor of points; it is an example of a \emph{coaffine stack} (in the language of \cite{Lu:DAGVIII}) or an \emph{affine stack} (in the language of \cite{To:AS}).  Explicitly, by Lemma 2.2.5 in \cite{To:AS} or the introduction to Section 4 of \cite{Lu:DAGVIII}, it is the right Kan extension\footnote{That is, the (fully faithful) left adjoint to the restriction of a prestack (i.e. a functor $\cat{DRng} \rightarrow \cat{S}$) to a classical prestack (i.e. a functor $\cat{Rng} \rightarrow \cat{S}$).} of the classical stack\footnote{In fact, coaffine stacks are always left Kan extensions of classical stacks.} sending an affine scheme $S = \Spec(R)$ to the Eilenberg-Maclane space $K(R, 1)$ where $R$ is considered as an abelian group under addition.  The affinization map $S^1 \rightarrow B\G_a$ is given on $S$-points by the map of Eilenberg-Maclane spaces $K(1, \mathbb{Z}) \rightarrow K(1, S)$ where we consider $S$ as an abelian group under addition.  We fix an isomorphism $C^\bullet(S^1; k) \simeq k[\eta]$ where $|\eta| = 1$.

\end{rmk}

\begin{defn}\label{DefnLoopSpace}
Let $X$ be a prestack.  We define the derived loop space and its variants as follows.
\begin{itemize}
\item The \emph{(derived) loop space} of $X$ is the derived mapping stack 
$$\LL(X) := \Map(S^1, X) \simeq X \times_{X \times X} X.$$ 
The second presentation is a consequence of the presentation of $S^1$ via the homotopy pushout $S^1 \simeq \pt \coprod_{S^0} \pt = \Sigma S^0$, and the property that derived mapping stacks take coproducts in the source to products.  The evaluation map $p: \LL X \rightarrow X $ realizes the loop space as a relative group stack over $X$.  The derived loop space has a canonical $S^1$-action by loop rotation.
\item The \emph{formal loop space} $\LLf(X)$ is the completion of $\LL(X)$ along constant loops $X \rightarrow \LL X$.  It inherits a loop rotation $S^1$-action from $\LL X$.
\item The \emph{unipotent loop space} $\LL^u(X)$ is the derived mapping stack
$$\LL^u(X) := \Map(B\G_a, X)$$
and the affinization map $S^1 \rightarrow B\G_a$ defines a map $\LL^u X \rightarrow \LL X$.  There is a natural $B\G_a \rtimes \G_m$-action on $\LL^u X$ arising from the natural $\G_m$-action on $B\G_a$, compatible with the $S^1$-action on $\LL X$.
\item If $X$ admits a cotangent complex, we define the \emph{odd tangent bundle}, a linearized form of the loop space, by
$$\mathbb{T}_X[-1] := \Spec_X \Sym_X \mathbb{L}_X[1]$$
i.e. the relative spectrum of the derived symmetric powers\footnote{We define the relative spectrum as follows: for an algebra object $\mathcal{A} \in \QCoh(X)$, we define the $S$-points for $\Spec_X \mathcal{A}$ as pairs $(\eta, \delta)$ where $\eta \in X(S)$ and $\delta: S \rightarrow \Spec \eta^*\mathcal{A}$ which are compatible under the projection; note that $\eta^*\mathcal{A}$ is an algebra since $S$ is an affine derived scheme and pullback preserves the monoidal structure on quasicoherent sheaves.  The symmetric algebra functor $\Sym_X$ is left adjoint to the forgetful functor from the category of augmented commutative unital associative algebra objects of $\QCoh(X)$, which exists by the adjoint functor theorem.} of the cotangent complex.  There is a projection $q: \mathbb{T}_X[-1] \rightarrow X$ and a zero section $c: X \rightarrow \mathbb{T}_X[-1]$ induced by the structure and augmentation maps respectively.  We write $\widehat{\mathbb{T}}_X[-1]$ for the odd tangent bundle completed at its zero section.  Both $\mathbb{T}_X[-1]$ and $\widehat{\mathbb{T}}_X[-1]$ are equipped with a natural $B\G_a \rtimes \G_m$-action, where the $B\G_a$-action is encoded by the de Rham differential and the $\G_m$-action is by scaling on the fibers.
\end{itemize} 
\end{defn}

\begin{defn}
	Let $X$ be a quasicompact geometric stack.  There is an \emph{exponential map} 
	$$\exp: \widehat{\mathbb{T}}_X[-1] \rightarrow \LLf X$$
	defined in Section 6 of \cite{BZN:LC}.  In particular, $\LLf X$ has a natural $B\G_a \rtimes \G_m$-action compatible with the $S^1$-action.
\end{defn}

\begin{thm}[Hochschild-Kostant-Rosenberg]
The exponential map is an equivalence.
\end{thm}
\begin{proof}
For the stacky case, see Section 6 of \cite{BZN:LC}; when $X$ is a derived scheme, see the main theorem of \cite{BF}.
\end{proof}

\begin{exmp}
If $X = \Spec(A)$, then the derived loop space 
$$\LL(X) = \Spec(A \otimes_{A \otimes_k A^{op}} A) = \Spec(C^\bullet(A; A))$$
is the derived spectrum of the cyclic bar complex equipped with the shuffle product.  The rotation $S^1$-action has a combinatorial realization via the cyclic structure on the cyclic bar complex \cite{Lo:CH} \cite{Jo:CH}.  In this example, we think of the bar resolution $B^\bullet(A) \rightarrow A$ as the $A \otimes A^{op}$-module obtained by tensoring $A$ with the map of simplicial complexes $I \rightarrow \pt$, where the unit interval $I$ is presented by a simplicial set with two 0-simplices and one non-degenerate 1-simplex:
$$B^\bullet(A) = A \otimes I \rightarrow A \;\;\;\;\;\;\;\;\;\; X \simeq \Map_{DSt}(I, X) = \Spec(A \otimes I).$$
The cyclic bar complex $C^\bullet(A) = B^\bullet(A) \otimes_{A \otimes A^{op}} A$ is obtained by gluing the two 0-simplices of $I$, i.e. it is the chain complex associated to the tensor product of $A$ with the presentation of $S^1$ by one 0-simplex and one non-degenerate 1-simplex:
$$C^\bullet(A) = A \otimes S^1 \;\;\;\;\;\;\;\;\;\; \LL(X) = \Map_{DSt}(S^1, \Spec(A)) = \Spec(A \otimes S^1).$$ 
This makes $\LL X$ into a cocyclic scheme and $\OO(\LL X)$ into a cyclic algebra.
\end{exmp}

\begin{exmp}
If $X$ is a stack, then $\pi_0(\LL X)$ is the (classical) inertia stack of $X$, so $\LL X$ can be thought of as a derived inertia stack.  In particular, let $X = BG$; then $\LL(BG) = G/G$ is the stacky adjoint quotient (see Proposition \ref{LoopQuotient} below).  Note that $\LL(BG) = BG \times_{BG \times BG} BG$ is classical since the diagonal map is flat.  The $S^1$-equivariant structure on $\OO(G/G)$ has a description in terms of the a cyclic algebra arising from the cyclic structure on the simplicial Cech nerve for the atlas $G \rightarrow G/G$ (see Section 7.3.3 of \cite{Lo:CH}).
\end{exmp}

\begin{prop}[Loop space of a quotient stack]\label{LoopQuotient}
	The loop space of a quotient stack $\LL(X/G)$ can be computed by the $G$-equivariant fiber product
	$$\begin{tikzcd}
	\LL(X/G) \arrow[r] \arrow[d] & (X \times G)/G \arrow[d, "a \times p"] \\
	X/G \arrow[r, "\Delta"] & (X \times X)/G
	\end{tikzcd}$$
	where $G$ acts on $X \times X$ and $X \times G$ diagonally.
\end{prop}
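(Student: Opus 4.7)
The plan is to use the definition $\LL(X/G) = \Map(S^1, X/G) \simeq (X/G) \times_{(X/G)^2} (X/G)$ coming from the presentation $S^1 \simeq \pt \sqcup_{S^0} \pt$, and to identify the desired fiber product via pasting of cartesian squares. The first observation is that the diagonal $\Delta: X/G \to (X/G)^2$ factors canonically through $(X \times X)/G$, since the underlying diagonal $X \to X \times X$ is $G$-equivariant for the diagonal action; equivalently, $(X \times X)/G$ coincides with the fiber product $(X/G) \times_{BG} (X/G)$, and the map to $(X/G)^2$ is the natural forgetful map.

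The heart of the argument is to verify that the square
\[
\begin{tikzcd}
(X \times G)/G \arrow[r, "a \times p"] \arrow[d] & (X \times X)/G \arrow[d] \\
X/G \arrow[r, "\Delta"] & X/G \times X/G
\end{tikzcd}
\]
is cartesian, where the left vertical map is the projection $(x,g) \mapsto x$. This is checked via the functor of points: an $S$-point of the right-hand pullback consists of a $G$-torsor $Q$ on $S$, two $G$-equivariant maps $\phi_1, \phi_2: Q \to X$, and $2$-cells identifying both $(Q, \phi_i)$ with a common point $(P, \psi) \in X/G(S)$. After normalizing $P = Q$ and $\psi = \phi_1$ using the first $2$-cell, the remaining data is an automorphism $\tilde\alpha \in \Aut_G(Q)$ and a homotopy $\phi_2 \sim \tilde\alpha \cdot \phi_1$. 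Via the standard identification of $\Aut_G(Q)$ with $G$-equivariant maps $Q \to G_{\text{conj}}$, this is exactly the data of an $S$-point of $(X \times G)/G$.

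Pasting cartesian squares then yields
\[
\LL(X/G) \simeq (X/G) \times_{(X/G)^2} (X/G) \simeq (X/G) \times_{(X \times X)/G} \bigl[(X \times X)/G \times_{(X/G)^2} (X/G)\bigr] \simeq (X/G) \times_{(X \times X)/G} (X \times G)/G,
\]
where the first step uses that $\Delta$ factors through $(X \times X)/G$ and the second is the cartesian square above.

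The main obstacle is the careful bookkeeping of $2$-categorical data, in particular tracking the correspondence between automorphisms of $G$-torsors and $G$-equivariant maps to $G_{\text{conj}}$; this correspondence is the geometric incarnation of the description of loops in $X/G$ as pairs $(x, g)$ with $g \cdot x = x$ modulo the twisted conjugation $G$-action. Conventions regarding left versus right torsor actions do not affect the statement but must be chosen consistently throughout the verification.
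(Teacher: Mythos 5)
Your argument is correct and takes essentially the same route as the paper: factor the diagonal $X/G \to (X/G)^2$ through $(X\times X)/G$, identify the pullback $(X\times X)/G \times_{(X/G)^2} (X/G)$ with $(X\times G)/G$, and paste cartesian squares (this is precisely the paper's two-out-of-three argument, with the identifications $(X\times G)/(G\times G)\simeq X/G$ and $(X\times X)/(G\times G)\simeq (X/G)^2$ made explicit). The only presentational difference is that you check the key square on functor of points via torsors, while the paper manipulates the $G\times G$-action on $X\times G$ and $X\times X$ directly; both are fine.
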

\begin{proof}
	Note that $X/G \times X/G \simeq (X \times X)/(G \times G)$ with action $(g_1, g_2) \cdot (x_1, x_2) = (g_1 x_1, g_2 x_2)$.  We write 
	$$\frac{X}{G} \times_{\frac{X\times X}{G \times G}} \frac{X}{G} = \frac{X \times G}{G \times G} \times_{\frac{X \times X}{G \times G}} \frac{X \times G}{G \times G}$$
	where the map $X \times G \rightarrow X \times X$ sends $(x, g) \mapsto (x, gx)$ and the action of $G \times G$ on $X \times G$ is $(g_1, g_2) \cdot (x, g) = (g_1 x, g_2gg_1^{-1})$.  
	The claim follows from the ``two-out-of-three'' lemma for pullback squares applied to
	$$\begin{tikzcd}
	\LL(X/G) \arrow[r] \arrow[d] & (X \times G)/G \arrow[r] \arrow[d] & (X \times G)/(G \times G) \arrow[d] \\
	X/G \arrow[r] & (X \times X)/G \arrow[r] & (X \times X)/(G \times G).
	\end{tikzcd}$$
\end{proof}

\begin{rmk}
	Forgetting $G$-equivariance, the geometric points of the loop space $\LL(X/G) \times_{BG} \pt$ are given by
	$$(\LL(X/G) \times_{BG} \pt)(k) = \{(x, g) \in X(k) \times G(k) \mid g \cdot x = x\}.$$
	The geometric fiber of the map $\LL(X/G) \rightarrow \LL(BG) = G/G$ over $g \in G(k)$ is the fixed points $X^g(k)$.  The geometric fiber of the evaluation map $\LL(X/G) \rightarrow X/G$ over $x \in X(k)$ is the stabilizer of $x$ in $G(k)$.
\end{rmk}

\begin{exmp}[Odd tangent bundle of smooth quotient stacks]\label{OddTangentBundle}
	In the case of $X/G$ where $X$ is smooth, we have that 
	$$\mathbb{L}_{X/G} = \left( \Omega^1_X \rightarrow \g^* \otimes \OO_X \right)$$
	where the internal differential $d$ is the Cartan differential:
	$$ \Sym^n(\mathbb{L}_{X/G}[1]) = \Sym^n( \Omega^1_X \rightarrow \g^* \otimes \OO_X) \simeq (\Omega^{n}_X \rightarrow \g^* \otimes \Omega^{n-1}_X \rightarrow \cdots \rightarrow  \Sym^n(\g^*) \otimes \OO_X),$$
	$$p_* \OO_{\widehat{\mathbb{T}}_{X/G}[-1]} = \left( \lim_k \bigoplus_{i \geq k} \frac{\Sym^\bullet \g^*}{\Sym^{\geq k} \g^*}  \otimes \Omega^{i}_X[i], d\right).$$
	The resulting de Rham complex is called the \emph{Cartan model} for equivariant cohomology; see Proposition 4.12 of \cite{HLP:EH} for more discussion.  This example can also be carried out when $X$ is not smooth, replacing $\Omega^1_X$ with $\mathbb{L}_X$.
\end{exmp}

\subsubsection{Loop spaces of algebraic and geometric stacks}

We prove some technical facts which may be skipped on a first reading.  Note that a quasi-compact geometric stack is automatically QCA in the sense of \cite{DG:QCA}.

\begin{rmk}
The following principles are standard and will be used frequently.  If $X$ is an algebraic stack then $X$ admits a cover by a disjoint union of affine schemes; if $X$ is quasi-compact this disjoint union can be taken to be finite, so that $X$ admits a cover by an affine scheme.  If $X$ is geometric (i.e. has affine diagonal), then 
$$S \times_X T = (S \times T) \times_{X \times X} X$$
is affine for any affine schemes $S, T$.
\end{rmk}

\begin{lemma}\label{LoopsGeometric}
	Let $X$ be an algebraic stack.  Then $\LL X$ is an algebraic stack.  If $X$ is geometric, then $\LL X$ is geometric. If $X$ is geometric and quasi-compact, then so is $\LL X$.
\end{lemma}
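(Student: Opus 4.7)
The plan is to exploit the fiber product presentation $\LL X \simeq X \times_{X \times X} X$ and propagate the properties of $X$ through base change. The key observation is that either projection $p : \LL X \to X$ is obtained as the base change of the diagonal $\Delta_X : X \to X \times X$ along itself, so $p$ inherits whatever representability, quasi-compactness, quasi-separatedness and affineness properties $\Delta_X$ possesses.

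For the algebraic case I would argue as follows. By hypothesis $\Delta_X$ is representable by schemes, quasi-compact and quasi-separated, so the same is true of $p$. Pick a smooth surjection $U \to X$ with $U$ a disjoint union of affine schemes; the base change $U \times_X \LL X$ is then a scheme by representability of $p$, and the induced map $U \times_X \LL X \to \LL X$ is a smooth surjection by base change of $U \to X$. Combined with the standard fact that diagonals of fiber products of algebraic stacks are again representable, quasi-compact and quasi-separated (itself a routine application of base change), this exhibits $\LL X$ as an algebraic stack.

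For the geometric case, the assumption that $X$ has affine diagonal promotes $p$ to an affine morphism. To upgrade this to affineness of $\Delta_{\LL X}$, I would factor
$$\Delta_{\LL X} : \LL X \xrightarrow{\Delta_{\LL X / X}} \LL X \times_X \LL X \longrightarrow \LL X \times \LL X,$$
where the first map is the relative diagonal of the affine morphism $p$ (hence a closed immersion, in particular affine) and the second is the base change of $\Delta_X$ along $\LL X \times \LL X \to X \times X$ (hence affine). The composition is therefore affine, so $\LL X$ is geometric. Finally, for quasi-compactness, an affine atlas $U \to X$ pulls back along the affine morphism $p$ to the affine scheme $U \times_X \LL X$, which provides a smooth surjective atlas of $\LL X$ by a single affine scheme.

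I do not anticipate a genuine obstacle; the argument is entirely formal, and the only point requiring care — more bookkeeping than difficulty — is checking that the stability properties of affine morphisms and closed immersions under composition, base change, and relative diagonal continue to hold in the derived stack framework of \cite{GR:DAG} and \cite{DG:QCA}, which is standard.
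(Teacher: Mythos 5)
Your proof is correct and follows essentially the same route as the paper: both exploit the presentation $\LL X \simeq X \times_{X \times X} X$ and the fact that $p: \LL X \to X$ is a base change of $\Delta_X$, then take the atlas $U \times_X \LL X$. The only cosmetic difference is that you verify geometricity by factoring $\Delta_{\LL X}$ through the relative diagonal of $p$, whereas the paper directly computes the self-fiber-product of the atlas and shows it is affine; these are two standard and interchangeable ways of checking the diagonal is affine.
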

\begin{proof}
	Assume $X$ is algebraic.  That $\LL X$ is algebraic follows from the fact that $\LL X = \Map(S^1, X)$ is a finite limit, and any finite limit of algebraic stacks is algebraic.  An algebraic stack $X$ is geometric if and only if for any map from an affine $U \rightarrow X$, the stack $U \times_X U$ is an affine scheme.		
 In particular, $U \times_{X \times X} X$ is a cover for $\LL X$, and we have
	$$(U \times_{X \times X} X) \times_{\LL X} (U \times_{X \times X} X) = U \times_X (\LL X \times_{\LL X} (U \times_{X \times X} X)) = (U \times_X U) \times_{X \times X} X$$
	which is affine since $U \times_X U$ is affine and the diagonal map is affine, so $\LL X$ is geometric.  Assume $X$ is also quasi-compact; then it admits a cover by an affine $U$, and $U \times_{X \times X} X$ is also affine since the diagonal is affine.
\end{proof}

\begin{lemma}\label{FormalLoopsSchematic}
	Let $X$ be an algebraic stack.  Then the inclusion of constant loops $X \rightarrow \LL(X)$ is a (schematic) closed immersion.
\end{lemma}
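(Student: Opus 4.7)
The plan is to recognize the inclusion $c: X \to \LL X$ as the relative diagonal of the diagonal morphism $\Delta: X \to X \times X$. Since $\LL X = X \times_{X \times X} X$ with both structure maps equal to $\Delta$, the map sending a point of $X$ to its constant loop is precisely $\Delta_{\Delta}: X \to X \times_{X \times X} X = \LL X$, induced by the pair $(\id, \id)$ together with the trivial homotopy $\Delta \circ \id \simeq \Delta \circ \id$. This identification reduces the lemma to verifying that the relative diagonal of $\Delta$ is a schematic closed immersion.

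For schematicness I invoke the convention of the paper: $X$ algebraic implies that $\Delta$ is representable by schemes. The relative diagonal of a scheme-representable morphism is again scheme-representable, so $c$ is automatically schematic. Concretely, given an affine scheme $S$ and a map $S \to \LL X$ encoded by two maps $s_1, s_2: S \to X$ together with a 2-cell $\Delta s_1 \simeq \Delta s_2$, the fiber product $X \times_{\LL X} S$ can be expressed as a further pullback of schemes over $X \times X$ using the scheme-representable $\Delta$, and is therefore a scheme.

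For the closed immersion property I test after base change: pulling back along a smooth atlas $V \to X \times X$ by an affine scheme, $\Delta$ becomes a morphism of schemes $X \times_{X \times X} V \to V$, and the base change of $c$ becomes its diagonal. The diagonal of a morphism of schemes is a closed immersion as soon as the morphism is separated; here this holds because the geometric fibers of the base-changed $\Delta$ are the automorphism group schemes of geometric points of $X$, which for any algebraic stack are separated algebraic groups (indeed affine under QCA conventions).

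The main technical point is the separatedness of $\Delta$, encoding separatedness of the fiberwise automorphism group schemes. This is a standard consequence of the algebraic-stack hypotheses in the paper and does not require any additional input beyond the representability statement already invoked; the derived structure plays no role in this argument since closed immersions of derived stacks are detected on the classical truncation, which recovers the identity section $\pi_0 X \to I_{\pi_0 X}$ of the classical inertia stack.
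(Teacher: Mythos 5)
The reinterpretation of the constant-loop inclusion $c\colon X\to\LL X$ as the relative diagonal $\Delta_{\Delta_X}\colon X\to X\times_{X\times X}X$ is a clean and genuinely different starting point from the paper, which instead builds an explicit atlas $U\times_{X\times X}X\to\LL X$ from the evaluation map and argues that the base-changed $c$ is the scheme map $U\to U\times_{X\times X}X$, which admits a retraction. Your reduction to showing that the base change $X_V := X\times_{X\times X}V\to V$ is a \emph{separated} morphism of schemes is also the right way to phrase the problem, and the final remark that closed immersions are detected on classical truncations is correct.

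The gap is in the separatedness step. You assert that $X_V\to V$ is separated because its geometric fibers are automorphism group schemes of points of $X$, which are separated (even affine) algebraic groups over a field. But separatedness of a morphism of schemes cannot be read off from separatedness of its geometric fibers: there are group schemes over a base whose fibers are all separated algebraic groups over fields (as any finite-type group scheme over a field is separated) but which are not themselves separated over the base. In fact, separatedness of the relative inertia $I_X\to X$ is \emph{equivalent} to the unit section being a closed immersion — i.e.\ to the lemma you are trying to prove — so the fiberwise argument is circular. The paper's own conventions only require the diagonal of an algebraic stack to be \emph{quasi}-separated, which is strictly weaker, so you cannot invoke ``algebraic-stack hypotheses'' to conclude. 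What does make the argument go through is assuming $X$ is a geometric stack (affine diagonal), since affine morphisms are separated; this is also what is needed to fully justify the paper's own retraction argument (the bare claim ``any map of derived schemes admitting a retract is a closed embedding'' fails in general — the open inclusion of $\mathbb{A}^1$ into the affine line with doubled origin admits a retract — and the ``affine-locally on $Y$'' reduction in that proof loses the retraction). You should either add a separatedness hypothesis on $\Delta_X$ or restrict to geometric stacks, and then your route is sound and arguably cleaner than the paper's.
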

\begin{proof}
	Since the diagonal map $X \rightarrow X \times X$ is representable by schemes, so is the evaluation map $\LL(X) \rightarrow X$.  Let $U \rightarrow X$ be an atlas for $X$ with $U$ a scheme; its base change along the evaluation map gives a cover by a scheme $U \times_{X \times X} X \rightarrow \LL X$.  In particular, by the two-out-of-three property of Cartesian squares, the left square is Cartesian
	$$\begin{tikzcd}
	U \arrow[r] \arrow[d] & U \times_{X \times X} X \arrow[d] \arrow[r] & U \arrow[d] \\
	X \arrow[r] & \LL X \arrow[r] & X
	\end{tikzcd}$$
	i.e. the base change of the inclusion of constant loops $X \rightarrow \LL X$ along an atlas is a scheme, so it is schematic.  It is a closed embedding since any map of derived schemes which admits a retract is a closed embedding, and $U \rightarrow U \times_{X \times X} X$ admits a retract by universal property.
	
	We provide a proof for the last claim.  It suffices to assume all schemes are classical, since the the property of being a closed immersion depends only on classical schemes and the property of admitting a retract is preserved by $\pi_0$.  Let $f: Z \rightarrow Y$ be a map of schemes admitting a retract.  We can verify that $f$ is a closed immersion affine locally on $Y$, so assume $Y$ is affine.  It is a closed immersion if $f^\sharp: \OO_Y \rightarrow f_* \OO_Z$ is surjective.  Since $Y$ is affine, this is equivalent to $\OO(Y) \rightarrow \OO(Z)$ being surjective, which follows since the composition on global functions $\OO(Z) \rightarrow \OO(Y) \rightarrow \OO(Z)$ is the identity.
\end{proof}

We now introduce the notion of based loops of a stack.  Namely, given a point of a stack, the based loop space is the group of automorphisms of that point and the unipotent based loops consist of the unipotent automorphisms.  We use these characterizations in Propositions \ref{FormalLoopsQuotient} and \ref{UnipotentLoopsQuotient} to give explicit descriptions of the formal and unipotent loop spaces of quotient stacks.

\begin{defn}
	Let $X$ be a prestack, and $x: S \rightarrow X$ be an $S$-point where $S$ is an affine derived scheme.  The \emph{group of based loops} at $x$, which we denote $\Omega(X, x)$, is the $\infty$-group object\footnote{The notion of an $\infty$-group can be found in Defintion 7.2.2.1 of \cite{Lu:HTT}, a useful characterization in Proposition 7.2.2.4 of \opcit.  Proposition 6.1.2.11 of \opcit shows that Cech nerves are $\infty$-groupoid objects in any $\infty$-category, so that $\Omega(X, x)$ is a groupoid object in derived stacks and therefore a group object in derived stacks over $S$.  We use the notation $\Hom_{grp}$ to denote the space of group homomorphisms.} in prestacks over $S$ defined to be the Cech nerve of the map $x: S \rightarrow X$, i.e. its underlying derived stack is $\LL X \times_X S$ or equivalently the pullback
	$$\begin{tikzcd}
	\Omega(X, x) \arrow[r] \arrow[d] & S \arrow[d, "x"] \\
	S \arrow[r, "x"] & X.
	\end{tikzcd}$$
	If $f: X \rightarrow Y$ is a map of prestacks, with $x \in X(S)$, then there is a natural map of $\infty$-groups $\Omega(f, x): \Omega(X, x) \rightarrow \Omega(Y, f(x))$.  We define the \emph{unipotent based loops} of $X$, denoted $\Omega^u(X, x)$, by the fiber product $\LL^u(X) \times_X S$; there is a natural map $\Omega^u(X, x) \rightarrow \Omega(X, x)$.  Note that the unipotent based loops do not form a group.
\end{defn}

\begin{rmk}
 If $X$ is algebraic, then based loops at $x \in X(S)$ form an $\infty$-group object in derived schemes over $S$, and if $X$ is geometric, the based loops form an $\infty$-group object in affine derived schemes over $S$.  
\end{rmk}

\begin{exmp}
	Let $X$ be an (affine) derived scheme and $x \in X(k)$ a geometric point.  Then, $\Omega(X,x) = \mathbb{T}_{X, x}[-1] = \Spec \Sym x^*\mathbb{L}_{X}[1]$ is the odd tangent space at $x \in X(k)$.  The comultiplication on functions is given by the natural comultiplication on the symmetric algebra and antipode map by the sign morphism.
\end{exmp}


\begin{lemma}\label{UnipotentLoopsDescription}
For any $S$-point $x \in X(S)$, we have a natural identification of prestacks over $S$
$$\Omega^u(X, x) = \Hom_{grp, S}(\G_a \times S, \Omega(X, x))$$
where the natural map $\Omega^u(X, x) \rightarrow \Omega(X, x)$ is identified with evaluation at $1 \in \G_a$.
\end{lemma}
\begin{proof}
Let $f: S' \rightarrow S$ and $\gamma \in \Omega^u(X, x)(S')$ with base point $f^* x$.  Note that 
$$\Omega(X, x) \times_S S' = \Omega(X, f^* x) = \mathrm{Cech}(S' \rightarrow X \times_S S').$$
We will also use $\gamma$ to denote its image in $(\LL^u X)(S')$.  Let $\iota_0: \pt \rightarrow B\G_a$ denote the inclusion of the (additive) identity.  The diagram
$$\begin{tikzcd}
S' \arrow[r, "\mathrm{id}"] \arrow[d, "\iota_0 \times \mathrm{id}_{S'}"] & S' \arrow[d, "x"] \\
B\G_a \times S' \arrow[r, "\gamma"] & X \times_S S'
\end{tikzcd}$$
commutes, and therefore we have a map of the corresponding Cech nerves as $\infty$-groupoids, giving us a map
$$\Omega^u(X, x)(S') \rightarrow \Hom_{grp, S'}(\G_a \times S', \Omega(X, x) \times_S S')$$
for every $S'$, natural in $f$, inducing a map of stacks
$$\Omega^u(X, x) \rightarrow \Hom_{grp, S}(\G_a \times S, \Omega(X, x)).$$

We now produce the inverse to this map.  To do so, we need to produce a map $\Hom_{grp, S'}(\G_a \times S', \Omega(X, f^* x)) \rightarrow \Omega^u(X, x)(S)$ natural in $f: S' \rightarrow S$.  
First, note that $\Omega^u(X, x)(S) = \Map(B\G_a \times S', X) \times_{X(S)} \Map(S', S)$, so we only need to define a map 
$$\Hom_{grp, S'}(\G_a \times S', \Omega(X, f^* x)) \rightarrow \Map(B\G_a \times S', X).$$
Taking geometric realizations of the $\infty$-groups over $S$ considered as simplicial objects in derived stack, we obtain a map $B\G_a \times S' \rightarrow B_S \Omega(X, f^* x)$.  We compose with the map $B_S \Omega(X, f^* x) \rightarrow X$ induced by universal property of geometric realizations applied to the augmentation map $f^*x: S' \rightarrow X$.  We leave the verification that these two maps are inverses to the reader, and naturality with respect to evaluation at $1 \in \G_a$ (essentially since $S^1 = B\Z \rightarrow B\G_a$ is induced by the inclusion of $1 \in \G_a$).
\end{proof}

The following notion of a contracting action will be used in Section \ref{ProGradedSection}.

\begin{defn}\label{DefnContracting}
	Let $X = \Spec(R)$ be an affine scheme with a $\G_m$-action.  We say the $\G_m$-action is \emph{contracting} if it acts by only non-positive weights on $R$.  In this case, the fixed point locus is $Y = \Spec(R^{\G_m})$, and we say the $\G_m$-action contracts to $Y$.  In particular, there are maps $Y \rightarrow X \rightarrow Y$.  More generally, let $X$ be a prestack with a $\G_m$-action, equipped with a $\G_m$-equivariant affine map $p: X \rightarrow Y$ where $Y$ is given the trivial action.  We say the $\G_m$-action \emph{contracts} to $Y$ if for any affine $S$ and map $S \rightarrow Y$, the induced $\G_m$-action on $S \times_Y X$ contracts to $S$.  In particular, this implies there is also a $\G_m$-equivariant section $Y \rightarrow X$.
\end{defn}

\begin{lemma}\label{LemContracting}
	Let $X$ be a quasi-compact geometric stack.  The $\G_m$-actions on $\LL^u X$ and $\mathbb{T}_X[-1]$ contract to the fixed point locus of constant loops.
\end{lemma}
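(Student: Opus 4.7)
The strategy in both cases is to produce an $\A^1$-monoid action extending the given $\G_m$-action whose fiber at $0 \in \A^1$ factors through the constant loops; by the standard dictionary between $\A^1$-monoid extensions and non-positive weights on the structure sheaf, this realizes the contracting condition of Definition \ref{DefnContracting} after checking the affineness requirement by base changing to affines over $X$.

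For $\mathbb{T}_X[-1] = \Spec_X \Sym_X \mathbb{L}_X[1]$, the projection $q$ to $X$ is affine by the relative-spectrum construction. The $\G_m$-action scales the generating object $\mathbb{L}_X[1]$ by pure weight, so $q_* \OO$ has $\Sym^n$ in pure non-positive weight $-n$ and weight-zero summand $\Sym^0 = \OO_X$; the fixed locus is therefore the zero section $X \hookrightarrow \mathbb{T}_X[-1]$, identified with constant loops via the exponential equivalence $\widehat{\mathbb{T}}_X[-1] \simeq \LLf X$. This grading is preserved by base change to any affine $S \to X$, so Definition \ref{DefnContracting} is satisfied.

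For $\LL^u X = \Map(B\G_a, X)$, I would first extend the $\G_m$-action on $B\G_a$ to an $\A^1$-monoid action via the multiplicative monoid on $\G_a$: each $t \in \A^1$ acts on $\G_a$ as an additive endomorphism (multiplication by $t$), descending to an endomorphism of $B\G_a$ that at $t = 0$ factors $B\G_a \to \pt \to B\G_a$. Precomposition yields an $\A^1$-monoid action on $\LL^u X$ extending the $\G_m$-action, whose fiber at $0 \in \A^1$ factors through the constant loops $c: X \to \LL^u X$.

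To complete the argument, I would verify affineness of the evaluation $p: \LL^u X \to X$ by base change: for any affine $S \to X$ with basepoint $x$, Lemma \ref{UnipotentLoopsDescription} identifies $S \times_X \LL^u X = \Omega^u(X, x)$ with $\Hom_{grp, S}(\G_a \times S, \Omega(X, x))$, and since $X$ is geometric the based loops $\Omega(X, x) = S \times_X S$ form an affine derived group scheme over $S$. One then needs that $\Hom_{grp, S}(\G_a \times S, H)$ is affine over $S$ for $H$ an affine derived group scheme --- classically this is a closed subscheme of the nilpotent cone in the Lie algebra of $H$, and in the derived setting it is realized as an equalizer inside the affine derived scheme of algebra maps $\OO(H) \to \OO_S[t]$. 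The sign of the weights follows from the $\A^1$-extension sending $0$ to the trivial homomorphism at $x$. The subtlest step is this affineness claim for $\Hom_{grp}$ of derived affine group schemes, where the Hopf-algebra-homomorphism condition needs to be handled carefully in the derived context.
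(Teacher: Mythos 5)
Your proof is correct and, at its mathematical core, the same as the paper's: for $\mathbb{T}_X[-1]$ the weight grading on $\Sym_X \mathbb{L}_X[1]$ gives the contraction by definition, and for $\LL^u X$ both you and the paper reduce to based loops $\Omega^u(X,x) \simeq \Hom_{grp,S}(\G_a \times S, \Omega(X,x))$ via Lemma \ref{UnipotentLoopsDescription} and then exploit the contracting $\G_m$-action on $\G_a$. Your $\A^1$-monoid packaging (extending the $\G_m$-scaling to the multiplicative monoid acting on $\G_a$, hence on $B\G_a$, hence on $\Map(B\G_a, X)$ by precomposition) is a clean global rephrasing of the fact that $\G_m$ acts on $\G_a$ with non-negative weights, and is essentially equivalent to the paper's one-liner. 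You also correctly observe that Definition \ref{DefnContracting} presupposes affineness of $p: \LL^u X \to X$, a point the paper's proof leaves implicit; flagging the Hopf-homomorphism affineness for $\Hom_{grp,S}(\G_a \times S, H)$ with $H$ derived-affine as the subtle step is the honest thing to do here, and the paper does not spell it out either.
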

\begin{proof}
	The claim for $\mathbb{T}_X[-1]$ is by definition.  For the unipotent loop space, take $x \in X(S)$.  It suffices to show that the induced $\G_m$-action on $\Omega^u(X, x)$ is contracting.  This follows from the description of $\Omega^u(X, x) = \Hom_{grp,S}(\G_a, \Omega(X, x))$, and the contracting $\G_m$-action on $\G_a$.
\end{proof}

\subsubsection{Formal and unipotent loops over schematic maps}

In \cite{BZN:LC}, it is shown that for $X$ a scheme, $\LLf(X) = \LL^u(X) = \LL(X)$.  This is not true for stacks, but we will now show that for a schematic map $f: X \rightarrow Y$, the formal and unipotent loops of $X$ are loops in $X$ whose images in $Y$ are formal and unipotent respectively.

\begin{prop}\label{FormalLoopsQuotient}
	Suppose that $f: X \rightarrow Y$ is a map of algebraic stacks representable by schemes.  Then,
	$$\LLf(X) = \LLf(Y) \times_{\LL(Y)} \LL(X).$$
\end{prop}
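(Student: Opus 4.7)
The plan is to identify both sides with the formal completion of $\LL(X)$ along the same closed substack, up to reduced structure, and then invoke the fact that formal completion depends only on the underlying reduced closed substack.

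First, since $Y \to \LL(Y)$ is a closed immersion by Lemma \ref{FormalLoopsSchematic}, I note that its base change $Z := Y \times_{\LL(Y)} \LL(X) \hookrightarrow \LL(X)$ is also a closed immersion. Formal completion commutes with base change, since it is defined by the topological condition that the underlying classical map factor through the closed substack; hence $\LLf(Y) \times_{\LL(Y)} \LL(X) \simeq \LL(X)^{\wedge}_Z$. Moreover, constant loops in $X$ push forward to constant loops in $Y$, so the inclusion of constant loops $X \to \LL(X)$ factors through $Z$, producing a factorization of closed immersions $X \hookrightarrow Z \hookrightarrow \LL(X)$.

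The main step will be to show that $X \hookrightarrow Z$ induces an equivalence on underlying reduced substacks. A geometric point of $Z$ is a pair $(x, \alpha)$ with $x \in X(k)$ and $\alpha \in \Aut_X(x)$ satisfying $f_*\alpha = \mathrm{id}_{f(x)}$ in $\Aut_Y(f(x))$. Here the schematic hypothesis on $f$ plays the crucial role: for any $k$-point $y \in Y(k)$, the fiber $X \times_Y \Spec(k)$ is a scheme, and schemes have trivial automorphism groups of geometric points, so $f_* \colon \Aut_X(x) \to \Aut_Y(f(x))$ is injective. This forces $\alpha = \mathrm{id}_x$, so every geometric point of $Z$ lies in the image of $X$, and hence $X$ and $Z$ have the same underlying reduced structure inside $\LL(X)$.

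Finally, since $\LL(X)^{\wedge}_W$ depends only on $|W| \subset |\LL(X)|$, and $|X| = |Z|$ inside $|\LL(X)|$, we conclude
$$\LLf(X) = \LL(X)^{\wedge}_X = \LL(X)^{\wedge}_Z = \LLf(Y) \times_{\LL(Y)} \LL(X).$$
The main technical obstacle is the third paragraph, where the schematic hypothesis must be translated into injectivity of $f_*$ on classical automorphism groups of points; once that is in hand, the rest is formal manipulation with closed immersions, base change of formal completions, and the invariance of formal completion under reduced structure. A cautionary example confirming the necessity of ``schematic'' is $f \colon BG \to \pt$ for nontrivial $G$, in which case $\LLf(\pt) \times_{\LL(\pt)} \LL(BG) = \LL(BG) = G/G$, while $\LLf(BG)$ is the completion of $G/G$ along $\{e\}/G$.
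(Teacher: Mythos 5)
Your proof is correct and takes essentially the same approach as the paper's: reduce via Lemma \ref{FormalNbhd} to comparing the reduced classical substacks of $X$ and $Z = Y \times_{\LL Y} \LL X$ inside $\LL X$, check on geometric points, and observe that the schematic hypothesis forces the based loop group $\Omega(X,x) \to \Omega(Y, f(x))$ to have trivial kernel because the classical fiber $f^{-1}(f(x))$ is a scheme. The paper phrases this via a diagram of classical pullbacks and a closed-immersion argument rather than the kernel of $f_*$ on automorphism groups, but the mechanism is identical.
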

\begin{proof}
	It suffices to show that the closed classical substack $\pi_0(Y \times_{\LL(Y)} \LL(X))$ has the same reduced points as $X \subset \LL(X)$.  To do this, it suffices to check on geometric points.  
	Consider the diagram of \emph{classical} pullbacks
	$$\begin{tikzcd}
	\Omega(X, x) \arrow[r] \arrow[d] & \Omega(Y, f(x)) \arrow[r] \arrow[d] & \Spec k \arrow[d, "x"] \\
	\Spec k \arrow[r, "x"]&f^{-1}(f(x)) \arrow[d]  \arrow[r] & X \arrow[d] \\
	&\Spec k \arrow[r, "f(x)"] & Y.
	\end{tikzcd}$$
	Since $f$ is schematic, $x: \Spec k \rightarrow f^{-1}(f(x))$ is a closed embedding of schemes (since it as a map of schemes admitting a retract; see Lemma \ref{FormalLoopsSchematic}).  Since $Y$ is an algebraic stack, $\Omega(Y, f(x)))$ is a scheme, and so $\Omega(X, x) \rightarrow \Omega(Y, f(x))$ is a closed embedding of schemes and a map of affine (classical) group schemes.  The preimage of the identity is thus the identity, so constant loops in $\LL Y$ are preimages of constant loops in $\LL X$.
\end{proof}

\begin{exmp}[Quotient stacks]
	In the case of quotient stacks, we have a map $\LL(X/G) \rightarrow \LL(BG) = G/G$.  The above proposition says that $\LLf(X/G)$ is the completion of $\LL(X/G)$ at the closed substack of points lying over $\{e\}/G \subset G/G$.
\end{exmp}




The following is well-known, but we provide a brief argument for the reader's convenience.
\begin{prop}\label{MapBG}
Let $H, G$ be groups in affine derived schemes over $k$.  There is a natural identification
$$\Map(BH, BG) = \Hom_{grp}(H, G)/G$$
where $G$ acts on $\Hom_{grp}(H, G)$ by the adjoint action.
\end{prop}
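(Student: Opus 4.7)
The plan is to interpret $\Map(BH, BG)$ as the groupoid of $G$-torsors on $BH$ and then to classify these by descent along the atlas $\pt \to BH$. First, since $BG$ is the moduli of $G$-torsors, an $S$-point of $\Map(BH, BG)$ is a $G$-torsor on $BH \times S$. Using $BH = [\pt/H]$, descent along the cover $\pt \times S \to BH \times S$ translates this into the data of a $G$-torsor $P$ on $S$ equipped with an $H$-action lifting the trivial $H$-action on $S$.

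Next, I would fix a trivialization. Working locally on $S$ we may assume $P \simeq G \times S$ with its right translation action, so that an $H$-equivariant structure becomes a left $H$-action on $G \times S$ over $S$ commuting with right $G$-multiplication. Such an action is pinned down by its effect on the identity section, giving a group homomorphism $\phi \colon H \times S \to G \times S$ over $S$, i.e. an $S$-point of $\Hom_{\mathrm{grp}}(H, G)$. Conversely any such $\phi$ yields an action by $h \cdot (g,s) = (\phi(h,s)\, g, s)$, so the fiber of $\Map(BH, BG) \to \Map(\pt, BG) = BG$ over the trivial torsor is canonically $\Hom_{\mathrm{grp}}(H, G)$.

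Finally, I would account for the change of trivialization. The automorphism group of the trivial $G$-torsor is $G$ (acting on $G \times S$ by left multiplication), and re-trivializing by $g_0 \in G$ replaces the action $h \cdot (g,s) = (\phi(h)\, g, s)$ by the conjugated one $h \cdot (g,s) = (g_0\phi(h)g_0^{-1}\, g,s)$. Hence the fibration $\Map(BH, BG) \to BG$ with fiber $\Hom_{\mathrm{grp}}(H, G)$ is precisely the one associated to the adjoint action of $G$ on $\Hom_{\mathrm{grp}}(H, G)$, yielding
$$\Map(BH, BG) \;=\; \Hom_{\mathrm{grp}}(H, G) \times^{G} \pt \;=\; \Hom_{\mathrm{grp}}(H, G)/G$$
with the adjoint action, as claimed.

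The main subtlety, rather than an obstacle, is the $\infty$-categorical bookkeeping: to formalize the above I would present $BH$ as the geometric realization $|H^\bullet|$ of its bar simplicial object and use $\Map(BH, BG) = \lim_{[n] \in \Delta^{\opp}} \Map(H^n, BG)$, checking that the resulting totalization of cosimplicial groupoids matches the quotient above. Since $G$ lies in affine derived schemes, all the higher coherences needed to glue $\phi$ along the simplicial diagram are already recorded by the $G$-equivariance data, so this limit computes exactly the 1-categorical quotient $\Hom_{\mathrm{grp}}(H, G)/G$.
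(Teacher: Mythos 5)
Your argument is correct and proceeds in the same basic spirit as the paper's — both interpret a map $BH\to BG$ as a $G$-torsor on $BH$ and restrict along the atlas $\pt\to BH$ to extract an $H$-equivariant $G$-torsor on the base — but the implementations differ in a way worth noting. The paper avoids any local trivialization: it writes explicit mutually inverse maps on $S$-points, with $\Phi$ sending a functor $F$ of torsor categories to the pair $\bigl(F(H\times S'),\ \phi\bigr)$ (the $G$-torsor obtained from the trivial $H$-torsor together with its $G$-equivariant map to $\Hom_{grp}(H,G)$ read off from $F$ applied to automorphisms), and with $\Psi$ given by the associated-bundle construction $Q\mapsto Q\times^H(P\times_S S')$. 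You instead work locally on $S$ to trivialize $P$, identify the fiber of $\Map(BH,BG)\to BG$ over the trivial torsor with the space of pointed maps $\Hom_{grp}(H,G)$, and then recover the adjoint quotient by tracking changes of trivialization. The two routes encode the same underlying bijection; the paper's is more explicit, while yours is more descent-theoretic, and both leave the coherence verifications to the reader (the paper says so explicitly). One small caution about your closing paragraph: the assertion that the totalization over $\Delta^{\opp}$ "computes exactly the 1-categorical quotient" because the $G$-equivariance data records all higher coherences is plausible but not justified as stated, and in the derived setting $\Hom_{grp}(H,G)$ is a genuinely derived mapping stack, so "1-categorical" should be interpreted loosely; this is the same level of rigor the paper adopts, so it is not a gap relative to the paper, but it is the place where a fully detailed proof would have to do real work.
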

\begin{proof}
Note that we assume $k$ has characteristic zero.  
We define maps $\Phi: \Map(BH, BG) \rightarrow \Hom_{grp}(H, G)/G$ and $\Psi: \Hom_{grp}(H, G)/G \rightarrow \Map(BH, BG)$ and leave the verification that they are strict inverses to the reader.  These are maps of sheaves; we will restrict our attention to defining their map on $S$-points $\Phi_S$ and $\Psi_S$.

The map $\Phi_S$ of spaces is defined as follows.  Let $F \in \Map_S(BH \times S, BG \times S)$ be the map of sheaves whose value at $S' \rightarrow S$ is a functor $F_{S'}$ from (right) $H$-torsors over $S'$ to $G$-torsors over $S'$.  We define
$$\Phi_S(F)(S') = \left\{ \begin{tikzcd}
F_{S'}(H \times S') \arrow[r, "\phi"] \arrow[d, "G-\text{torsor}"] &  \Hom_{S-grp}(H \times S, G \times S) \\
S' \end{tikzcd}\right\}
$$
where $\phi$ is defined as follows.  There is a canonical identification of automorphisms of the trivial torsor $\Aut_{S'}(S' \times H) = H(S')$, and for $h \in \Aut_S(S \times H)$, $F(h)$ is an automorphism of $F(S' \times G)$, which we abusively write $F(h) \in G$ as an section in $G(S')$.  We define $\phi(x)(h) =x \cdot F(h)$.  

The map $\Psi$ is defined as follows.  The $S$-points of $\Hom_{grp}(H, G)/G$ are $G$-torsors $P$ over $S$ with $G$-equivariant maps $\phi: P \rightarrow \Hom_{grp}(H, G)$.  We define for $S' \rightarrow S$ and $Q$ a $H$-torsor over $S'$
$$\Psi_S(P, \phi)(S')(Q) = Q \times^H (P \times_S S')$$
where $H$ acts on $P$ on the left via $\phi$.
\end{proof}

\begin{defn}
A map of prestacks $X \rightarrow Y$ is a \emph{monomorphism}, i.e. $X$ is a \emph{substack} of $Y$, if for any affine derived scheme $S$ and $y \in Y(S)$ the fiber product $\{y\} \times_{Y(S)} X(S)$ is contractible (in the category of spaces).
\end{defn}

%

\begin{prop}\label{UniMono}
Let $X$ be a geometric stack.  The map $\LL^u X \rightarrow \LL X$ is a monomorphism, i.e. unipotence of a loop is a property and not a structure.
\end{prop}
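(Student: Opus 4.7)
The plan is to reduce to a statement about based loops, and then to translate it into a question about Hopf algebra maps $\OO(H) \to \OO(S)[x]$, where the faithfulness of the forgetful functor to coalgebras (as assumed in the preceding remark) supplies the key input.

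\emph{Reduction to based loops.}  The map $\LL^u X \to \LL X$ is a morphism of derived prestacks over $X$ via the evaluation/basepoint maps, so being $(-1)$-truncated may be checked fiberwise.  For each affine derived scheme $S$ and $x : S \to X$ the fiber in question is the induced map of based loops $\Omega^u(X, x) \to \Omega(X, x)$ over $S$.  Writing $H := \Omega(X, x)$, which is an affine derived group scheme over $S$ by the geometric hypothesis on $X$, Lemma \ref{UnipotentLoopsDescription} rewrites the source as $\Hom_{\mathrm{grp}, S}(\G_a \times S, H)$ and the map as the evaluation at $1 \in \G_a(S)$:
\[
\mathrm{ev}_1 : \Hom_{\mathrm{grp}, S}(\G_a \times S, H) \longrightarrow H.
\]
It thus suffices to show that $\mathrm{ev}_1$ is a monomorphism for every affine derived group scheme $H$ over $S$.

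\emph{Passage to Hopf algebras and coalgebras.}  Set $A = \OO(H)$, a commutative Hopf algebra object in $\OO(S)$-modules, and identify the source of $\mathrm{ev}_1$ with $\Map_{\mathrm{Hopf}/\OO(S)}(A, \OO(S)[x])$, where $\OO(S)[x]$ carries its standard Hopf structure ($\Delta x = x \otimes 1 + 1 \otimes x$, $\epsilon x = 0$).  By the faithfulness of the forgetful functor from Hopf objects to coalgebras invoked in the preceding remark, this mapping space sits as a $(-1)$-truncated subspace of $\Map_{\mathrm{coalg}/\OO(S)}(A, \OO(S)[x])$.  I would then argue that on the coalgebra side any map $\phi$ is determined by its linear coefficient $c_1 : A \to \OO(S)$: writing $\phi(a) = \sum_n c_n(a)\, x^n$, the coalgebra axiom together with $\Delta x^n = \sum_k \binom{n}{k} x^k \otimes x^{n-k}$ forces $c_n = \tfrac{1}{n!}\, c_1^{*n}$ in the convolution algebra $(\Hom_{\OO(S)}(A, \OO(S)), *)$, so that $\mathrm{ev}_1(\phi) = \sum_n c_n = \exp_*(c_1)$ is the formal exponential.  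Combined with the algebra-compatibility that promotes a coalgebra map to a Hopf map, which forces $c_1$ to be a derivation at the augmentation, this identifies $\mathrm{ev}_1$ with the classical exponential map from derivation-valued functionals onto the unipotent locus of $H$, which is a closed immersion and hence a monomorphism.

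\emph{Main obstacle.}  The principal technical hurdle lies in making the Sweedler-style coefficient analysis rigorous in the $\infty$-categorical setting of derived Hopf algebras over a derived affine base, rather than only for classical Hopf algebras over a field.  The role of the preceding remark is precisely to package this: once one has a theory of Hopf objects for which the forgetful functor to coalgebras is faithful, the calculation can be carried out on the coalgebra side, where the target $\OO(S)[x]$ is the free $\OO(S)$-coalgebra on a primitive generator, and the extra Hopf structure contributes only a property (via faithfulness), not data.
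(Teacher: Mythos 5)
Your plan follows the paper's proof almost step for step: reduce to based loops over an affine test scheme (using that $\Omega(X,x)$ is derived affine when $X$ is geometric), translate to a question about maps of (co)algebra objects, invoke the faithfulness of the forgetful functor to coalgebra objects from the preceding remark, and finish by exploiting that $\OO_{S \times \G_a}$ is the cofree coalgebra object in $\Coalg(\QCoh(S))$. The only substantive divergence is in the final step: where the paper simply cites Lemma 1.12 of \cite{GG:DGC} for cofreeness of $\OO_S \otimes_k \OO(\G_a)$ and concludes from the resulting chain of monomorphisms, you attempt to reprove that cofreeness by hand through the classical Sweedler coefficient calculation $c_n = \tfrac{1}{n!}c_1^{*n}$ and then package the answer as a formal exponential. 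Your paragraph labeled ``Main obstacle'' correctly names the issue: the Sweedler manipulation is a strict, on-the-nose calculation with simplices and does not directly lift to the $\infty$-categorical setting of derived coalgebras over a derived affine base, which is exactly why the paper outsources that content to a cited lemma about cofree coalgebra objects rather than redoing it.

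One caution on your closing sentence: the appeal to ``the classical exponential map from derivation-valued functionals onto the unipotent locus of $H$, which is a closed immersion and hence a monomorphism'' is both unnecessary and close to circular. The paper does not need to identify the map with an exponential onto a unipotent locus, nor does it need to assert that such a locus is closed --- that closedness (for quotient stacks, say) is in effect part of what is being established in this section, via Proposition \ref{UnipotentLoopsQuotient}. The paper's argument avoids this entirely by just observing that the composite
$$\Hom_{grp,S}(\G_a \times S, H) \hookrightarrow \Hom_{mon,S}(\G_a \times S, H) \hookrightarrow \Hom_{\Coalg(\QCoh(S))}(A, \OO_{S \times \G_a}) \xrightarrow{\ \simeq\ } \Hom_R(A, R)$$
is a monomorphism, with the last equivalence supplied by cofreeness; no exponential, logarithm, or closedness of the unipotent locus is invoked. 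If you retain the exponential picture you should at least make the role of the preceding remark explicit --- it is what lets you transport the question from the monoid/Hopf side to the coalgebra side, where cofreeness applies --- and you should drop the closed-immersion claim, which presupposes facts that are established only afterwards in the paper.
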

\begin{proof}
Let $S = \Spec(R)$.  Consider an $S$-point $\gamma \in (\LL X)(S)$, which determines a base point $x \in X(S)$ and a based loop $g \in \Omega(X, x)(S)$.  We wish to show that $\LL^u X(S) \times_{\LL X(S)} \{(\gamma)\}$ is contractible.  Equivalently, we wish to show that $\Omega^u(X, x)(S) \times_{\Omega(X, x)(S)} \{g\}$ is contractible.  Since $X$ is geometric, $\Omega(X, x)$ is derived affine.  By Lemma \ref{UnipotentLoopsDescription} it suffices to show that for a derived affine group $G$ over $S$, the map of stacks $\Hom_{grp,S}(\G_a \times S, G) \rightarrow G$ is a monomorphism on $S$-points.  Note that an $\infty$-group object is by definition an $\infty$-monoid object satisfying a condition; in particular, the map $\Hom_{grp,S}(\G_a \times S, G) \rightarrow \Hom_{mon,S}(\G_a \times S, G)$
is a monomorphism.  Furthermore, the forgetful functor from $\Coalg(\Alg(R))$ to $\Coalg(R\dmod) = \Coalg(R)$ is fully faithful, so the map $\Hom_{mon,S}(\G_a \times S, G) \rightarrow \Hom_{\Coalg(R)}(p_* \OO_{\Omega(X, x)}, \OO_{S \times \G_a})$ is a monomorphism. It suffices to show that the map induced by evaluation at $1 \in \G_a$:
$$\Hom_{\Coalg(R)}(p_* \OO_{\Omega(X, x)}, \OO_{S \times \G_a}) = \Hom_{\Coalg(R\dmod)}(R[\Omega(X, x)], R[x]) \rightarrow \Hom_{R}(R[\Omega(X, x)], R)$$
is also a monomorphism.  By the calculation in Lemma 1.12 of \cite{GG:DGC}, $\OO_S \otimes_k \OO(\G_a)$ is the cofree coalgebra object in $\Coalg(\QCoh(S))$, so this map is an equivalence.
\end{proof}

\begin{prop}[Unipotent loops of quotient stacks]\label{UnipotentLoopsQuotient}
Let $G$ be a classical affine algebraic group over $k$.  Then,
$$\LL^u(BG) = \widehat{U}/G$$
where $U$ is the unipotent cone of $G$ (i.e. the closed subvariety of unipotent elements of $G$).   Furthermore, if $G$ acts on a scheme $X$, then $\LL^u(X/G)$ is computed by the pullback square
$$\begin{tikzcd}
\LL^u(X/G) \arrow[d] \arrow[r] & \LL(X/G) \arrow[d] \\
\widehat{U}/G \arrow[r] & G/G.
\end{tikzcd}$$
\end{prop}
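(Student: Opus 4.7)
The plan is to establish the two assertions in sequence. For the equality $\LL^u(BG) = \widehat{U}/G$, I would first apply Proposition \ref{MapBG} to obtain
\[ \LL^u(BG) = \Map(B\G_a, BG) = \Hom_{grp}(\G_a, G)/G, \]
and then identify the group-$\Hom$ scheme with the formal unipotent scheme $\widehat{U}$. At the level of classical $k$-points, this is the standard characteristic-zero correspondence between one-parameter subgroups $\G_a \to G$ and nilpotent elements of $\g$ given by the exponential: each such $\phi$ is $t \mapsto \exp(tN)$ for $N = d\phi_0$, and $\phi(1) = \exp(N)$ is a unipotent element. To promote this to an equivalence of prestacks, I would pass through the Hopf-algebra description used in the proof of Proposition \ref{UniMono}, matching $R$-valued group maps $\G_a \otimes R \to G \otimes R$ with $R$-points of the formal completion of $G$ along its unipotent cone.

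For the Cartesian square, I would argue in direct analogy with Proposition \ref{FormalLoopsQuotient}, the formal-loops version. The key input is that $X \to X/G$ is a $G$-torsor, so $X/G \to BG$ is representable by schemes, and for a geometric point $[x] \in X/G$ with lift $x \in X$ the based loop map
\[ \Omega(X/G, [x]) = \Stab_G(x) \hookrightarrow \Omega(BG, *) = G \]
is a closed inclusion of group schemes. By Proposition \ref{UniMono}, unipotence of a loop is a property, so it suffices to show that a loop in $\LL(X/G)$ lifts (uniquely) to $\LL^u(X/G)$ iff its image in $\LL(BG)$ is unipotent. Using Lemma \ref{UnipotentLoopsDescription}, a unipotent lift at $[x]$ amounts to a group map $\G_a \to \Stab_G(x)$ whose image under $\Stab_G(x) \hookrightarrow G$ is a prescribed one-parameter subgroup; since $\Stab_G(x)$ is a closed subgroup of $G$, its Lie algebra is closed in $\g$ under $\exp$, so any one-parameter unipotent subgroup of $G$ passing through an element of $\Stab_G(x)$ stays in $\Stab_G(x)$. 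This matches unipotent lifts with unipotent images in $\LL(BG)$, giving the Cartesian square.

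The main technical obstacle is the first identification: matching the derived prestack $\Hom_{grp}(\G_a, G)$ with $\widehat{U}$ requires more than classical points, since derived $R$-points are sensitive to nilpotent and derived thickenings of $R$. The cleanest route is to bootstrap off the Hopf/coalgebra framework of Proposition \ref{UniMono}, together with the cofree coalgebra identification from \cite{GG:DGC}, rather than arguing purely through the classical exponential correspondence. The Cartesian-square part, by contrast, should be largely a formal consequence of schematicity of $X/G \to BG$ combined with Proposition \ref{UniMono}.
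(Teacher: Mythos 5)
Your proposal takes the same conceptual ingredients as the paper's proof (the exponential/logarithm correspondence in characteristic zero, the cofree-coalgebra argument underlying Proposition \ref{UniMono}, the fact that unipotence is preserved under maps of linear algebraic groups, and Lemma \ref{UnipotentLoopsDescription}), but it organizes the argument differently. You first apply Proposition \ref{MapBG} to reduce the $BG$ statement to identifying $\Hom_{grp}(\G_a, G)$ with $\widehat{U}$, and then separately attack the Cartesian square. The paper never invokes Proposition \ref{MapBG}: it reduces immediately to an analysis of based loops $\Omega(X/G, x)$ at an arbitrary $S$-point $x$, constructs an evaluation map $\Phi: \Omega^u(X/G,x) \to \widehat{U}_{\Omega(X/G,x)}$ (using the fact that factoring through a formal completion can be checked on classical reduced points), and constructs an inverse $\Psi$ explicitly by exponentiating, transporting a given exponential map $\G_a \times \widehat{U}_G \to G$ into the stabilizer via the universal property of fiber products. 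This handles both assertions at once, since the $BG$ statement is the case $X = \pt$.

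There is a real gap in what you've written, and you half-flag it yourself. For the Cartesian square you argue at geometric points: ``any one-parameter unipotent subgroup of $G$ passing through an element of $\Stab_G(x)$ stays in $\Stab_G(x)$.'' That is correct over $k$, but the Cartesian square is a statement of derived prestacks, so one must produce, for an arbitrary affine derived scheme $S$ and $S$-point of the fiber product $\LL(X/G) \times_{\LL(BG)} \widehat{U}/G$, an actual $S$-point of $\LL^u(X/G)$, not just a dashed arrow on $k$-points. Proposition \ref{UniMono} gives you uniqueness of such a lift, not existence; and the formal-completion flavor of $\widehat{U}/G$ is exactly what makes this nontrivial on nilpotent and derived thickenings. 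The paper's mechanism for closing this is the explicit $\Psi$: it extends the classical exponential $\G_a \times \widehat{U}_G \to G$ to an $S$-linear map $\G_{a,S} \times_S \widehat{U}_{\Omega(X/G,x)} \to \Omega(X,x)$ using the diagram of fiber products and the fact (on $S$-points, not just $k$-points) that if $g \in G(S)$ fixes $x \in X(S)$ then so does $g^t$ for $t \in \G_a(S)$. Your proposal needs that construction, or something equivalent to it, spelled out; the appeal to ``largely a formal consequence of schematicity'' is too optimistic, since the analogue of Proposition \ref{FormalLoopsQuotient} for unipotent loops is precisely what is being proved here, not a black box you can invoke.
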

\begin{proof}
It suffices to prove the statement on based loops at a given $x \in (X/G)(S)$ where $S$ be an affine derived scheme, say $S = \Spec(R)$.  The based loops in $X/G$ for any $S$-point $x \in X(S)$ can be computed via a fiber product
$$\begin{tikzcd}
\Omega(X/G, x) \arrow[r] \arrow[d] & \Omega(BG, p(x)) = G \times S \arrow[d] \\
S \arrow[r, "x"] & X.
\end{tikzcd}$$
Let $U_G$ denote the unipotent cone of $G$, and let $U_{\Omega(X/G, x)}$ denote the closed reduced classical subscheme of the inverse image of $U_G \times S \subset G \times S$.  Let $\widehat{U}_{\Omega(X/G, x)}$ the formal completion of $\Omega(X/G, x)$ at $U_{\Omega(X/G, x)}$.  We first claim that $\Omega^u(X/G, x) = \widehat{U}_{\Omega(X/G, x)}$.

Using Proposition \ref{UnipotentLoopsDescription}, evaluation at $1 \in \G_a$ provides a map $\Phi: \Omega^u(X/G, x) \rightarrow \Omega(X/G, x)$, which is a monomorphism by Proposition \ref{UniMono}.  Such a map factors through $\widehat{U}_{\Omega(X/G, x)}$ if its (classical) set-theoretic image lies in the inverse image of the unipotent cone $U \times S$.  This, we can check on $k$-points of $S$, and in particular assume that $S = k$.  The claim now follows from the classical result that for a map of linear algebraic groups over $k$, the image of a unipotent element must also be unipotent (i.e. its eigenvalues are all $1 \in k$).  In particular, $\Omega^u(X/G, x) \subset \widehat{U}_{\Omega(X/G, x)} \subset \Omega(X/G, x)$.

For surjectivity, we define the inverse map $\Psi: \widehat{U}_{\Omega(X/G, x)} \rightarrow \Omega^u(X/G, x) = \Hom_{grp, S}(\G_{a, S}, \Omega(X/G, x))$ via the adjoint to an exponential map $\G_{a, S} \times_S \widehat{U}_{\Omega(X/G, x)} \rightarrow \Omega(X/G, x)$ which we will now construct.  We take as a given that such an exponential map is constructed for classical affine algebraic groups over $k$, i.e. we have a map $\G_a \times \widehat{U}_{G} \rightarrow G$.  To define an exponential map for $\Omega(X/G, x)$, we use the universal property of fiber products and the classical fact that if an $S$-point $g \in G(S)$ fixes $x \in X(S)$, then so does $g^t \in G(S)$ for $t \in \G_a(S)$.  More precisely, the following diagram commutes, inducing the desired exponential map
$$\begin{tikzcd}
\G_{a,S} \times_S \widehat{U}_{\Omega(X/G, x)} \arrow[r] \arrow[dr, dotted] \arrow[ddr] & \G_{a,S} \times \widehat{U}_G \arrow[dr, "\exp"]  \\
& \Omega(X/G, x) \arrow[r] \arrow[d] & G \times S \arrow[d] \\
& S \arrow[r, "x"] & X.
\end{tikzcd}$$
\end{proof}

\subsection{Cyclic homology}\label{HHSect}

In this section we give a brief overview of the basic definitions of Hochschild homology and cyclic homology, as developed in Section 5.5 of \cite{Lu:HA}.  Further discussion can be found in \cite{BZN:NT} \cite{BZN:NT} \cite{Ho:HH} \cite{AMGR:FHE} \cite{AF:CH} \cite{AFR:FH1} \cite{NS}.

\begin{defn}
Let $\cat{Cat}_\otimes$ be a symmetric monoidal $\infty$-category with monoidal unit $1_{\otimes}$, and $X \in \cat{Cat}_{\otimes}$ a 1-dualizable object with dual $X^\vee$, coevaluation $\eta: 1_{\otimes} \rightarrow X \otimes X^\vee$ and evaluation $\epsilon: X^\vee \otimes X \rightarrow 1_{\otimes}$.  We define the \emph{dimension} of $X$ by
$$\dim(X) = \epsilon \circ \eta \in \cat{End}_{\cat{Cat}_{\otimes}}(1_{\otimes}).$$ 
If $F: X \rightarrow Y$ is a morphism with a right dual (i.e. adjoint) $G$, then we can define
$$\dim(F): \begin{tikzcd}
\dim(X) \ar[r, "\gamma"] & \text{tr}(G \circ F) \ar[r, "\simeq"] & \text{tr}(F \circ G) \ar[r, "\upsilon"] & \dim(Y).
\end{tikzcd}$$
\end{defn}

\begin{rmk}
More precisely, the a choice of dualizing structure for $X \in \cat{Cat}_{\otimes}$ determines an explicit dimension $\dim(X)$.  By Proposition 4.6.1.10 in \cite{Lu:HA}, the space of dualizing structures on $X$ is contractible; therefore, $\dim(X)$ is defined uniquely up to unique isomorphism in the homotopy category $\Ho(\cat{End}_{\cat{Cat}_\otimes}(1_\otimes))$.  This fact allows us to compute Hochschild homology in two different ways using different dualizing structures on a category and know that they are equivalent without explicitly producing an equivalence.
\end{rmk}

\begin{rmk}
Lurie's proof of the Cobordism Hypothesis \cite{Lu:TFT} allows for an equivalent formulation: there is a bijection between $1$-dualizable objects $X \in \cat{Cat}_{\otimes}$ and framed extended $\cat{Cat}_{\otimes}$-valued $n$-dimensional topological field theories $\mathcal{Z}_X$; for a 1-dualizable object $X$ we define the dimension by
$$\dim(X) = \mathcal{Z}_X(S^1).$$
By this definition, there is evidently an $S^1$-action on $\dim(X)$.  The relationship between $S^1$-action and its explicit realization via the cyclic structure is spelled out in Theorem 5.5.3.11 in \cite{Lu:HA}.
\end{rmk}


We will define the Hochschild homology of a category to be its dimension; we first need to define a monoidal structure on $\infty$-categories.
\begin{defn}[Lurie tensor product]
The category $\cat{Pr}^L$ is equipped with a monidal structure called the \emph{Lurie tensor product} in Proposition 4.8.1.15 of \cite{Lu:HA}, which can be thought of as an $\infty$-analogue of the Deligne tensor product.  
It is equipped with a canonical functor 
$$\cat{C} \times \cat{D} \rightarrow \cat{C} \otimes \cat{D} \;\;\;\;\;\;\;\;\;\; (X, Y) \mapsto X \boxtimes Y.$$
which is initial amongst functors out of $\cat{C} \times \cat{D}$ which preserve small colimits separately in each variable.
\end{defn}

\begin{rmk}
Proposition 4.8.1.17 of \cite{Lu:HA} provides an explicit realization $\cat{C} \otimes \cat{D} \simeq \cat{Fun}^R(\cat{C}^{op}, \cat{D})$ which is presentable by Lemma 4.8.1.16 of \cite{Lu:HA}.  In particular, by \cite{Lu:HTT} Proposition 5.5.3.8, the Lurie tensor product makes $\cat{Pr}^L$ into a closed monoidal category with internal mapping object $\cat{Fun}^L(-, -)$.  Furthermore, by Propositions 4.8.2.10 and 4.8.2.18 in \cite{Lu:HA}, the Lurie tensor product induces a tensor product on $k$-linear presentable categories $\cat{Pr}^L_k$.
\end{rmk}

\begin{defn}
Let $\cat{Pr}^{L, \omega}_{k, \vee}$ be the $\infty$-category of dualizable presentable stable $k$-linear $\infty$-categories, and functors which preserve compact objects.  We define the \emph{Hochschild homology} functor by
$$HH := \dim: \cat{Pr}^{L, \omega}_{k, \vee} \rightarrow \cat{Fun}^L_k(\cat{Vect}_k, \cat{Vect}_k) \simeq \cat{Vect}_k.$$
By \cite{AFR:FH1}, the Hochschild homology as defined above has an $S^1$-action.
\end{defn}

\begin{rmk}
The dimension of a dualizable $k$-linear category takes values in chain complexes under the equivalence
$$\cat{End}_{\cat{Cat}_{\otimes}}(1_{\otimes}) = \cat{Fun}^L_k(\cat{Vect}_k, \cat{Vect}_k) \simeq \cat{Vect}_k.$$  
That is, every such endofunctor $F$ commuting with colimits is determined by its value $F(k)$.  In particular, a choice of dualizing structure determines an explicit Hochschild chain complex.
\end{rmk}

\begin{rmk}
Note that the Lurie tensor product is defined on $\cat{Pr}^L$, and dimension is functorial only for right dualizable maps, so Hochschild homology is only functorial for functors whose right adjoints also admit right adjoints (equivalently, whose right adjoints are also continuous).  By Proposition 5.5.7.2 of \cite{Lu:HTT}, these are exactly the functors which preserve compact objects.
\end{rmk}

Traditionally, Hochschild homology is formulated in the setting of small stable $k$-linear dg categories.  In particular, in this setting it is possible to explicitly write out a bar complex computing the Hochschild homology.  We will see that the above ``large'' notion of Hochschild homology is a generalization of the ``small'' version.  The following is proven in Theorem D.7.0.7 in \cite{Lu:SAG} and Proposition 4.6.15 in \cite{Lu:HA}.
\begin{prop}
If $\cat{C} \in \cat{Pr}^L_k$ is compactly generated, then it is dualizable.  In particular, if $\cat{C} = \Ind(\cat{C}^0)$, then $\cat{C}^\vee = \Ind(\cat{C}^{0, op})$, and the evaluation map is given by ind-completion via universal properties of the Yoneda pairing $\Hom(-, -): \cat{C}^{0, op} \times \cat{C} \rightarrow \cat{Vect}_k$.  Furthermore, there are natural isomorphisms
$$\cat{Fun}^L_k(\cat{C}, \cat{C}) \simeq \cat{Fun}^L_k(\cat{C}, \cat{Vect}_k) \otimes \cat{C} \simeq \cat{C}^\vee \otimes \cat{C}$$
which realize the coevaluation via the identity functor in $\cat{Fun}^L_k(\cat{C}, \cat{C})$.  
\end{prop}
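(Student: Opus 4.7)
The plan is to reduce to the small-category setting by writing $\cat{C} = \Ind(\cat{C}^0)$ for $\cat{C}^0 = \cat{C}^\omega$ the small stable $k$-linear $\infty$-subcategory of compact generators, then construct explicit evaluation and coevaluation maps satisfying the triangle identities, after which the displayed natural isomorphisms will follow formally from the closed monoidal structure on $\cat{Pr}^L_k$.

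First, I would define the candidate dual by $\cat{C}^\vee := \Ind(\cat{C}^{0, \opp})$ and the evaluation $\epsilon \colon \cat{C}^\vee \otimes \cat{C} \to \cat{Vect}_k$ as the ind-extension of the Yoneda bifunctor $\Hom_{\cat{C}^0}(-,-) \colon \cat{C}^{0, \opp} \times \cat{C}^0 \to \cat{Vect}_k$. Existence and uniqueness of this extension are guaranteed by the universal property of the Lurie tensor product, which receives separately colimit-preserving bifunctors out of $\cat{C}^\vee \times \cat{C}$, combined with the characterization of $\Ind$ as the free cocompletion under filtered colimits in $\cat{st}_k$.

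Next, I would construct the coevaluation using the closed structure on $\cat{Pr}^L_k$. Namely, the chain of equivalences $\cat{C}^\vee \otimes \cat{C} \simeq \cat{Fun}^L_k(\cat{C}, \cat{Vect}_k) \otimes \cat{C} \simeq \cat{Fun}^L_k(\cat{C}, \cat{C})$ (the second step relying on the tensoring-out identity available for compactly generated $\cat{C}$ as established in \cite{Lu:HA} Section 4.8) allows me to define $\eta \colon \cat{Vect}_k \to \cat{C}^\vee \otimes \cat{C}$ to correspond to $\mathrm{id}_{\cat{C}}$ under the equivalence $\cat{Vect}_k \simeq \cat{Fun}^L_k(\cat{Vect}_k, \cat{Vect}_k)$. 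Once $\eta$ is specified this way, the final displayed natural isomorphisms in the proposition are tautologies.

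Finally, the triangle identity $(\mathrm{id}_\cat{C} \otimes \epsilon) \circ (\eta \otimes \mathrm{id}_\cat{C}) \simeq \mathrm{id}_\cat{C}$ and its mate would be verified by reducing, via the universal property of $\Ind$ and separate continuity in each variable, to a check on compact generators $X \in \cat{C}^0$, where the composition computes the canonical map $\colim_{Y \in \cat{C}^0} \Hom_{\cat{C}^0}(Y, X) \otimes_k Y \to X$ and is an equivalence by the stable $\infty$-categorical Yoneda lemma (\cite{Lu:HTT} Section 5.1.3). The main obstacle I anticipate is not this reduction itself but organizing the symmetric-monoidal coherences for $\epsilon$ and $\eta$ at the $\infty$-categorical level --- assembling the triangle identity into a coherent homotopy rather than merely an equivalence between morphisms in the homotopy category --- and for this I would lean on the abstract machinery of \cite{Lu:HA} Section 4.8 and \cite{Lu:SAG} Appendix D.7 instead of producing the coherence data by hand.
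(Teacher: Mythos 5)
The paper does not give a proof of this proposition at all; the sentence immediately preceding it reads ``The following is proven in Theorem D.7.0.7 in \cite{Lu:SAG} and Proposition 4.6.15 in \cite{Lu:HA},'' so the statement is imported directly from Lurie. Your proposal, by contrast, sketches an actual argument, and it is essentially the standard one: define $\cat{C}^\vee = \Ind(\cat{C}^{0,\opp})$, build $\epsilon$ by ind-extending the Yoneda pairing, build $\eta$ by transporting $\mathrm{id}_{\cat{C}}$ across the identifications $\cat{C}^\vee \otimes \cat{C} \simeq \Fun^L_k(\cat{C},\cat{Vect}_k) \otimes \cat{C} \simeq \Fun^L_k(\cat{C},\cat{C})$, and check the triangle identities on compact objects. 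That is indeed how this is proved, so your route is correct in substance; the difference from the paper is simply that you are unpacking the proof it chooses to cite.

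Two places in your sketch would need more care if this were to stand on its own. First, the ``tensoring-out'' equivalence $\Fun^L_k(\cat{C},\cat{Vect}_k) \otimes \cat{D} \simeq \Fun^L_k(\cat{C},\cat{D})$ for compactly generated $\cat{C}$ is doing almost all of the work --- in a closed symmetric monoidal $\infty$-category this identity is essentially equivalent to dualizability of $\cat{C}$ --- so invoking it as an ingredient is really tantamount to invoking the conclusion. To avoid the appearance of circularity you should derive it independently, via $\Fun^L_k(\Ind(\cat{C}^0), \cat{D}) \simeq \Fun^{ex}_k(\cat{C}^0, \cat{D})$ and the universal property of $\Ind$; once that is in hand, dualizability is a formal consequence and the separate verification of triangle identities becomes unnecessary. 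Second, the colimit $\colim_{Y \in \cat{C}^0} \Hom_{\cat{C}^0}(Y,X) \otimes_k Y$ you invoke in the reduction is not well-posed as written --- it is a coend, not a colimit over the discrete index $Y$ --- and the honest statement is the density theorem $X \simeq \colim_{\cat{C}^0_{/X}} Y$ coming from $\cat{C} \simeq \Ind(\cat{C}^0)$. These are fixable presentational issues, not gaps in the idea, and you correctly anticipate that the genuine subtlety is the coherence of $\epsilon$ and $\eta$ at the $\infty$-categorical level, which you (like the paper) outsource to \cite{Lu:HA} Section 4.8.
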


\begin{defn}
Let $\cat{st}_k$ be the $\infty$-category of small stable $k$-linear $\infty$-categories.  We define the \emph{Hochschild homology} functor
$$HH := \dim \circ \Ind: \cat{st}_k \rightarrow \cat{Fun}^L_k(\cat{Vect}_k, \cat{Vect}_k) \simeq \cat{Vect}_k$$
i.e. $HH(\cat{C})$ is the image of $k$ under the composition
$$\begin{tikzcd}
\cat{Vect}_k \ar[r, "\text{coev}"] & \Fun^L_k(\cat{C}, \cat{C}) \ar[r, "\simeq"] & \cat{C}^\vee \otimes \cat{C} \ar[r, "\text{ev}"] & \cat{Vect}_k.
\end{tikzcd}$$
\end{defn}

\begin{rmk}
If $\cat{C}$ is compactly generated, then $HH(\cat{C}) = HH(\cat{C}^\omega)$, so the two definitions of Hochschild homology given above are compatible.
\end{rmk}

We now seek to understand the $S^1$-action on Hochschild homology.  While there are purely categorical ways to view $S^1$ actions, we model the concretely on chain complexes via a mixed differential.

\begin{defn}
A \emph{mixed complex} is a dg-module over the dg-algebra $H_\bullet(S^1; k) \simeq k[\epsilon]$ where $|\epsilon| = -1$.  Explicitly, it is a chain complex $(V, d)$ with a \emph{mixed differential} $\epsilon$ of cohomological degree $-1$ such that $d\epsilon = \epsilon d$ and $\epsilon^2 = 0$.
\end{defn}

We are interested in the following operations on mixed complexes.
\begin{defn}
We define the \emph{$S^1$-invariants} and \emph{$S^1$-coinvariants functors} by
$$V^{S^1} := R\Hom_{C_\bullet(S^1; k)}(C_\bullet(ES^1; k), V) \simeq (V[[u]], d + u \epsilon) \in k[[u]]\dmod,$$
$$V_{S^1} := V \otimes_{C_\bullet(S^1; k)} C_\bullet(ES^1; k) \simeq (V[u^{-1}], d + u\epsilon) \in k[[u]]\dmod_{u-\text{tors}}.$$
and the \emph{Tate construction} by
$$V^{\Tate} = V^{S^1} \otimes_{k[[u]]} k((u)) = \lim u^k V_{S^1} =  (V((u)), d + u\epsilon) \simeq (V((u)), d + u\epsilon) \in k((u))\dmod.$$
\end{defn}

\begin{rmk}
The action of $C_\bullet(S^1; k)$ on $C_\bullet(ES^1; k)$ comes from the sweep action of chains of \cite{GKM:KD}.  
Taking a presentation of $ES^1$ as a colimit of odd spheres with free $S^1$-actions, the $S^1$-invariants can be expressed as a filtered limit
$$V^{S^1} = \lim_n R\Hom_{C_\bullet(S^1; k)}(C_\bullet(S^{2n+1}; k), V) \simeq  \lim_n \, (V[u]/u^n, d + u\epsilon).$$
\end{rmk}

\begin{rmk}
Note that the $S^1$-invariants operation (as well as the Tate construction)
is not continuous as defined above\footnote{For example, consider $N = \colim_n k[-2n]$.  Since $N$ is acyclic, $N^{S^1} = 0$.  On the other hand, $(k[-2n])^{S^1} = u^{-1}k[[u]]$ so that $\colim (k[-2n])^{S^1} = k((u))$.}, but can be made so by considering mixed complexes in the category $\Ind(k[\epsilon]\dmod_{f.g.})$ instead.   
\end{rmk}

\begin{defn}
We respectively define the \emph{negative cyclic homology}, \emph{cyclic homology} and \emph{periodic cyclic homology} by
$$HN(\cat{C}) := HH(\cat{C})^{S^1}, \;\;\;\;\; HC(\cat{C}) := HH(\cat{C})_{S^1}, \;\;\;\;\; HP(\cat{C}) := HH(\cat{C})^{\Tate}.$$
\end{defn}

In this note we consider two different explicit models of Hochschild homology and its cyclic variants.  One is the usual cyclic bar construction on a small dg category, and the other is via $S^1$-equivariant functions on the derived loop space.

\begin{exmp}[Algebraic model]
Let $A$ be a dg algebra (or more generally, a dg category) over $k$, and $\cat{C} = A\dmod$ the category of left dg-modules over $A$.  For two dg categories $A, B$, an $A^{op} \otimes_k B$-module defines a continuous functor $A\dmod \rightarrow B\dmod$.  By the dg Morita theory of \cite{To:DG}, this functor
$$A^{op} \otimes B\dmod \rightarrow \Fun^L_k(A\dmod, B\dmod)$$
is an equivalence.  Under this equvalence, the coevaluation $k\dmod \rightarrow A \otimes A^{op}\dmod$ corresponds to the functor $A \otimes_k -$, where $A$ is considered as a bimodule over itself, and the evaulation map corresponds to $- \otimes_{A \otimes A^{op}} A$.  
In particular, the Hochschild homology is given by the usual Hochschild homology
$$HH(A\dmod) = HH(A; A) = A \otimes_{A \otimes A^{op}} A.$$
\end{exmp}

\begin{defn}[Cyclic bar complex]\label{ExmpHHdgcat}
The \emph{cyclic nerve} of a small $k$-linear dg category $\cat{C}$ is the cyclic vector space whose $n$-simplices are given by
$$C_n(\cat{C}) = \coprod_{X_0, \ldots, X_n \in \mathrm{Ob}(\cat{C})} \Hom_{\cat{C}}(X_0, X_n) \otimes \Hom_{\cat{C}}(X_n, X_{n-1}) \otimes \cdots \otimes \Hom_{\cat{C}}(X_1, X_0)$$
where the face maps are given by composition, the degeneracy maps by the identity homomorphism, and the cyclic structure by rotation of the terms.  Its associated chain complex, which also abusively denote by $C_\bullet(\cat{C})$, is the \emph{cyclic bar complex} which is naturally a mixed complex with the mixed differential arising via the Connes $B$-operator.  This mixed differential exhibits the $S^1$-action on Hochschild homology \cite{Ho:HH}.
\end{defn}

\begin{rmk}
One can obtain smaller models by taking objects from a set of compact generators (see Theorem 5.2 of \cite{Ke:DG}) rather than all of $\mathrm{Ob}(\cat{C})$; for example, if $\cat{C} = A\dperf$, then the free module $A$ is a compact generator and one recovers the classical cyclic bar complex $C^\bullet(A; A)$.
\end{rmk}


\begin{exmp}[Geometric model]
By \cite{BZFN:IT}, when $X$ is a perfect stack (e.g. a quotient stack of a derived quasiprojective scheme by an affine group in characteristic zero), then $\QCoh(X)$ is compactly generated by $\Perf(X)$ and we have isomorphisms
$$\QCoh(X) \otimes \QCoh(X) \simeq \QCoh(X \times X) \simeq \Fun^L_k(\QCoh(X), \QCoh(X))$$
where the functors on the right are given by integral transforms.  Explicitly, we identify 
$$\begin{tikzcd} \QCoh(X) \ar[r, "\simeq"] &  \QCoh(X)^\vee\end{tikzcd}$$
 on compact objects $\mathcal{K} \in \Perf(X)$ by
$$\mathcal{K} \mapsto \Gamma(X, \mathcal{K} \otimes -) \simeq \Gamma(X, \shHom_X(\mathcal{K}^\vee, -)).$$
Letting $p: X \rightarrow \Spec(k)$ be the map to a point and $\Delta: X \rightarrow X \times X$ the diagonal, the coevaluation is given by the functor $\Delta_* p^*$ and the evaluation by $p_* \Delta^*$.  In particular, we find that the Hochschild homology is
$$HH(\Perf(X)) \simeq p_* \Delta^* \Delta_* p^* k = \Gamma(X, \Delta^* \Delta_* \OO_X) \simeq \Gamma(\LL X, \OO_{\LL X}) = \OO(\LL X)$$
with the last isomorphism arising via base change.  The $S^1$-action on $\OO(\LL X)$ is the $S^1$-equivariant structure arising from loop rotation; for details see Remark 3.2 and Proposition 4.2 in \cite{BZN:NT}.
\end{exmp}

\begin{rmk}
If $X$ is QCA but not perfect, it is not currently known whether $\QCoh(X)$ is compactly generated.  On the other hand, by Theorem 4.3.1 of \cite{DG:QCA}, $\QCoh(X)$ is dualizable, so that
$$HH(\QCoh(X)) = \OO(\LL X)$$
by a similar argument.  It does not appear to be known whether $HH(\Perf(X))$ agrees with $HH(\QCoh(X))$.
\end{rmk}

\subsection{The equivariant cyclic bar construction}

We now define an explicit model for the Hochschild homology of a quotient stack.
\begin{prop}\label{GeneratorQuotientStack}
Let $X$ be a quasiprojective scheme with an action of a reductive group $G$, and let $p: X/G \rightarrow BG$ and $q: X \rightarrow X/G$ be the natural maps of stacks.  Let $\mathcal{E} \in \Perf(X/G)$ be a locally free sheaf such that $q^* \mathcal{E}$ is a compact generator of $\QCoh(X)$.  Define
$$A = p_*R\shHom_X(\mathcal{E}, \mathcal{E}) \in \Alg(\QCoh(BG)).$$
Then, the functor
$$R\Hom_X(\mathcal{E}, -): \QCoh(X/G) \rightarrow A\dmod_{\QCoh(BG)}$$
is an equivalence of dg categories.
\end{prop}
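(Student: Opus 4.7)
The plan is to invoke the $\QCoh(BG)$-enriched Schwede--Shipley theorem (see Theorem 7.1.2.1 and \S 4.8.5 of \cite{Lu:HA}, together with the enriched formulation used in \cite{BZFN:IT}): if $\cat{C}$ is a presentable stable $\QCoh(BG)$-linear $\infty$-category and $\mathcal{E} \in \cat{C}$ is a \emph{$\QCoh(BG)$-relative compact generator}, then $A := \underline{\End}_{\QCoh(BG)}(\mathcal{E})$ is an algebra object of $\QCoh(BG)$ and the enriched Hom functor $\cat{C} \to A\dmod_{\QCoh(BG)}$ is an equivalence. Here $\cat{C} = \QCoh(X/G)$ is $\QCoh(BG)$-linear via $p^*$, the enriched Hom from $\mathcal{E}$ is $p_* R\shHom_{X/G}(\mathcal{E}, -)$, and its enriched endomorphism algebra is the $A$ of the statement. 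Thus the proposition reduces to verifying enriched compactness and enriched conservativity of $\mathcal{E}$.

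For enriched compactness, I would observe that $\mathcal{E}$ being locally free makes $R\shHom_{X/G}(\mathcal{E}, -)$ continuous, so it remains to show that $p_*: \QCoh(X/G) \to \QCoh(BG)$ preserves small colimits. This holds because $G$ is reductive in characteristic zero: the $G$-invariants functor on $\QCoh(BG)$ is exact (Reynolds operator) and of bounded cohomological dimension, so $p$ is a ``safe'' morphism in the sense of \cite{DG:QCA} and hence $p_*$ is continuous.

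For enriched conservativity, consider the Cartesian square
$$\begin{tikzcd}
X \ar[r, "q"] \ar[d, "\bar\pi"'] & X/G \ar[d, "p"] \\
\Spec(k) \ar[r, "\pi"] & BG.
\end{tikzcd}$$
Flat base change (applicable as $\pi$ is faithfully flat) combined with perfection of $\mathcal{E}$ identifies $\pi^* p_* R\shHom_{X/G}(\mathcal{E}, F)$ with $R\Hom_X(q^*\mathcal{E}, q^*F)$. Since $\pi^*$ is conservative, vanishing of $p_* R\shHom_{X/G}(\mathcal{E}, F)$ in $\QCoh(BG)$ forces $R\Hom_X(q^*\mathcal{E}, q^*F) = 0$; compact generation of $\QCoh(X)$ by $q^*\mathcal{E}$ then yields $q^*F = 0$, and faithful flatness of $q$ gives $F = 0$.

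The main obstacle is pinning down the precise form of the enriched Morita statement one wishes to cite; the two geometric inputs above are the essential content. As an alternative one can invoke Barr--Beck--Lurie directly for the adjunction $(- \otimes_A \mathcal{E}) \dashv p_*R\shHom_{X/G}(\mathcal{E}, -)$: the right adjoint is continuous and conservative by the arguments above, and the induced monad on $A\dmod_{\QCoh(BG)}$ is canonically the identity since $A = p_*R\shHom_{X/G}(\mathcal{E}, \mathcal{E})$, yielding the desired equivalence.
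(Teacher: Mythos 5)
Your proof is correct, but it takes a genuinely different route from the paper's. The paper argues directly: it asserts full faithfulness by noting that $q$ is an atlas and $q^*\mathcal{E}$ is a generator (effectively, the counit of the adjunction is an isomorphism after pulling back along the faithfully flat atlas $q$, where it reduces to the non-equivariant Morita equivalence for $\QCoh(X)$), and it checks essential surjectivity on the compact generators $A \otimes V$, $V \in \Irr(G)$, by a direct computation $R\Hom_{X/G}(\mathcal{E}, \mathcal{E} \otimes V) = A \otimes V$. You instead package the argument as an instance of the $\QCoh(BG)$-enriched Schwede--Shipley theorem (equivalently, Barr--Beck--Lurie), reducing the whole statement to two checks: continuity of the enriched Hom $p_* R\shHom_{X/G}(\mathcal{E}, -)$, which you get from perfection of $\mathcal{E}$ and exactness of $G$-invariants for reductive $G$ in characteristic $0$, and conservativity, which you obtain via flat base change along $\pi: \Spec k \to BG$ and the generation hypothesis on $q^*\mathcal{E}$. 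Both proofs exploit the same two geometric facts (reductivity of $G$ and generation of $\QCoh(X)$ by $q^*\mathcal{E}$), but your version is more systematic and makes the role of rigidity of $\QCoh(BG)$ and continuity of $p_*$ explicit, at the cost of invoking heavier machinery; the paper's version is shorter but leaves the verification of full faithfulness implicit. One small caution: in your Barr--Beck formulation, the claim that the induced monad on $A\dmod_{\QCoh(BG)}$ is the identity ``since $A = p_*R\shHom_{X/G}(\mathcal{E}, \mathcal{E})$'' is a bit quick --- you also need continuity and $\QCoh(BG)$-linearity of the enriched Hom functor to propagate the identification from free modules $A \otimes V$ to all of $A\dmod_{\QCoh(BG)}$ by colimits, which is exactly where the continuity verification you gave earlier is used again.
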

\begin{proof}
Since $X$ is quasiprojective, it admits an equivariant compact generator $\mathcal{E}$ of $\Perf(X)$ (not $\Perf(X/G)$).  The functor is fully faithful since $q$ is an atlas and $q^*\mathcal{E}$ is a generator.  It is essentially surjective, since $A\dperf_{\QCoh(BG)}$ is generated by $A \otimes V$ for $V \in \Irr(G)$, and $R\Hom_{X/G}(\mathcal{E}, \mathcal{E} \otimes V) = A \otimes V$.
\end{proof}

We need the following formula for $G$-representations.
\begin{prop}\label{BGProjForm}
 Let $G$ be a reductive affine algebraic group, and $V, W$ two rational representations of $G$ (i.e. $V, W \in \QCoh(BG)$).  Then, there is a natural equivalence
$$\begin{tikzcd}\displaystyle\bigoplus_{U \in \Irr(G)} (V \otimes U^*)^G \otimes (U \otimes W)^G \arrow[r, "\simeq"] & (V \otimes W)^G .\end{tikzcd}$$
More generally, for $V_0, \ldots, V_n$ rational representations of $G$, there is a natural equivalence
$$
\begin{tikzcd}
\displaystyle\bigoplus_{U_1, \ldots, U_n \in \Irr(G)} (V_0 \otimes U_1^*)^G \otimes (U_1 \otimes V_1 \otimes U_2^*)^G \otimes \cdots \otimes (U_n \otimes V_n)^G \arrow[r, "\simeq"] & (V_0 \otimes \cdots \otimes V_n)^G.\end{tikzcd}$$
\end{prop}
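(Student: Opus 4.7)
The plan is to reduce the general $n$-fold claim to the binary case $n=1$ by a straightforward induction, and to prove the binary case via the Peter--Weyl isotypic decomposition for reductive groups.

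For the binary case, recall that since $G$ is reductive and $k$ has characteristic zero, the category of rational $G$-representations is semisimple, so every $V \in \QCoh(BG)$ admits a canonical isotypic decomposition
$$V \simeq \bigoplus_{U \in \Irr(G)} U \otimes_k (V \otimes U^*)^G$$
in which $G$ acts trivially on the multiplicity spaces $(V \otimes U^*)^G = \Hom_G(U, V)$, and the map is the sum of evaluations $U \otimes \Hom_G(U, V) \to V$. Tensoring this $G$-equivariant isomorphism with $W$ yields a $G$-equivariant decomposition of $V \otimes W$; applying $(-)^G$, which is exact and preserves direct sums since $G$ is reductive in characteristic zero, produces the binary form of the claim (so that the second factor is $(U \otimes W)^G$, consistent with the last factor in the $n$-fold version).

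For the general statement, I induct on $n$. Apply the binary formula to $V_0$ and $W := V_1 \otimes \cdots \otimes V_n$ to peel off the outer sum over $U_1 \in \Irr(G)$, producing
$$(V_0 \otimes \cdots \otimes V_n)^G \simeq \bigoplus_{U_1} (V_0 \otimes U_1^*)^G \otimes (U_1 \otimes V_1 \otimes V_2 \otimes \cdots \otimes V_n)^G.$$
Then apply the inductive hypothesis to the remaining factor, viewing it as an $n$-fold tensor product with first term $U_1 \otimes V_1$ and subsequent terms $V_2, \ldots, V_n$. Rebracketing the resulting sums and tensor factors recovers precisely the asserted decomposition indexed by $(U_1, \ldots, U_n)$.

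The only content is semisimplicity of $\QCoh(BG)$ for reductive $G$ in characteristic zero, together with exactness of $(-)^G$; since this exactness persists at the level of chain complexes, no derived corrections are needed and the statement is automatic in the $\infty$-categorical setting. Accordingly, there is no substantive obstacle, and the main verification is just bookkeeping of the iterated isotypic decomposition.
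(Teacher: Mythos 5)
Your proof is correct, and the underlying content is the same as the paper's: both arguments rest on the Peter--Weyl decomposition and exactness of $G$-invariants in characteristic zero, and both derive the $n$-fold claim by iterating the binary case. The only difference is packaging: you apply the isotypic decomposition $V \simeq \bigoplus_U U \otimes (V \otimes U^*)^G$ directly and then tensor with $W$, whereas the paper phrases the binary case as an instance of the projection formula for $\Delta: BG \to BG \times BG$ applied to $\Delta_*\OO_{BG} = k[G] = \bigoplus_U U^* \otimes U$. The projection-formula phrasing makes the comparison map canonical (and hence the naturality manifest) with no extra effort, whereas in your version naturality is a consequence of the naturality of the isotypic decomposition itself --- both are fine, and there is no substantive gap.
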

\begin{proof}
Consider $V \boxtimes W \in \QCoh(BG \times BG)$ as a $G \times G$-representation.  The projection formula defines an equivalence
$$\Gamma(BG \times BG, \Delta_* \OO_{BG} \otimes (V \boxtimes W)) \rightarrow \Gamma(BG, \Delta^*(V \boxtimes W)).$$
The result follows by unwinding this equivalence using the Peter-Weyl theorem for reductive algebraic groups $\Delta_* \OO_{BG} = k[G] = \bigoplus_{U \in \Irr(G)} U^* \otimes U$, and noting that taking global sections amounts to taking $G$ or $G \times G$-invariants (note that there are no higher cohomology groups since $G$ is reductive).  The second claim results from iterating the first, or by applying the projection formula to the diagonal $BG \rightarrow BG^{\times n}$.
\end{proof}

\begin{rmk}
 The map above, which we now denote $\phi$, and its inverse $\psi$ can be written out explicitly:
 $$\phi(v_0 \otimes u_1^* \otimes u_1 \otimes v_1 \otimes u_2^* \otimes \cdots \otimes u_n \otimes v_n) = \left( u_1^*(u_1) \cdots u_n^*(u_n) \right) \cdot v_0 \otimes \cdots \otimes v_n$$
 $$\psi(v_0 \otimes \cdots \otimes v_n) = \pi\left(\sum_{U_1, \ldots, U_n \in \Irr(G)} v_0 \otimes \eta_1 \otimes v_1 \otimes \eta_2 \otimes \cdots \otimes \eta_n \otimes v_n\right)$$
 where $\eta_i \in U_i^* \otimes U_i$ is the identity map, and $\pi = (-)^G \otimes \cdots \otimes (-)^G$ is the tensor products of the projections to the $G$-invariant isotypic component.
\end{rmk}

\begin{defn}
Let $A \in \Alg(\QCoh(BG))$ be an associated algebra object with coaction map $c: A \rightarrow A \otimes k[G]$.  The \emph{equivariant cyclic bar complex} $C_\bullet(A, G)$, defined in \cite{BG:ECH}, is the mixed complex associated to the following cyclic vector space.  We define the $n$-simplices by
$$C_n(A, G) = (A^{\otimes n+1} \otimes k[G])^G$$
with face and degeneracy maps
$$d_i(a_0 \otimes \cdots \otimes a_n \otimes f) = a_0 \otimes \cdots \otimes a_i a_{i+1} \otimes \cdots \otimes a_n \otimes f\;\;\;\;\;\;\; i=0, \ldots, n-1,$$
$$d_n(a_0 \otimes \cdots \otimes a_n \otimes f) = c(a_n) a_0 \otimes \cdots \otimes f,$$
and cyclic structure
$$t(a_0 \otimes \cdots \otimes a_n \otimes f) = c(a_n) \otimes a_0 \otimes \cdots \otimes a_{n-1} \otimes f,$$
$$s_{n+1}(a_0 \otimes \cdots \otimes a_n \otimes f) = 1 \otimes a_0 \otimes \cdots \otimes a_n \otimes f.$$
\end{defn}

\begin{rmk}
Note that it is essential to take $G$-invariants for $C_\bullet(A, G)$ to admit a cyclic sructure.  In particular, if $a_0 \otimes \cdots \otimes a_n \otimes f$ is $G$-invariant, then 
$$c(a_0) \otimes \cdots \otimes c(a_n) \otimes f = a_0 \otimes \cdots \otimes a_n \otimes f.$$
\end{rmk}

\begin{prop}
The equivariant cyclic bar complex $C_\bullet(G, A)$ computes $HH(\Perf(A\dmod_{\QCoh(BG)})).$
\end{prop}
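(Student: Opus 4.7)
The plan is to compute $HH(\Perf(A\dmod_{\QCoh(BG)}))$ directly from the cyclic nerve construction of Definition \ref{ExmpHHdgcat}, evaluated on a tractable set of compact generators. Since $G$ is reductive, the irreducible representations $\{V : V \in \Irr(G)\}$ generate $\QCoh(BG)^\omega = \Perf(BG)$, and applying the free-module functor $A \otimes -$ shows that $\{A \otimes V : V \in \Irr(G)\}$ is a set of compact generators for $A\dperf_{\QCoh(BG)}$. The free-forgetful adjunction together with $\Hom_{\QCoh(BG)}(V, M) = (V^* \otimes M)^G$ identifies the morphism spaces as
$$\Hom_{A\dperf_{\QCoh(BG)}}(A \otimes V, A \otimes W) \simeq (V^* \otimes A \otimes W)^G.$$

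Writing the $n$-simplices of the cyclic nerve on these generators yields
$$C_n \;=\; \bigoplus_{V_0, \ldots, V_n \in \Irr(G)} (V_0^* \otimes A \otimes V_n)^G \otimes (V_n^* \otimes A \otimes V_{n-1})^G \otimes \cdots \otimes (V_1^* \otimes A \otimes V_0)^G.$$
I would collapse this sum in three stages using Proposition \ref{BGProjForm}. Summing over the internal indices $V_1, \ldots, V_{n-1}$ with $V_0, V_n$ fixed identifies the product of the inner factors with $(V_n^* \otimes A^{\otimes n} \otimes V_0)^G$. Summing next over $V_0$, after swapping tensor factors inside the $G$-invariants, yields $(A \otimes V_n \otimes V_n^* \otimes A^{\otimes n})^G$. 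Finally summing over $V_n$ and invoking Peter-Weyl $k[G] = \bigoplus_{V \in \Irr(G)} V \otimes V^*$ produces
$$C_n \;\simeq\; (A^{\otimes n+1} \otimes k[G])^G,$$
degreewise matching the equivariant cyclic bar complex $C_\bullet(A, G)$.

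It remains to check that the inherited face, degeneracy, and cyclic structure maps translate under these identifications into the combinatorial formulas of the equivariant cyclic bar complex. The inner face maps $d_i$ for $i < n$ come from composition of consecutive morphisms, which under the identifications above is multiplication of adjacent copies of $A$ via the evaluation pairings $V_i^* \otimes V_i \to k$ introduced by the composition; the degeneracy maps correspond to insertion of the identity, matching the unit $1 \in A$. These are routine. The main obstacle is the cyclic rotation $t$ and the last face $d_n$: rotating the cycle of $\Hom$s reshuffles which $V_i$ plays the role of the ``outer'' Peter-Weyl index, and re-expressing the rotated simplex in the standard normal form $(A^{\otimes n+1} \otimes k[G])^G$ absorbs the resulting evaluation pairing into the Peter-Weyl summand $\bigoplus_V V \otimes V^*$, producing exactly the twist by the coaction $c: A \to A \otimes k[G]$ that appears in $d_n$ and $t$. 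I would verify this match by direct computation on pure tensors using the explicit formulas $\phi, \psi$ of the remark following Proposition \ref{BGProjForm}, tracking how the wrap-around pairing reassembles into the coaction.
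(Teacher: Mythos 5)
Your proposal is correct and follows essentially the same route as the paper: both use the generating set $\{A\otimes V : V\in\Irr(G)\}$, the identification $\Hom_A(A\otimes V, A\otimes W)^G\simeq (V^*\otimes A\otimes W)^G$, and Proposition \ref{BGProjForm} plus Peter--Weyl to identify the cyclic nerve degreewise with $(A^{\otimes n+1}\otimes k[G])^G$. The paper performs the collapse in one stroke (fixing $V_0$ and applying Proposition \ref{BGProjForm}) and leaves the cyclic-structure compatibility to the reader, whereas you collapse in stages and sketch the verification of the face/degeneracy/rotation maps; this is a cosmetic difference, not a different argument.
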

\begin{proof}
Since $A$ generates $A\dperf$, the objects $A \otimes V_\lambda$ generate $A\dperf_{\QCoh(BG)}$ and we have the following ``small'' model for $HH(A\dperf_{\QCoh(BG)})$.  The cyclic bar complex corresponding to the generating set $\{A \otimes V \mid V \in \Irr(G)\}$ of $A\dperf_{\QCoh(BG)}$ has terms
$$D_n(A, G) = \displaystyle\bigoplus_{V_0, \ldots, V_n \in \Irr(G)} \Hom_A(A \otimes V_0, A \otimes V_1)^G \otimes \Hom_A(A \otimes V_1, A \otimes V_2)^G \otimes \cdots \otimes \Hom_A(A \otimes V_{n-1}, A \otimes V_n)^G.$$
Using the natural isomorphism $\Hom_A(A \otimes V, A \otimes W)^G = (A \otimes V^* \otimes W)^G$, we rewrite:
$$D_n(A, G) = \displaystyle\bigoplus_{V_0, \ldots, V_n \in \Irr(G)} (V_0^* \otimes A \otimes V_1)^G \otimes (V_1^* \otimes A \otimes  V_2)^G \otimes \cdots \otimes ( V_n^* \otimes A \otimes V_0)^G.$$
Next, we can rewrite $C^n(A, G)$ using the Peter-Weyl theorem for reductive algebraic groups:
$$C_n(A, G) = \bigoplus_{V_0 \in \Irr(G)} (V_0 \otimes^* A^{\otimes n} \otimes V_0)^G.$$
Applying Proposition \ref{BGProjForm} to each summand (i.e. for fixed $V_0$) produces an equivalence $D_n(A, G) \rightarrow C_n(A, G)$; the claim that it defines a map of cyclic objects is left to the reader.
\end{proof}

\begin{rmk}\label{EquivariantCyclicProjective}
The equivariant cyclic bar complex $C_\bullet(X, G)$ provides us with an explicit model of the Hochschild homology $HH(\Perf(X/G))$ as a $k[G]^G$-linear mixed complex (i.e. both the internal and mixed differentials are $k[G]^G$-linear).  Furthermore, since $(-)^G$ is a left adjoint functor whose right adjoint preserves epimorphisms, the terms in $C_\bullet(A, G)$ are also projective.
\end{rmk}

\subsection{Formal and derived completions}\label{FormalComplSection}

We review the notion of formal completions of derived stacks and the notion of derived completion in the derived category. This section is essentially a summary of the results in Chapter 4 of \cite{Lu:DAGXII}, Section 3.4 of \cite{BS:PE}, Section 15.80 of the \cite{SP}, and Chapter 6 of \cite{GR:DGI}.

\begin{defn}
Let $f: X \rightarrow Y$ be a map of derived stacks (or more generally, prestacks).  The \emph{formal completion} of $f$, written $\widehat{Y_X}$, is a prestack whose functor-of-points whose $S$-points are given by diagrams
$$\begin{tikzcd}
\pi_0(S)^{red} \arrow[r] \arrow[d] & X \arrow[d] \\
S \arrow[r] & Y.
\end{tikzcd}$$
It can also be defined via the fiber product
$$\widehat{Y_X} := Y \times_{Y^{\dR}} X^{\dR}.$$
\end{defn}

\begin{lemma}\label{FormalNbhd}
The formal completion of a map $X \rightarrow Y$ only depends on $\pi_0(X)^{red} \rightarrow Y$.  In particular, if $Z \rightarrow Y$ is a closed embedding, then $\widehat{Y_Z} \times_Y X = \widehat{X_{\pi_0(Z \times_Y X)^{red}}}$.
\end{lemma}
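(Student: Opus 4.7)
The plan is to work entirely from the pullback characterization $\widehat{Y_X} = Y \times_{Y^{\dR}} X^{\dR}$. Two properties of the de Rham functor do all the work: first, by its very definition $X^{\dR}(S) = X(\pi_0(S)^{red})$, the functor $(-)^{\dR}$ factors through the passage to underlying reduced classical stack; second, again by inspection of the functor of points, $(-)^{\dR}$ preserves limits, in particular fiber products. Nothing in the argument uses anything deeper.

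For the first assertion, observe that since $\pi_0(S)^{red}$ is already a reduced classical scheme, a map $\pi_0(S)^{red} \to X$ factors uniquely through $\pi_0(X)^{red} \hookrightarrow X$. Hence the canonical map $(\pi_0(X)^{red})^{\dR} \to X^{\dR}$ is an equivalence, and substituting into the defining pullback gives $\widehat{Y_X} \simeq \widehat{Y_{\pi_0(X)^{red}}}$.

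For the second assertion I would execute a short diagram chase. Unwinding the left-hand side,
$$\widehat{Y_Z} \times_Y X \;=\; (Y \times_{Y^{\dR}} Z^{\dR}) \times_Y X \;\simeq\; X \times_{Y^{\dR}} Z^{\dR}.$$
Writing $W := Z \times_Y X$ and using the first part of the lemma to absorb the $\pi_0(-)^{red}$, the right-hand side becomes
$$\widehat{X_{\pi_0(W)^{red}}} \;=\; \widehat{X_W} \;=\; X \times_{X^{\dR}} W^{\dR}.$$
Commuting $(-)^{\dR}$ with the defining pullback of $W$ gives $W^{\dR} \simeq Z^{\dR} \times_{Y^{\dR}} X^{\dR}$, and then the projection formula (equivalently, collapsing $X \times_{X^{\dR}} X^{\dR} \simeq X$ inside the iterated fiber product) yields $X \times_{X^{\dR}} (Z^{\dR} \times_{Y^{\dR}} X^{\dR}) \simeq X \times_{Y^{\dR}} Z^{\dR}$, matching the left-hand side.

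There is no real obstacle here; the only non-tautological step is the commutation of $(-)^{\dR}$ with fiber products, and this is immediate from the pointwise formula $X^{\dR}(S) = X(\pi_0(S)^{red})$. The hypothesis that $Z \to Y$ is a closed embedding is not used in the formal manipulation but ensures that $\pi_0(Z \times_Y X)^{red}$ is an honest closed reduced substack of $X$ so that the right-hand side matches the intuitive notion of the formal neighborhood of $Z \times_Y X$ in $X$.
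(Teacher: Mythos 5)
Your proof is correct and takes essentially the same route as the paper: both turn on the universal property of $\pi_0(X)^{red}$ as the terminal object receiving maps from reduced classical (affine) schemes, yielding the equivalence $(\pi_0(X)^{red})^{\dR} \simeq X^{\dR}$, and both then unwind the definition of $\widehat{Y_X}$. The paper phrases this as a one-line functor-of-points observation, whereas you carry out the same content explicitly via the presentation $\widehat{Y_X} = Y \times_{Y^{\dR}} X^{\dR}$ and the fact that $(-)^{\dR}$ commutes with fiber products; the diagram chase in the second assertion is a faithful expansion of what the paper leaves implicit.
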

\begin{proof}
This follows directly from the functor-of-points characterization of formal completions, and that $\pi_0(X)^{red}$ is the universal stack that canonically factors any map from a classical reduced scheme $\pi_0(S)^{red}$.
\end{proof}

\begin{defn}
Let $f: A \rightarrow B$ be a map of derived rings.  Following \cite{Lu:DAGIV}, we say that $f$ is \emph{\'{e}tale} if the induced map $\pi_0(A) \rightarrow \pi_0(B)$ is \'{e}tale and for every $n \in \mathbb{Z}$, the map $\pi_n(A) \otimes_{\pi_0(A)} \pi_0(B) \rightarrow \pi_n(B)$ is an isomorphism of abelian groups.  A map of derived schemes is \'{e}tale if it is for a Zariski cover, and a representable map of derived Atin stacks is \'{e}tale if it is after base change to a cover.
\end{defn}

\begin{prop}\label{EtaleCotangent}
Let $X, Y, Z$ be stacks admitting deformation theory\footnote{This notion is defined in Chapter IV of \cite{GR:DAG} and is satisfied by quotient stacks.}.  Suppose that $f: X \rightarrow Y$ is \'{e}tale, and let $Z \rightarrow X$ be any map.  Then, the relative cotangent complex vanishes $\mathbb{L}_{X/Y} \simeq 0$.  Furthermore, the map on formal completions $\widehat{X}_Z \rightarrow \widehat{Y}_Z$ is an equivalence.
\end{prop}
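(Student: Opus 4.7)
The first claim that $\mathbb{L}_{X/Y} \simeq 0$ is standard and local in nature. By representability of $f$ and base change, one reduces to an \'{e}tale map of derived schemes, and then by Zariski locality of the cotangent complex to an \'{e}tale map $A \to B$ of derived rings. In this setting, the vanishing of $\mathbb{L}_{B/A}$ follows from the characterization of \'{e}taleness via the homotopy groups and standard transitivity/base-change properties of the cotangent complex (see, e.g., \cite{Lu:HA} \S 7.5 and \cite{Lu:DAGIV}). Descending along an atlas of $X$, the pullback of $\mathbb{L}_{X/Y}$ agrees with the cotangent complex of the corresponding \'{e}tale map of schemes and so vanishes.

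For the equivalence of formal completions, I plan to work at the level of functors of points. Unwinding the definition, an $S$-point of $\widehat{X}_Z$ (respectively $\widehat{Y}_Z$) consists of a map $S \to X$ (respectively $S \to Y$) together with a factorization $\pi_0(S)^{red} \to Z$ of the induced composite. The fiber of $\widehat{X}_Z(S) \to \widehat{Y}_Z(S)$ over a point $(S \to Y,\, \pi_0(S)^{red} \to Z)$ is therefore the space of lifts $S \to X$ of $S \to Y$ extending the given map $\pi_0(S)^{red} \to Z \to X$ on classical reduced points. The task reduces to showing this space of lifts is contractible.

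To prove contractibility, I would decompose the closed immersion $\pi_0(S)^{red} \hookrightarrow S$ as a tower of square-zero extensions: first the classical nilpotent thickening $\pi_0(S)^{red} \hookrightarrow \pi_0(S)$ (a finite sequence of square-zero steps using the filtration by powers of the nilradical), and then the Postnikov tower $\pi_0(S) \leftarrow \tau_{\leq 1} S \leftarrow \tau_{\leq 2} S \leftarrow \cdots$, each stage of which is square-zero. Since $X$ and $Y$ admit deformation theory in the sense of Chapter IV of \cite{GR:DAG}, the obstruction and the space of lifts at each such stage are controlled by mapping complexes into $\mathbb{L}_{X/Y}$, which vanishes by the first part. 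Hence each stage is contractible, and passing to the (co)limit the total space of lifts is contractible.

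The main technical obstacle is the careful bookkeeping of this inductive argument for stacks rather than affine schemes, including ensuring convergence along the Postnikov tower for stacks admitting deformation theory. This is handled cleanly by the cotangent-complex formalism of \emph{loc.~cit.}; alternatively, one can simply cite the general fact that an \'{e}tale map of derived stacks admitting deformation theory is formally \'{e}tale (unique lifting against nilpotent closed immersions), which immediately yields the claimed equivalence on formal completions.
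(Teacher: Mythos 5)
Your proof is correct, and the last alternative you mention (\'{e}tale $\Rightarrow$ formally \'{e}tale $\Rightarrow$ unique lifting along nilpotent thickenings) is the cleanest way to phrase the argument you are making. However, it takes a genuinely different route from the paper's proof.

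The paper does not unwind functors of points and run an obstruction-theoretic induction over the Postnikov tower of an arbitrary test scheme $S$. Instead it works with $Z$ directly: from the transitivity triangle $g^*\mathbb{L}_{X/Y} \to \mathbb{L}_{Z/Y} \to \mathbb{L}_{Z/X}$ (for $g: Z \to X$) and the vanishing $\mathbb{L}_{X/Y} \simeq 0$, one gets $\mathbb{L}_{Z/Y} \simeq \mathbb{L}_{Z/X}$, and this identification is compatible with the structure maps from $\mathbb{L}_Z$. The paper then cites the result of \cite{GR:DAG} (Chapter IV.5) that the formal completion of a map $Z \to X$ is canonically built as a colimit of square-zero extensions whose controlling data is exactly the map $\mathbb{L}_Z \to \mathbb{L}_{Z/X}$; since this data agrees for $X$ and $Y$, the completions agree. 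Your version replaces this ``intrinsic'' comparison with a pointwise lifting argument: decompose $\pi_0(S)^{\mathrm{red}} \hookrightarrow S$ into square-zero stages, observe that at each stage the space of lifts is a torsor over $\Map(\mathbb{L}_{X/Y}, -) \simeq 0$, and conclude contractibility of the fiber. The two approaches are parallel (both reduce to ``formal completion is governed by cotangent-complex data''), but the paper's is a single citation to a structural theorem about how $\widehat{X}_Z$ is built, whereas yours carries out the deformation-theoretic induction on the test object and so requires more bookkeeping (convergence along the Postnikov tower, compatibility of the classical nilpotent filtration with the derived one). Either suffices; the paper's is shorter, yours is more self-contained and perhaps more transparent if the reader does not want to unpack the GR formalism for formal completions.
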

\begin{proof}
The first sentence is Proposition 2.22 in \cite{Lu:DAGIV}.  For the second, by the exact triangle for cotangent complexes we have a natural isomorphism $\mathbb{L}_{Z/Y} \simeq \mathbb{L}_{Z/X}$ under $\mathbb{L}_Z$, and note that the formal completion of a map $Z \rightarrow X$ is a colimit of square-zero extensions controlled by the the map between cotangent complexes $\mathbb{L}_Z \rightarrow \mathbb{L}_{Z/X}$ (see Chapter IV.5 in \cite{GR:DAG}).
\end{proof}

We now discuss how to compute the derived completion of a quasicoherent complex in the derived category.  It is defined abstractly a the left adjoint to an inclusion functor of complete  objects which we now define.
\begin{defn}
Let $A$ be a connective dg ring, and fix an ideal $I \subset \pi_0(A)$.  We define the full subcategory $A\dmod_{nil}$ of \emph{$I$-nilpotent} objects consisting of those modules on which $I$ acts locally nilpotently, i.e. for each cycle $m \in H^\bullet(M)$ there is some power of $I$ which annihilates $m$.  By Proposition 4.1.12 and 4.1.15 in \cite{Lu:DAGXII}, the inclusion $A\dmod_{nil} \hookrightarrow A\dmod$ is continuous and preserves compact objects; therefore it has a continuous right adjoint $\Gamma_I$, which we call the \emph{local cohomology functor}.  We define the full subcategory $A\dmod_{loc}$ of \emph{$I$-local} objects as the right orthogonal to $A\dmod_{nil}$, and the full subcategory $A\dmod_{cpl}$ of \emph{$I$-complete} modules to be the right orthogonal to $A\dmod_{loc}$.  The subcategory $A\dmod_{cpl}$ has an equivalent characterization as those modules such that the homotopy (derived) limit 
$$\begin{tikzcd}
\cdots \arrow[r, "x"] & M \arrow[r, "x"] & M \arrow[r, "x"]&M
\end{tikzcd}$$ 
is zero for all $x \in I$.  By Proposition 4.2.2 of \cite{Lu:DAGXII}, the inclusion of the complete objects has a left adjoint, which we call the (derived) \emph{completion functor} and denote $\widehat{(-)}$.
\end{defn}

The following is Proposition 4.2.5 in \cite{Lu:DAGXII}.
\begin{prop}
The composition of left adjoints
$$\begin{tikzcd}
A\dmod_{nil} \arrow[r, hook] &  A\dmod \arrow[r, "\widehat{(-)}"] & A\dmod_{cpl}
\end{tikzcd}$$
is an equivalence.  Consequently, the composition of its right adjoints
$$\begin{tikzcd}
A\dmod_{cpl} \arrow[r, hook] &  A\dmod \arrow[r, "\Gamma"] & A\dmod_{nil}
\end{tikzcd}$$
is also an equivalence.
\end{prop}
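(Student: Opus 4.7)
The plan is to use the two adjunctions — $\Gamma_I$ right adjoint to $A\dmod_{nil} \hookrightarrow A\dmod$ and $\widehat{(-)}$ left adjoint to $A\dmod_{cpl} \hookrightarrow A\dmod$ — together with the defining orthogonality relations $A\dmod_{loc} = A\dmod_{nil}^\perp$ and $A\dmod_{cpl} = A\dmod_{loc}^\perp$ to verify that the composites $F := \widehat{(-)} \circ i$ and $G := \Gamma_I \circ j$ are mutually inverse equivalences, where $i, j$ are the respective inclusions.

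First I would record two basic vanishings on local objects. For $P \in A\dmod_{loc}$: by adjunction $\Hom(N, \Gamma_I P) = \Hom(N, P) = 0$ for every nilpotent $N$, so $\Gamma_I P = 0$ as it represents the zero functor on $A\dmod_{nil}$. Dually, $\Hom(\widehat{P}, Q) = \Hom(P, Q) = 0$ for every complete $Q$, and applying this with $Q = \widehat{P} \in A\dmod_{cpl}$ forces $\widehat{P} = 0$.

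Next I would apply $\widehat{(-)}$ to the torsion-localization fiber sequence $\Gamma_I M \to M \to L_I M$, using $\widehat{L_I M} = 0$, to obtain $\widehat{\Gamma_I M} \simeq \widehat{M}$; dually, I would apply $\Gamma_I$ to the local-completion fiber sequence $L'_I M \to M \to \widehat{M}$ coming from the completion unit, using $\Gamma_I L'_I M = 0$, to obtain $\Gamma_I M \simeq \Gamma_I \widehat{M}$. Specializing the first to $N \in A\dmod_{cpl}$ gives $F(G(N)) = \widehat{\Gamma_I N} \simeq \widehat{N} \simeq N$; specializing the second to $M \in A\dmod_{nil}$ gives $G(F(M)) = \Gamma_I \widehat{M} \simeq \Gamma_I M \simeq M$. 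Hence $F$ and $G$ are mutually inverse, proving the first assertion.

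The main subtlety, and the step I expect to be the substantive obstacle, is identifying the fiber $L'_I M$ of the completion unit as an object of $A\dmod_{loc}$: a priori it lies only in ${}^\perp A\dmod_{cpl}$. The crucial input is therefore that in the recollement-style setup provided by Propositions 4.1.12 and 4.2.2 in \cite{Lu:DAGXII} — which hinges on $A\dmod_{nil}$ being closed under colimits so that the smashing localization $L_I$ is well-behaved — one has the equality ${}^\perp A\dmod_{cpl} = A\dmod_{loc}$. Granting this recollement structure, the corresponding equivalence of right adjoints in the second statement follows formally, since right adjoints of inverse equivalences are themselves inverse equivalences.
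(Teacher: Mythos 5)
The paper gives no proof of this proposition; it is cited verbatim from Proposition 4.2.5 of \cite{Lu:DAGXII}, so there is no in-paper argument to compare against. Your argument is correct and is essentially the standard one: the two orthogonality vanishings are right, the fiber sequences $\Gamma_I M \to M \to L_I M$ and $L'_I M \to M \to \widehat{M}$ behave as claimed, and you correctly single out the one nontrivial point, namely that the fiber $L'_I M$ of the completion unit --- a priori only in ${}^\perp A\dmod_{cpl}$ --- must in fact be $I$-local so that $\Gamma_I$ kills it.

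Since you flag that step as the substantive obstacle but leave it as a black box, it is worth making explicit. Because the inclusion of nilpotent objects preserves compact objects, $\Gamma_I$ is continuous, so $A\dmod_{loc} = \ker\Gamma_I$ is closed under colimits as well as limits; hence the inclusion $A\dmod_{loc} \hookrightarrow A\dmod$ admits a right adjoint $R$ in addition to the left adjoint $L_I$. The cofiber sequence of the coreflection counit, $RM \to M \to CM$, then has $RM$ local and $CM$ complete (the latter seen by mapping in any local $P$ and using the adjunction for $R$). If $M \in {}^\perp A\dmod_{cpl}$, applying $\Hom(-, CM)$ to this sequence and using $\Hom(RM[1], CM)=0$ together with $\Hom(M, CM)=0$ forces $\Hom(CM, CM)=0$, so $CM=0$ and $M \simeq RM$ is local. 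This gives ${}^\perp A\dmod_{cpl} = A\dmod_{loc}$, hence $\Gamma_I L'_I M = 0$, closing the gap you noted. Your deduction of the second assertion from the first, via uniqueness of adjoints, is also correct.
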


\begin{exmp}
Let $A = k[x]$ and $I = (x)$, then $k[[x]]$ is $I$-complete, $k[x,x^{-1}]/k[x]$ is $I$-nilpotent, $k[x]$ is neither, and $k[x]/x^n$ is both.  Furthermore,
$$\Gamma_I(k[[x]]) = \Gamma_I(k[x]) = k[x,x^{-1}]/k[x][-1] \;\;\;\;\;\;\;\; \widehat{k[x,x^{-1}]/k[x]} = \widehat{k[x][1]} = k[[x]][1].$$
\end{exmp}

The derived completion and local cohomology functors can each be computed in two ways.  The following can be found as Propositions 15.80.10 and 15.80.17 in \cite{SP} and in a global form as Proposition 3.5.1 in \cite{BS:PE} and Proposition 6.7.4 in \cite{GR:DGI}.  The statements on local cohomology are well known (and which we will not use).
\begin{prop}\label{DerivedCompletion}
Assume $\pi_0(A)$ is a noetherian ring, and choose generators $f_1, \ldots, f_r$ of $I \subset \pi_0(A)$.  We define the complex
$$G^\bullet = \left( \begin{tikzcd} \pi_0(A) \arrow[r] & \prod \pi_0(A)[\frac{1}{f_i}] \arrow[r] & \prod_{i, j} \pi_0(A)[\frac{1}{f_i}, \frac{1}{ f_j}] \arrow[r] & \cdots \arrow[r] & \pi_0(A)[\frac{1}{f_1}, \ldots, \frac{1}{f_r}]\end{tikzcd}\right).$$
The derived completion of an $A$-module $M$ can be computed in two ways:
$$\widehat{M} = R\lim_n M \otimes_{\pi_0(A)}^L A/(f_1^n, \ldots, f_r^n) \simeq R\Hom_{\pi_0(A)}(G^\bullet, M).$$
Likewise, we can compute the local cohomology of $M$ by
$$\Gamma_I(M) = \colim_n R\Hom_{\pi_0(A)}(M/(f_1^n, \ldots, f_r^n), M) \simeq G^\bullet \otimes^L_{\pi_0(A)} M.$$
The latter formula is the calculation of local cohomology via a Cech resolution with supports on an affine scheme.
\end{prop}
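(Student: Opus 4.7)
The plan is to reduce to the single-generator case $r = 1$, use Koszul self-duality to compare the two formulas, and then identify them with the abstract completion and local cohomology via a universal property argument.

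The reduction exploits the tensor decomposition $G^\bullet \simeq G^\bullet_{f_1} \otimes_{\pi_0(A)} \cdots \otimes_{\pi_0(A)} G^\bullet_{f_r}$, where $G^\bullet_{f_i} = [\pi_0(A) \to \pi_0(A)[1/f_i]]$ sits in cohomological degrees $0$ and $1$, together with the analogous Koszul decomposition $A/(f_1^n, \ldots, f_r^n) \simeq K(f_1^n) \otimes_A \cdots \otimes_A K(f_r^n)$ for $K(f^n) = [A \xrightarrow{f^n} A]$ in degrees $-1$ and $0$. Since $R\Hom$ turns tensor products into iterated $R\Hom$'s, and $R\lim$/$\colim$ commute past tensoring by perfect complexes, it suffices to treat the single-generator case.

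For a single $f$, the key identification is $\colim_n K(f^n)[-1] \simeq G^\bullet_f$. This is seen degree-wise from the transition maps $K(f^{n+1}) \to K(f^n)$ (lifting the projections $A/f^{n+1} \to A/f^n$), which act by multiplication by $f$ in degree $-1$ and the identity in degree $0$: in the colimit, degree $0$ stabilizes to $A$, degree $1$ becomes $\colim(A \xrightarrow{f} A \xrightarrow{f} \cdots) = A[1/f]$, and the differentials $f^n$ pass to the natural localization inclusion. Combining with Koszul self-duality $R\Hom_A(K(f^n), A) \simeq K(f^n)[-1]$ and the adjunction between $\colim$ and $R\lim$ under $R\Hom$ yields
\[
R\Hom_A(G^\bullet_f, M) \simeq R\lim_n R\Hom_A(K(f^n)[-1], M) \simeq R\lim_n M \otimes^L_A A/f^n,
\]
and dually $G^\bullet_f \otimes^L_A M \simeq \colim_n R\Hom_A(A/f^n, M)$ for the local cohomology formula. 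To match these with the abstract constructions of Section \ref{FormalComplSection}, I would verify that $R\Hom(G^\bullet, -)$ lands in $I$-complete modules and restricts to the identity on such modules---the latter because $R\Hom(\pi_0(A)[1/f_i], M) \simeq 0$ whenever $M$ is $I$-complete, by the two-term description of $G^\bullet_{f_i}$---and similarly for $G^\bullet \otimes^L (-)$ with respect to $I$-nilpotence; these properties pin down both functors by the universal property of the relevant adjoints.

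The main obstacle is the meticulous bookkeeping of shifts, signs, and transition maps in the Koszul-\v{C}ech comparison; none of this is conceptually deep but it requires care, particularly in verifying that the colimit-of-Koszul description of $G^\bullet$ is compatible with its realization as a complex of localizations at the chain level. Since this computation is essentially standard and rather orthogonal to the main thrust of the paper, I would ultimately defer the complete verification to the cited references.
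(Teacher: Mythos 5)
The paper does not give a proof of this proposition---it defers entirely to \cite{SP}, \cite{BS:PE}, and \cite{GR:DGI}---so what you have written is a reconstruction of the standard argument rather than a comparison with anything in the text. Your outline largely tracks that standard argument, but there is one real gap. The asserted equivalence $A/(f_1^n, \ldots, f_r^n) \simeq K(f_1^n) \otimes_A \cdots \otimes_A K(f_r^n)$ is not a quasi-isomorphism: the right-hand side is the Koszul complex $K(f_1^n, \ldots, f_r^n)$, which has $H^0 = A/(f_1^n, \ldots, f_r^n)$ but nonvanishing higher cohomology whenever the $f_i^n$ fail to form a regular sequence. What is true---and what the noetherian hypothesis on $\pi_0(A)$ is actually for---is that the two inverse systems $\{K(f_1^n, \ldots, f_r^n)\}_n$ and $\{A/(f_1^n, \ldots, f_r^n)\}_n$ are pro-isomorphic (an Artin--Rees argument), so that the two $R\lim$'s agree. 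This pro-isomorphism is a genuinely separate preliminary step in the proof, not a degreewise identification, and your reduction to the $r=1$ case via tensor decomposition really compares $R\Hom(G^\bullet,-)$ with $R\lim_n$ of the Koszul complexes, not the quotients. Once that step is supplied, the rest of your plan---the $\colim_n K(f^n)[-1] \simeq G^\bullet_f$ identification (with careful indexing of the colimit so that the differential becomes the localization map, not multiplication by $f$), the Koszul self-duality $K(f^n)^\vee \simeq K(f^n)[-1]$, the passage of $R\lim$/$\colim$ past tensoring by perfect complexes, and the universal-property matching with the abstract completion and local cohomology adjoints---is sound, modulo the shift and sign bookkeeping you correctly flag.
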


\begin{rmk}
A theory of formal completions is described in \cite{GR:DGI} with the following notation.  Let $X$ be a dg scheme, and $Z \subset X$ a classical closed subscheme.  There is a functor $\widehat{i}^*: \QCoh(X) \rightarrow \QCoh(\widehat{X}_Z)$ with a fully faithful (continuous) left adjoint $\widehat{i}_?$ whose essential image is the category of quasicoherent sheaves supported on $Z$, and a fully faithful (non-continuous!) right adjoint $\widehat{i}_*$ whose essential image is the category of quasicoherent sheaves complete with respect to the ideal sheaf for $Z$.  There is an exact triangle of functors arising from the localization functor $\widehat{i}_? \widehat{i}^*$ (whose essential image is cocomplete):
$$\Gamma_Z = \widehat{i}_? \widehat{i}^* \rightarrow \text{id}_{\QCoh(X)} \rightarrow j_* j^* = (-)|_U.$$
In particular, the functor on the left is local cohomology, and the functor on the right is restriction to $U$.  The (non-continuous) functor $\widehat{i}_* \widehat{i}^*$ is the (derived) completion.
\end{rmk}

The following lemma is likely well-known, but we could not find a reference.
\begin{lemma}\label{LemmaDevissage}
Let $X$ be a derived scheme, $i: Z \subset X$ a closed subscheme and $j: U = X - Z \rightarrow X$ its complement. Let $\phi: \mathcal{F} \rightarrow \mathcal{G}$ be a map of quasicoherent sheaves on $X$.  If the derived completion $\widehat{\phi}_Z$ and the restriction $\phi|_U$ are isomorphisms, then $\phi$ is an isomorphism.
\end{lemma}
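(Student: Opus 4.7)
The plan is to reduce to an affine local situation and show that the cone of $\phi$ must vanish, by arguing it is simultaneously supported on $Z$ and has vanishing derived completion.

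First I would observe that all hypotheses and the conclusion are Zariski-local on $X$, so we may assume $X = \Spec(A)$ is affine with $Z$ cut out by an ideal $I \subset \pi_0(A)$. Form the cone $C := \operatorname{cone}(\phi) \in \QCoh(X)$. Since restriction $j^*$ is exact, $C|_U = \operatorname{cone}(\phi|_U) = 0$. Similarly, derived completion $\widehat{(-)}_Z$ is a left adjoint between stable categories, so it is exact, and hence $\widehat{C}_Z = \operatorname{cone}(\widehat{\phi}_Z) = 0$. It now suffices to prove that $C = 0$.

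Next I would invoke the exact triangle of functors
$$\Gamma_Z \longrightarrow \mathrm{id}_{\QCoh(X)} \longrightarrow j_* j^*$$
recalled in the remark preceding the lemma. Applied to $C$, the vanishing $j^* C = 0$ gives an equivalence $\Gamma_Z(C) \simeq C$; in particular $C$ lies in the essential image of $\Gamma_Z$, i.e. $C \in A\dmod_{nil}$ is $I$-nilpotent.

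Finally, the preceding proposition (quoting Proposition 4.2.5 of \cite{Lu:DAGXII}) states that the composition
$$\begin{tikzcd} A\dmod_{nil} \arrow[r, hook] & A\dmod \arrow[r, "\widehat{(-)}"] & A\dmod_{cpl} \end{tikzcd}$$
is an equivalence of categories. Since $C$ is $I$-nilpotent and its completion $\widehat{C}_Z$ vanishes, this equivalence forces $C = 0$, so $\phi$ is an equivalence. There is no real obstacle to the argument beyond assembling the pieces: the only thing to check carefully is exactness of the completion functor on the cone, which holds because $\widehat{(-)}_Z$, as a left adjoint of an inclusion of a stable subcategory of $\QCoh(X)$, preserves fiber sequences.
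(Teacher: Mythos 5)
Your proof is correct and uses essentially the same ingredients as the paper's: the exact triangle $\Gamma_Z \to \mathrm{id} \to j_*j^*$ together with the equivalence between $I$-nilpotent and $I$-complete objects under completion. The only difference is organizational — you first deduce from $j^*\cone(\phi)=0$ that $\cone(\phi)$ is nilpotent and then apply Proposition 4.2.5 directly, whereas the paper first deduces $\widehat{i}^*\cone(\phi)=0$ from faithfulness of $\widehat{i}_*$, then concludes $\Gamma_Z(\cone(\phi))=0$ and applies the triangle; these are the same argument run in opposite order through the same equivalence.
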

\begin{proof}
Using the above exact triangle, to show that $\phi$ is an isomorphism, it suffices to show that $\Gamma_Z(\phi)$ is an isomorphism, or equivalently, that $\Gamma_Z(\cone(\phi)) = 0$.  To this end, note that $\widehat{\cone(\phi)} = \wh{i}_* \wh{i}^* \cone(\phi) = 0$, and that $\wh{i}_*$ is fully faithful, so that $\wh{i}^*\cone(\phi) = 0$, so that $\Gamma_Z(\cone(\phi)) = \wh{i}_?\wh{i}^* \cone(\phi) = 0$.
\end{proof}

\begin{exmp}
The above is not true for non-derived completions.  For example, take $X = \mathbb{A}^1$, $Z = \{0\}$, and $\phi: 0 \rightarrow M = k[x,x^{-1}]/k[x]$ ($M$ is the module of distributions supported at zero).  Since $M$ is supported at zero, $M|_U = 0$, and since $x^k M = M$ for all $k$, $\widehat{M}_Z = 0$, but $\phi$ is not an isomorphism.  On the other hand, the derived completion of $M$ is $k[[x]][1]$.
\end{exmp}

%

\section{An equivariant localization theorem in derived loop spaces and Hochschild homology}\label{MainSection}

\subsection{Equivariant localization for derived loop spaces}\label{FormLoopSection}

The following construction defines a notion of formal and unipotent loops near a semisimple orbit of $G/G$, i.e. an adjoint closed $G$-orbit in $G$ consisting of semisimple elements.
\begin{defn}[$z$-formal and $z$-unipotent loops]
Let $X$ be a derived scheme, $G$ a reductive group acting on $X$, and $z \in G$ a semisimple element.  We let $G^z$ denote the centralizer of $z$, i.e. the $z$-fixed points under the adjoint action.  We define $Z = \{gzg^{-1} \mid g \in G\}$ to be the closed $G$-orbit containing $z$ and $U_z = \{gzug^{-1} \mid g \in G, u \in U \cap G^z\}$ to be its saturation (here, $U$ is the unipotent cone of $G$ and $G^z$ is the centralizer of $z$).  We define the \emph{$z$-formal} and \emph{$z$-unipotent loops} in $BG$ by
$$\LLf_z(BG) :=  \widehat{Z}/G\rightarrow \LL(BG) = G/G, \;\;\;\;\;\;\; \;\;\;\LL^u_z(BG) := \widehat{U_z}/G \rightarrow \LL(BG) = G/G.$$
For a quotient stack $X/G$, we define the \emph{$z$-formal} and \emph{$z$-unipotent loops} by the pullback squares:
$$\begin{tikzcd}
\LLf_z(X/G) \arrow[r] \arrow[d] & \LL(X/G) \arrow[d] & & \LL^u_z(X/G) \arrow[r] \arrow[d] & \LL(X/G) \arrow[d] \\
\LLf_z(BG) \arrow[r] & \LL(BG)& & \LL^u_z(BG) \arrow[r] & \LL(BG).
\end{tikzcd}$$

This construction is functorial in representable maps over $BG$.  Note that by Propositions \ref{FormalLoopsQuotient} and \ref{UnipotentLoopsQuotient}, $\LLf_e(X/G) = \LLf(X/G)$ and $\LL^u_e(X/G) = \LL^u(X/G)$, where $e \in G$ is the identity.
\end{defn}

\begin{prop}\label{EtaleProp}
Let $G$ be a reductive group, and $z \in G$ a semisimple element.  The map $\LL(BG^z) \rightarrow \LL(BG)$ is (Zariski) locally \'{e}tale at $z$, i.e. there is a Zariski open neighborhood of $Z/G \subset \LL(BG) = G/G$, and therefore a Zariski open neighborhood of $U_z/G \subset \LL(BG)$, on which the map is \'{e}tale.
\end{prop}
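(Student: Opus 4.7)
The plan is to invoke Luna's étale slice theorem applied to the conjugation action of $G$ on itself at the semisimple point $z$.  Since $z$ is semisimple in the reductive group $G$, the conjugation orbit $Gz$ is closed in $G$ and the stabilizer $G^z$ is a reductive subgroup, so the hypotheses of Luna's theorem hold.

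The crucial input is to identify the centralizer $G^z$ (or a $G^z$-stable Zariski open neighborhood of $z$ therein) as providing such a slice.  After trivializing $T_z G \cong \g$ by left translation, the tangent space to the orbit is $\mathrm{im}(\mathrm{Ad}(z) - 1) \subset \g$ and the tangent space to $G^z \subset G$ is $\g^z = \ker(\mathrm{Ad}(z) - 1)$.  Semisimplicity of $z$ makes $\mathrm{Ad}(z)$ diagonalizable on $\g$ and gives the $G^z$-equivariant splitting $\g = \g^z \oplus \mathrm{im}(\mathrm{Ad}(z) - 1)$, so $T_z G^z$ is a $G^z$-stable complement to $T_z(Gz)$, verifying the transversality hypothesis of the slice theorem.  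Luna's theorem then yields a $G^z$-stable Zariski open $S \subset G^z$ containing $z$ and a $G$-saturated Zariski open $V \subset G$ containing $Gz$ such that the conjugation map
\[
G \times^{G^z} S \to V, \qquad [g,h] \mapsto g h g^{-1},
\]
is étale.  Passing to $G$-quotient stacks for the conjugation action descends this to an étale morphism $S/G^z \to V/G$, which, under the identifications $\LL(BG^z) = G^z/G^z$ and $\LL(BG) = G/G$, exhibits an étale morphism from an open substack of the source to the Zariski open substack $V/G$ of the target containing the orbit $\{Gz\}/G$.

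The main obstacle is the bookkeeping required to phrase étaleness over a Zariski open of the target.  The stacky pullback of $\LL(BG^z) \to \LL(BG)$ along $V/G \hookrightarrow \LL(BG)$ is $(V \cap G^z)/G^z$, which may strictly contain $S/G^z$: the additional $G^z$-orbits consist of elements of $G^z$ that are $G$-conjugate but not $G^z$-conjugate to points of $S$, and are parametrized by the finite relative Weyl group $N_G(G^z)/G^z$.  One resolves this either by $G$-equivariance (after possibly shrinking $V$, the étale conclusion of Luna propagates over all of $V \cap G^z$, giving a disjoint union of copies of $S/G^z$ each étale over $V/G$), or by applying the tangent-space characterization of étaleness pointwise at each additional $G^z$-orbit using the same semisimple-decomposition argument, which works because each element of $V \cap G^z$ close to $Gz$ is again semisimple and has centralizer conjugate to $G^z$.
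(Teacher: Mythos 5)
Your main argument matches the paper's: both proofs invoke Luna's \'etale slice theorem for the conjugation action of $G$ on itself at $z$, and both identify (a neighborhood of $z$ in) $G^z$ as the slice by observing that semisimplicity of $z$ makes $\mathrm{Ad}(z)$ diagonalizable so that $\g = \g^z \oplus \mathrm{im}(\mathrm{Ad}(z)-1)$ is a $G^z$-equivariant splitting with $\g^z = T_z G^z$ complementary to $T_z(G\cdot z)$. Up to that point you and the paper are in step.

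However, the resolution you propose in the last paragraph is incorrect. It is \emph{not} true that, after shrinking $V$, ``the \'etale conclusion of Luna propagates over all of $V \cap G^z$,'' nor is it true that ``each element of $V\cap G^z$ close to $Gz$ \dots{} has centralizer conjugate to $G^z$'' in a way that rescues the tangent-space check. Concretely, take $G = SL_3$ and $z = \mathrm{diag}(a,a,a^{-2})$ with $a \ne 1$, $a^3 \ne 1$; then $G^z$ is the block $(2,1)$ Levi, and $w' = \mathrm{diag}(a,a^{-2},a)$ lies in $G^z \cap (G\cdot z)$ but is not $G^z$-conjugate to $z$. The tangent-complex comparison at $w'$ reduces to whether $\mathrm{Ad}(w') - 1$ is invertible on $\g/\g^z = \mathrm{span}(e_{13},e_{23},e_{31},e_{32})$, but $\mathrm{Ad}(w')$ fixes $e_{13}$ and $e_{31}$, so the map $G^z/G^z \to G/G$ is \emph{not} \'etale at $w'$, and $w'$ lies in every $G$-invariant Zariski open containing $G\cdot z$. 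So the full preimage $(V\cap G^z)/G^z$ over any such $V$ genuinely contains non-\'etale points, and no shrinking removes them.

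This is not a fatal problem, because the ``full-pullback'' reading of the proposition is stronger than what is established by Luna's theorem or what is used downstream. Both your argument and the paper's actually establish the right thing: there is a $G^z$-invariant Zariski open $S\subset G^z$ containing $z$ (the slice open) on which $S/G^z \to G/G$ is \'etale, mapping to an open $V/G$ containing $(G\cdot z)/G$. In Theorem~\ref{MainThm} the proposition is combined with Proposition~\ref{EtaleCotangent} to obtain an equivalence of formal completions at $\{z\}/G^z \cong (G\cdot z)/G$, and for that it suffices that the map be \'etale on an open neighborhood of $z$ in the \emph{source}; the other points of $(V\cap G^z)\cap (G\cdot z)$ never enter. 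So your closing paragraph addresses a concern that does not need to be addressed, and the two ``resolutions'' you offer are both false; dropping the paragraph and stating the conclusion as \'etaleness of $S/G^z \to V/G$ near $z$ gives a correct proof matching the paper's.
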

\begin{proof}
Let us recall the set-up of the \'{e}tale slice theorem as in \cite{Dr:LS}.  Let $G$ be a reductive group acting on an affine variety $X$, and $x \in X$ a closed point such that the stabilizer $Z_G(x)$ is reductive.  We define a map $\phi: X \rightarrow T_x(X)$  as follows.  Let $\mf{m}$ be the maximal ideal for $x \in X$; the quotient map to the cotangent space has a $Z_G(x)$-equivariant splitting $\mf{m}/\mf{m}^2 \rightarrow \mf{m}$ since $Z_G(x)$ is reductive, defining a map $\Sym_k(\mf{m}/\mf{m}^2) \rightarrow k[X]$.  Geometrically, this means choosing functions $f_1, \ldots, f_n \in k[X]$ vanishing at $x$ whose differentials generate the cotangent space at $x$, and defining the map $\phi: X \rightarrow T_x(X)$ by evaluation 
$$y \mapsto \sum f_i(y) \;  \left. \frac{d}{df_i}\right|_{y=x},$$
in a $Z_G(x)$-equivariant manner.  The \'{e}tale slice is the inverse image $\phi^{-1}(N)$ where $N \subset T_x(X)$ is any normal subspace to $T_x(G \cdot x) \subset T_x(X)$, and the theorem tells us that the map $G \times^{Z_G(x)} \phi^{-1}(N) \rightarrow X$ is \'{e}tale.

Specializing to our situation, where $G$ acts on itself by the adjoint action, we produce the $G^z$-equivariant map $\phi: G \rightarrow T_z(G)$ as follows.  Affine locally at $z$, we can choose generators $f_1, \ldots, f_n$ such that 
$$k[G]/(f_1, \ldots, f_r) = k[G^z],$$ and the vanishing of $df_1, \ldots, df_r$ cuts out $\g^z \subset T_z(G) \simeq \g z$ (i.e. the translation of $T_e(G) = \g$ by central $z$). 
Thus it suffices to show that $\mathfrak{g}^z$ is a normal subspace to $T_z(G \cdot z)$, since $\phi^{-1}(\mathfrak{g}^z) = G^z$ by construction.  On the other hand, we have a natural isomorphism $G \cdot z \simeq G/G^z$, inducing $T_z(G \cdot z) \simeq \g z/\g^z z$, which produces a splitting of $T_z(G \cdot z) \subset T_z(G)$ whose kernel is $\g^z$.  Explicitly, $z$ is semisimple and acts on $\g$, so $\g$ decomposes into $z$-eigenspaces; $\g^z$ is the trivial eigenspace and $\g/\g^z$ is isomorphic to the sum of all other eigenspaces.

Using the fact that the map $G^z \times^{G^z} G \rightarrow G$ is $G$-equivariant, and since for any $u \in U_z$ we have $z \in \overline{G \cdot u}$, it follows that every open set containing $Z$ also contains $U_z$.
\end{proof}

\begin{exmp}\label{LocalFailure}
Let $G$ be a simple reductive algebraic group and choose $z \in G$ regular semisimple.  Its centralizer is a torus $T$ and the map $G \times^T T^{reg} \rightarrow G^{rs}$ is \'{e}tale with fiber $W_T = N(T)/T$.
\end{exmp}

\begin{cor}\label{UniFormBGzBG}
Let $X$ be a prestack equipped with the action of a reductive group $G$ over $k$, and let $z \in G$ be central.  Then, $\LL(X/G^z) \rightarrow \LL(X/G)$ is \'{e}tale over a neighborhood of $U_z/G \subset G/G$.  In particular, the natural maps $\LLf_z(X/G^z) \rightarrow \LLf_z(X/G)$ and $\LL_z^u(X/G^z) \rightarrow \LL_z^u(X/G)$ are equivalences.
\end{cor}
\begin{proof}
Loop spaces commute with fiber products, and $X/G^z = X/G \times_{BG} BG^z$.  The second claim follows since \'{e}tale maps induces equivalences on formal completions along isomorphic closed subschemes.  It is a straightforward verification that the map
$$U_{z, G^z} \times^{G^z} G = \{(huzh^{-1}, g) \mid g \in G, h \in G^z, u \in U_G \cap G^z\} \longrightarrow U_{z,G} = \{guzg^{-1} \mid u \in U_G \cap G^z\}$$
is an isomorphism.
\end{proof}

We define two competing notions of $z$-invariants.
\begin{defn}
In the set-up above, we define the \emph{derived $z$-invariants} $X^z$ to be the derived fiber product of the diagram
$$\begin{tikzcd}
X^z \arrow[r] \arrow[d] & X \arrow[d, "\Gamma_z"] \\
X \arrow[r, "\Delta"] & (X \times X)
\end{tikzcd}$$
where $\Gamma_z$ denotes the graph of the action by a closed point $z \in G$.
We define the \emph{classical $z$-invariants} $X^z$ to be $\pi_0(X^z)/G^z$, or equivalently, the above square considered as an underived fiber product.
\end{defn}

Given this notion of $z$-formal and $z$-unipotent loops, we are ready to define the localization map comparing $z$-formal and unipotent loops with the formal and unipotent loops of the classical $z$-fixed points.
\begin{defn}[Localization map and formal localization map]\label{MainDef}
We define the following maps realizing localizations from the most global to local.
\begin{itemize}
\item We define the \emph{global localization map} via the composition
$$\ell_z: \begin{tikzcd} \LL(\pi_0(X^z)/G^z) \arrow[r] & \LL(X/G^z) \arrow[r] & \LL(X/G)\end{tikzcd}$$
induced by the sequence of natural maps of quotient stacks $\pi_0(X^z)/G^z \rightarrow X/G^z \rightarrow X/G$ where the first map is the closed immeresion and the second map is the base change map along $BG^z \rightarrow BG$.  For $U/G \subset G/G$ an open subscheme, we by $\ell_{z, U}$ the restriction of $\ell_z$ to $U$.  The map lives over the natural map $\LL(BG^z) \rightarrow \LL(BG)$.
\item The \emph{unipotent localization map}  $\ell_z^u: \begin{tikzcd} \LL^u(\pi_0(X^z)/G^z) \arrow[r] & \LL^u_z(X/G)\end{tikzcd}$
is the base change of $\ell_z$ along
$$\begin{tikzcd}
\LL^u_z(BG^z) \arrow[d, "\simeq"] \arrow[r] & \LL(BG^z) \arrow[d]  \\
\LL^u_z(BG) \arrow[r] & \LL(BG)
\end{tikzcd}$$
where the isomorphism on the left arises via Corollary \ref{UniFormBGzBG}.  Note that the map lives over $\LL^u_z(BG)$.
\item The \emph{formal localization map} $\widehat{\ell}_z: \begin{tikzcd} \LLf_z(\pi_0(X^z)/G^z) \arrow[r] & \LLf_z(X/G)\end{tikzcd}$
is the base change of $\ell_z$ along
$$\begin{tikzcd}
\LLf_z(BG^z) \arrow[d, "\simeq"] \arrow[r] & \LL(BG^z) \arrow[d]  \\
\LLf_z(BG) \arrow[r] & \LL(BG)
\end{tikzcd}$$
where the isomorphism on the left arises via Corollary \ref{UniFormBGzBG}.  Note that the map lives over $\LLf_z(BG)$.
\end{itemize}
\end{defn}

\begin{rmk}
Applying the functor $- \times_{BG} \pt$, $\ell_z$ can be identified with the map induced on fiber products of the diagrams 
$$\begin{tikzcd} & G \times^{G^z} (\pi_0(X^z) \times G^z) \arrow[d] \\ G \times^{G^z} \pi_0(X^z) \arrow[r] & G \times^{G^z} (\pi_0(X^z) \times \pi_0(X^z)) \end{tikzcd} \longrightarrow \begin{tikzcd} & X \times G \arrow[d] \\ X \arrow[r] & X \times X \end{tikzcd}$$
where the top map sends $(h, x, g) \mapsto (h \cdot x, hgh^{-1})$.
\end{rmk}

We now investigate the map $\LL(\pi_0(X^z)/G^z)) \rightarrow \LL(X/G^z)$.  For ease of notation, we can replace $G^z$ with a reductive group $G$ in which $z$ is central.  Our goal is to find an open $G$-closed neighborhood $U/G \subset G/G$ on which the above map is an equivalence.  Let us first consider the case when $G = T$ is a torus.
\begin{lemma}[Finiteness of stabilizer subgroups]\label{TorusStab}
Let $T$ be a torus acting on a (quasicompact) variety $X$.  Only finitely many subgroups of $T$ may appear as stabilizers of this action.
\end{lemma}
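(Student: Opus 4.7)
The plan is to proceed by Noetherian induction on closed $T$-invariant subschemes of $X$, reducing the claim to the following \emph{generic constancy} sub-lemma: for any irreducible $T$-variety $Y$ there is a dense open subset $U\subseteq Y$ on which $y\mapsto T_y$ is constant. Because $T$ is connected and $X$ is Noetherian, $T$ permutes the (finitely many) irreducible components of $X$ trivially, so every irreducible component is automatically $T$-invariant and the sub-lemma can be applied to it.

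For the sub-lemma, I would spread out the stabilizer from the generic point $\eta\in Y$. As a closed subgroup of the torus $T_{k(\eta)}$, the stabilizer $T_\eta$ corresponds by Cartier duality to a sublattice of the character lattice $X^{\ast}(T)$; since that lattice is defined over the prime field, $T_\eta$ is the base change of a unique closed $k$-subgroup $H\subseteq T$. The stabilizer subscheme $\Stab\subseteq T\times Y$ (cut out as the preimage of the diagonal under $(t,y)\mapsto(ty,y)$) contains $H\times\{\eta\}$, and taking closure in $T\times Y$ gives $H\times Y\subseteq\Stab$, so $H\subseteq T_y$ for every $y\in Y$. Conversely, the ``excess stabilizer'' locus $\{y\in Y : T_y\supsetneq H\}$ equals the image of the open subscheme $\Stab\smallsetminus(H\times Y)\subseteq\Stab$ under projection to $Y$; by Chevalley's theorem this image is constructible, and it does not contain $\eta$, so it lies in a proper closed subset of $Y$ whose complement is the desired $U$.

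With the sub-lemma in hand, the Noetherian induction is immediate. If $X$ is empty there is nothing to prove; otherwise pick any irreducible component $X_1\subseteq X$, apply the sub-lemma to obtain a dense open $U\subseteq X_1$ with constant stabilizer $H_1$, and shrink $U$ so that it is disjoint from the other irreducible components of $X$. Then $X\smallsetminus U$ is a proper closed $T$-invariant subset of $X$, and the induction hypothesis gives a finite set of stabilizers on it; adjoining $H_1$ yields the finite collection of stabilizers on $X$.

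The main obstacle I anticipate is the spreading-out argument in the second paragraph: one must argue carefully that the generic stabilizer really does descend to a $k$-subgroup $H\subseteq T$ (which is where the rigidity of closed subgroups of a torus, via Cartier duality, enters) and that the excess-stabilizer locus is constructible and avoids $\eta$. Once $\Stab$ and the descended subgroup $H$ are in place, everything else is formal Noetherian bookkeeping.
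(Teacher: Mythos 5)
Your argument is correct, but it follows a genuinely different route from the paper. The paper reduces to the affine case via a finite $T$-closed Zariski cover and then inducts on the pair (dimension of the variety, dimension of the torus), invoking the Luna slice theorem at a point of lower-dimensional stabilizer to \'etale-locally split off a slice $V$ with a smaller group $T^x$, and handling the full-dimensional-stabilizer case via the finiteness of the component group $T/T^\circ$. You instead prove a generic-constancy lemma and run a Noetherian induction on closed $T$-invariant subsets. The key input is the rigidity of diagonalizable groups: every closed subgroup of $T_{k(\eta)}$ corresponds under Cartier duality to a quotient of $X^*(T)$, which is defined over the prime field, so $T_\eta$ is the base change of a $k$-subgroup $H$; the closure argument gives $H\times Y\subseteq\Stab$, and Chevalley constructibility then shows the excess locus is contained in a proper closed subset. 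Your approach is more elementary (no slice theorem), works directly at the generic point without first reducing to the affine case, and makes the role of the abelian/diagonalizable hypothesis transparent: it is exactly what makes the stabilizer subgroup descend canonically. The paper's approach is more in keeping with the \'etale-slice philosophy it uses elsewhere (e.g.\ Proposition \ref{EtaleProp}). One small point you should make explicit: the dense open $U$ in the Noetherian induction needs to be $T$-invariant for $X\setminus U$ to be $T$-invariant; this holds automatically because $T$ is abelian (so $T_{ty}=T_y$) and the irreducible components are $T$-stable, but it deserves a sentence.
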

\begin{proof}
We can work affine locally, since $X$ has a finite $T$-closed Zariski cover, and may also assume that $X$ is connected.  
If every point of $X$ has stabilizer of equal dimension to $T$, the possible stabilizer subgroups are in bijection with a subset of the  set of subgroups of the (finite) component group $T/T^\circ$.  If there is a point $x \in X$ whose stabilizer $T^x \subset T$ has strictly smaller dimension, then by the Luna slice theorem (note that the stabilizer $T^x$ is reductive since every subgroup is) there is a locally closed subvariety $V \subset X$ such that the map $a: T \times^{T^x} V \rightarrow X$ is \'{e}tale and dominant.  Any stabilizer of a point in the image of $a$ must be a subgroup of $T^x$, so the problem reduces to considering (a) the action of $T^x$ on $V$ along with (b) the action of $T$ on the compliment of $V \subset X$ (which is a closed subvariety, therefore affine, of strictly smaller dimension).  Note that in both cases, the dimension of either the variety or the group decreases, and that the claim is obviously true for zero-dimensional varieties and discrete (finite) groups, so the lemma follows by induction.
\end{proof}

\begin{cor}
In the above situation, let $z \in T$ and let $U \subset T$ be the open neighborhood of $z$ obtained by deleting the finitely many stabilizers which do not contain $z$.  Then, for $w \in U$, we have $X^w \subset X^z$.
\end{cor}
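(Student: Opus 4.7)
The proof is essentially a direct unwinding of the hypotheses. The plan is to use that the set of stabilizer subgroups is finite (by Lemma \ref{TorusStab}), so the union of those not containing $z$ is a closed subset of $T$ whose complement is a Zariski open neighborhood of $z$. Any group element lying in this neighborhood can only stabilize points whose stabilizer contains $z$.

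More precisely, here are the steps I would carry out. First, by Lemma \ref{TorusStab}, enumerate the stabilizer subgroups of the $T$-action on $X$ as $\{T^{x_1}, \ldots, T^{x_m}\}$ (each of which is a closed subgroup of $T$); partition this list into those containing $z$ and those not containing $z$. Let $H_1, \ldots, H_n$ be the (closed) stabilizer subgroups not containing $z$, and define
\[
U := T \setminus (H_1 \cup \cdots \cup H_n).
\]
Since each $H_i$ is closed in $T$ and the union is finite, $U$ is Zariski open in $T$; since by construction $z \notin H_i$ for any $i$, we have $z \in U$. This matches the description in the statement of the corollary.

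Second, the main claim: for $w \in U$, we have $X^w \subseteq X^z$. Take $x \in X^w$; then by definition $w \in T^x$. The stabilizer $T^x$ is one of the subgroups in our finite enumeration. Since $w \in U$, $w$ does not lie in any $H_i$, so $T^x$ is not one of the $H_i$. Hence $T^x$ lies in the complementary part of the partition, namely the stabilizers containing $z$, so $z \in T^x$. Therefore $z$ fixes $x$, i.e., $x \in X^z$, establishing $X^w \subseteq X^z$.

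There is no real obstacle in this argument; the work has already been done in Lemma \ref{TorusStab}, which ensures finiteness of the stabilizer poset. Once one has finitely many closed stabilizers, the open neighborhood $U$ is constructible simply by excision, and the containment $X^w \subseteq X^z$ is automatic from the definition of $U$.
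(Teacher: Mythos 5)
Your proof is correct and is precisely the argument the paper leaves implicit: the corollary is stated without proof immediately after Lemma~\ref{TorusStab}, and the intended deduction is exactly the excision-of-bad-stabilizers argument you spell out. Nothing to add.
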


We bootstrap this reuslt to prove an analogous result for general reductive $G$.
\begin{prop}\label{NearbyPoints}
Let $G$ be a reductive group, $z \in G$ a central element and $X$ a scheme with a $G$-action.  Then, there is an open neighborhood $U$ of $z$ in $G$ that is closed under the adjoint action and such that $w \in U$ implies that $X^w \subset X^z$.
\end{prop}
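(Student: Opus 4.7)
The plan is to reduce to the torus case (Lemma \ref{TorusStab}) by combining the Jordan--Chevalley decomposition with the Chevalley identification $G/\!/G \cong T/W$. We may assume $G$ is connected, since $G^\circ$ is open and closed in $G$ and the desired property is stable under conjugation by the finite component group.

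First, by Jordan--Chevalley every $w \in G$ decomposes as $w = w_s w_u$ with $w_s$ semisimple, $w_u$ unipotent, and $w_s, w_u$ commuting, so $X^w = X^{w_s} \cap X^{w_u} \subseteq X^{w_s}$. Thus it suffices to find a conjugation-stable open neighborhood $U$ of $z$ such that $w \in U$ forces $X^{w_s} \subseteq X^z$. Fix a maximal torus $T \subset G$ containing $z$ (possible since $z$ is central). Apply Lemma \ref{TorusStab} to the $T$-action on $X$: the set of $T$-point-stabilizers $\{S_1, \ldots, S_n\}$ is finite. Since $\Stab_T(n \cdot x) = n\Stab_T(x)n^{-1}$ for $n \in N_G(T)$, this collection is stable under the Weyl group $W = N_G(T)/T$; and since $z$ is $W$-fixed (being central), so is the subcollection $\{S_i : z \notin S_i\}$. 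Define
$$U_T \;:=\; T \,\setminus\, \bigcup_{i:\, z \notin S_i} S_i,$$
a $W$-invariant Zariski-open neighborhood of $z$ in $T$. For any $t \in U_T$ and $x \in X^t$, we have $t \in \Stab_T(x) = S_i$ for some $i$ with $z \in S_i$, so $x \in X^z$; hence $X^t \subseteq X^z$ for every $t \in U_T$.

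To globalize, recall that the adjoint quotient $\chi: G \to G/\!/G$ is isomorphic to $T/W$ via Chevalley restriction, and $\chi(w) = [w_s]$ since $w_s$ lies in the closure of the conjugacy class of $w$ and closed conjugacy classes are precisely those of semisimple elements. Since $U_T$ is $W$-invariant and open in $T$, the image $U_T/W$ is open in $T/W$; define $U := \chi^{-1}(U_T/W) \subset G$. Then $U$ is Zariski-open, invariant under conjugation (as $\chi$ is), and contains $z$. For $w \in U$ the element $w_s$ is $G$-conjugate to some $t \in U_T$, say $w_s = gtg^{-1}$; since $z$ is central, $X^z$ is $G$-stable, and therefore $X^{w_s} = gX^t \subseteq gX^z = X^z$. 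Combined with $X^w \subseteq X^{w_s}$, this gives $X^w \subseteq X^z$.

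The main obstacle is globalizing $U_T \subset T$ to an open, conjugation-invariant neighborhood of $z$ in $G$: the naive saturation $G \cdot U_T$ consists only of semisimple elements and hence is not open, missing nearby elements with nontrivial unipotent part. The Chevalley map resolves this, as its fibers over a class $[t] \in T/W$ consist precisely of those elements whose semisimple part is $G$-conjugate to $t$, which is exactly the condition needed for the reduction in Step 1 to yield $X^w \subseteq X^z$.
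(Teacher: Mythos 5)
Your argument is correct and reaches the same open set $U$ as the paper, but the two proofs diverge in how they verify openness and how they establish $X^w \subseteq X^{w_s}$. Both begin by applying Lemma~\ref{TorusStab} to a maximal torus $T \ni z$ to get a $T$-neighborhood with the desired property. The paper then writes $U$ out explicitly as $\{gtug^{-1} \mid t \in U',\ g \in G,\ u \in C_G(t)^{uni}\}$ and proves it is open by computing $G \setminus U = G \cdot (T \setminus U')$. You instead arrange $W$-invariance of the torus neighborhood (a point the paper does not spell out) and recognize $U$ as $\chi^{-1}(U_T/W)$ under the adjoint quotient $\chi\colon G \to G//G \cong T/W$, which makes openness and conjugation-invariance immediate — cleaner, at the cost of importing the Chevalley restriction theorem and the identification $\chi(w) = [w_s]$. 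For the key inclusion $X^w \subseteq X^{w_s}$, the paper proves it by hand as a separate lemma: it builds a rank-two subgroup $\G_m \times \G_a \to G$ through the Jordan components of $w$, intersects with the stabilizer $G^x$, and rules out the bad connected components. You assert $X^w = X^{w_s} \cap X^{w_u}$ directly, which is true but does not follow formally from $w = w_s w_u$; what you are really using is the standard theorem that a closed subgroup of a linear algebraic group (here $G^x$, smooth in characteristic $0$) is stable under taking Jordan parts, so $w \in G^x$ forces $w_s, w_u \in G^x$. You should cite that fact explicitly rather than letting the ``so'' carry it. Your upfront reduction to connected $G$ is a welcome addition, since the paper's claim $G \setminus U = G \cdot (T \setminus U')$ does tacitly require every semisimple part to be conjugate into $T$, which fails for disconnected groups.
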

\begin{proof}
We define $U$ as follows.  First, take a maximal torus $T \subset G$ containing $z$.  By Proposition \ref{TorusStab}, there is an open neighborhood $U'$ of $T$ with the given property.  We define
$$U = \{gtug^{-1} \mid t \in U', g \in G, u \in C_G(t)^{uni}\}$$
where $C_G(t)^{uni}$ are the unipotent elements in the centralizer of $t$.  The set $U$ is evidently $G$-closed.  By Jordan decomposition, $G - U = G \cdot (T - U')$, so $U$ is open since $T - U'$ is closed.  It remains to show that if $w \in U$, then $X^w \subset X^z$.

Every $w \in U$ has a Jordan decomposition $w = su$ where $s$ is semisimple and $u$ is unipotent; in particular, $s = gtg^{-1}$ for some $t \in U'$ and $g \in G$.  First, note that $X^s = g \cdot X^t \subset g \cdot X^z = X^z$ (since $X^z$ is $G$-closed for central $z$).  By the following lemma, it follows that $X^w \subset X^s \subset X^z$.
\end{proof}

\begin{lemma}
Let $w \in G$ be an element of a reductive group acting on a scheme $X$, with Jordan decomposition $w = su$ for semisimple $s$ and unipotent $u$.  Then, $X^w \subset X^s$.
\end{lemma}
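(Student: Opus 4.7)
The plan is to reduce the lemma to the intrinsic/functorial nature of Jordan decomposition in linear algebraic groups. Fix a point $x \in X^w$; it suffices to show $s \in G^x := \{g \in G \mid g \cdot x = x\}$, since this immediately gives $X^w \subset X^s$ (and, incidentally, $X^w \subset X^u$ as well).

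First I would note that $G^x$ is the scheme-theoretic preimage of $\{x\}$ under the orbit morphism $G \to X$, $g \mapsto g \cdot x$. Hence $G^x \subset G$ is a closed subgroup, and in particular a linear algebraic group in its own right. This is where the ambient scheme $X$ enters the argument; once we have the algebraic subgroup $G^x$, the problem becomes entirely a statement about algebraic groups.

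The key input is then the classical fact that Jordan decomposition is preserved by closed embeddings of linear algebraic groups: if $H \hookrightarrow G$ is a closed subgroup and $h \in H$, then the Jordan decomposition of $h$ computed in $H$ coincides with the Jordan decomposition in $G$. Applying this to the closed inclusion $G^x \hookrightarrow G$ and the element $w \in G^x$, we conclude that the semisimple and unipotent parts $s, u$ of $w$ also lie in $G^x$. In particular $s \cdot x = x$, as desired.

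I do not expect any real obstacle here; the entire argument is a direct application of the functoriality of Jordan decomposition, and reductivity of $G$ is not even required (the same proof works for any linear algebraic group). The only point to verify carefully is that $G^x$ is genuinely a closed algebraic subgroup of $G$, which is immediate from the morphism-theoretic description above.
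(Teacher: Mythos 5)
Your proof is correct, and it takes a genuinely different and cleaner route than the paper's. The paper argues by constructing an explicit homomorphism $\G_m \times \G_a \to G$ through the semisimple and unipotent parts of $w$, intersecting its image with $G^x$, and doing a case analysis on the four possibilities for the connected component of that intersection; it then uses that $\G_a$ admits no finite subgroup schemes to rule out the degenerate cases. You instead invoke the standard fact that Jordan decomposition in a linear algebraic group is intrinsic to any closed subgroup containing the element (equivalently, $s$ and $u$ lie in the Zariski closure of $\langle w\rangle$, which is contained in $G^x$). This avoids the one-parameter-subgroup construction (which silently requires $s$ to lie in the image of some $\G_m \to G$), requires no case analysis, does not use reductivity of $G$, and proves the stronger conclusion $X^w \subseteq X^s \cap X^u$ simultaneously. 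The only thing to be careful about, which you flag correctly, is that $G^x$ is a closed subgroup scheme; since $k$ is algebraically closed of characteristic zero this is a smooth closed subgroup, and verifying the inclusion on $k$-points suffices.
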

\begin{proof}
If $u = e$ the claim is trivial, so suppose $u \ne e$.  Take $x \in X^w$.  There is a 1-parameter subgroup of $G$ containing $s$, and a choice of unipotent $u \ne 1$ uniquely defines an injective map of algebraic groups $\G_a \rightarrow G$, assembling into an injective map of group schemes $\G_m \times \G_a \rightarrow G$.  Then, $H = (\G_m \times \G_a) \cap G^x$ is a closed group scheme of $G$ which is at most two-dimensional and which contains $w = su$.  The connected component $H^\circ$ is either the trivial group, $\G_m \times \{0\}$, $\{1\} \times \G_a$, or the entire group $\G_m \times \G_a$.  If it is the trivial group then $H$ is discrete, therefore finite, but this is impossible since the projection to $\G_a$ is open and there are no finite group subschemes of $\G_a$.  For the same reason, $H^\circ$ cannot be $\G_m \times \{0\}$.  In the remaining two cases, $wH^\circ \subset H \subset G^x$, so $w \in G^x$ as desired.
\end{proof}

Given this, we are now ready to prove our main theorem.

\begin{thm}\label{MainThm}
Let $X$ be a smooth variety with an action of a reductive group $G$, and $z \in G$ semisimple.  There is an open substack $U/G \subset G/G$ containing $z$ such that the $S^1$-equivariant map
$$\begin{tikzcd} \ell_{z, U}: \LL(\pi_0(X^z)/G^z) \times_{G^z/G^z} (U \cap G^z)/G^z \arrow[r] &  \LL(X/G) \times_{G/G} U/G\end{tikzcd}$$
is \'{e}tale.  If $z \in G$ is central, then $\ell_{z, U}$ is an equivalence.  In particular, the $S^1$-equivariant maps on $z$-unipotent loops and $z$-formal loops are equivalences:
$$\begin{tikzcd} \ell^u_z: \LL^u_z(\pi_0(X^z)/G^z) \arrow[r, "\simeq"] & \LL^u_z(X/G), & &  \widehat{\ell}_z: \LLf_z(\pi_0(X^z)/G^z) \arrow[r, "\simeq"] & \LLf_z(X/G).\end{tikzcd}$$
\end{thm}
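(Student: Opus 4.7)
The plan is to prove the theorem in two stages: first reduce to the case where $z$ is central in $G$ using Proposition \ref{EtaleProp}, then establish \'{e}taleness via a local Luna-slice argument that reduces to a Koszul-type computation enabled by the smoothness of $X$.

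For the reduction, Proposition \ref{EtaleProp} produces an open $G$-stable neighborhood $U' \subset G$ of $G \cdot z$ on which the map $G^z/G^z \to G/G$ is \'{e}tale. Base-changing the fiber square for $\LL(X/G)$ from Proposition \ref{LoopQuotient}, we conclude that $\LL(X/G^z)|_{U'/G} \to \LL(X/G)|_{U'/G}$ is \'{e}tale. Since $\ell_z$ factors as $\LL(\pi_0(X^z)/G^z) \to \LL(X/G^z) \to \LL(X/G)$, it suffices to prove the theorem after replacing $(G, z)$ by $(G^z, z)$, so we may henceforth assume that $z$ is central in $G$.

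With $z$ central, Proposition \ref{NearbyPoints} furnishes an open $G$-stable $U \subset G$ containing $z$ such that $X^w \subset X^z$ for every $w \in U$, so classically $\pi_0(X^z)^w = X^w$ on fibers. To upgrade this classical equivalence to an \'{e}tale morphism of derived stacks, I would work locally via the Luna slice theorem: at each closed point $x \in X^z$, the stabilizer $H := G^x$ is reductive and contains $z$ as a central element, and the slice theorem yields an \'{e}tale map $G \times^H S \to X$ from an $H$-invariant slice $S$. Smoothness of $X$ allows $S$ to be taken as a neighborhood of the origin in a linear $H$-representation $V$, reducing the problem to the analogous statement for the $H$-action on $V$.

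In the linear case, the derived $w$-fixed points are the derived zero locus of the section $(1-w) \colon V \to V$, presented by an explicit Koszul resolution $\Sym_V(V^*[1])$ with differential given by contraction with $(1-w)$. The $z$-eigenspace decomposition $V = V^z \oplus V'$ makes $(1-w)$ invertible on $V'$ for $w$ in a neighborhood of $z$; restricting to such an open, the Koszul complex contracts onto the Koszul complex for $V^z$, exhibiting an equivalence $(V^z)^w \simeq V^w$ of derived fibers of $\ell_{z,U}$. A parallel cotangent complex calculation shows $\mathbb{L}_{\LL(V^z/H)/\LL(V/H)} \simeq 0$ over $U/H$, establishing \'{e}taleness. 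When $z$ is central, this \'{e}tale map is an isomorphism on classical truncations by Proposition \ref{NearbyPoints}, hence an equivalence. The equivalences $\widehat{\ell}_z$ and $\ell^u_z$ on formal and unipotent loops then follow from Proposition \ref{EtaleCotangent}, which asserts that \'{e}tale maps induce equivalences on formal completions.

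The principal obstacle is establishing \'{e}taleness uniformly over all of $U$, including non-semisimple monodromies $w = su$ for which the Koszul resolution computing $V^w$ does not collapse to a classical scheme. Smoothness of $X$ is essential, as it enables the explicit Koszul presentation; controlling the contractibility in the $V'$-direction uniformly in $w$ requires possibly shrinking $U$ and applying a $G$-equivariant finiteness argument to ensure that the $z$-eigenvalues appearing on $V'$ at the various tangent representations of points $x \in X^z$ remain bounded away from the $w$-eigenvalues for $w \in U$.
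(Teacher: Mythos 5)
Your high-level strategy matches the paper's: reduce to $z$ central using Proposition \ref{EtaleProp} applied to the adjoint action, invoke Proposition \ref{NearbyPoints} to find $U$, then resolve the diagonal by a Koszul complex enabled by smoothness and show the two Koszul complexes differ by an invertible change of coordinates because $z$ (hence nearby $w$) has no fixed vectors on the conormal bundle of $\pi_0(X^z) \subset X$. That last invertibility argument is really the heart of the matter, and your Koszul computation in the linear case captures it correctly.

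However, the mechanism by which you arrive at the linear case has a genuine gap. You write that at each closed point $x \in X^z$ the stabilizer $H = G^x$ is reductive and then apply the Luna slice theorem; this is false in general. Stabilizers of arbitrary points are not reductive (e.g.\ points of $G/U$ for $U$ unipotent), and the Luna slice theorem requires the orbit of $x$ to be closed (equivalently, by Matsushima's criterion for affine $X$, that $G^x$ be reductive). You could attempt to patch this by applying Luna only at points with closed orbits and arguing that the saturated neighborhoods $G \cdot S$ cover; this works for $X$ affine, but for general quasiprojective $X$ you would first need a $G$-invariant affine cover, which itself requires a choice of linearization and some care. The paper sidesteps all of this entirely: after the reduction to central $z$, it forgets equivariance (base changes to $\Spec k$), works directly with the local ring $A = \OO_{X,x}$ at an \emph{arbitrary} closed point $(x,w) \in \pi_0(\LL(X/G))$, and produces explicit semi-free Koszul resolutions of the diagonal modules $A$ over $A \otimes A$ and $A/J$ over $A/J \otimes A/J$. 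The decisive point is again that the coaction matrix $E$ satisfies that $E - I$ is invertible on the conormal directions because $w$ has no fixed vectors there, but this is verified with no appeal to Luna slices for the action on $X$. So the paper's argument avoids the reductivity issue entirely; your approach requires an extra covering argument to be made rigorous, and the claim about reductivity of stabilizers as stated is incorrect.

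Two smaller remarks. First, your final deduction that \'{e}tale plus an isomorphism on classical truncations yields an equivalence is correct, but the paper instead proves equivalence directly for central $z$ and derives \'{e}taleness for general $z$ by composing with the \'{e}tale map from Proposition \ref{EtaleProp}; there is no independent appeal to Proposition \ref{EtaleCotangent} for the last displayed equivalences, which instead follow because completion along a closed substack of an equivalence is an equivalence. Second, your worry about ``non-semisimple monodromies $w = su$'' is handled automatically in the paper's argument: the matrix $(E-I)(w)$ is invertible precisely because $w$ has no fixed vectors on the normal bundle of $\pi_0(X^z)$, and the lemma preceding the theorem (that $X^w \subset X^s$) ensures this without any uniform bound on eigenvalues.
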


\begin{proof}
Assuming the first statement holds, then the statement on $z$-unipotent and formal loops follows since the global localization map is a composition
$\LL(\pi_0(X^z)/G^z) \rightarrow \LL(X/G^z) \rightarrow \LL(X/G).$
By the first statement of the theorem, since $z$ is central in $G^z$, there is an open substack $U$ of $G^z/G^z$ containing both $Z = G \cdot \{z\}$ and its saturation $U_z$ over which the first map is an equivalence.  By Proposition \ref{EtaleProp}, the second map is \'{e}tale on $U$, inducing an equivalence on formal completions along $Z$ and $U_z$ by the argument in Corollary \ref{UniFormBGzBG}.

We now prove the first statement.  Let $U'' \subset G^z$ be an open neighborhood of $z \in G^z$ from Proposition \ref{NearbyPoints}, i.e. such that $\pi_0(X^w) \subset \pi_0(X^z)$ for $w \in U''$.  Let $U' \subset G$ be an open neighborhood of $z$ on which $\phi$ is \'{e}tale, obtained via Proposition \ref{EtaleProp}; note that this implies that $\phi$ is open over $U'$.  Define $U = \phi(U'') \cap U' \subset G$.  By construction, the map $\LL(X/G^z) \rightarrow \LL(X/G)$ is \'{e}tale over $U$.  It remains to prove that the map is an equivalence when $z \in G$ is central; in this case $G^z = G$.

We proceed by base changing from $BG$ to $\Spec(k)$ (i.e. forgetting equivariance).  Take $Y = \pi_0(X^z)$ for shorthand, and recall that $U \subset G$ is an open subscheme on which $\pi_0(X^w) \subset \pi_0(X^z)$ for $w \in U$. This means we wish to show that the map of derived schemes
$$i: Y \times_{Y \times Y} (Y \times U) \rightarrow X \times_{X \times X} (X \times U)$$
is an equivalence.  Since $i$ is a closed embedding (and therefore affine), we view the map of derived schemes affine locally as a map of differential graded sheaves of algebras on $U \times G$, which we abusively denote by $\OO_{\LL(X/G)} \rightarrow \OO_{\LL(Y/G)} = i_* \OO_{\LL(Y/G)}$. Since these sheaves have coherent cohomology on $X \times G$ and closed points are dense in $X \times G$ ($X \times G$ is locally finite type over a field), it suffices to check the claim on local rings at closed points $(x, w) \in \pi_0(\LL(X/G)) \subset X \times G$ where $w \in U'' \subset G$ and $x \in \pi_0(X^w) \subset \pi_0(X^z)$ (via the defining property of $U''$).

Note that $Y = \pi_0(X^z)$ is smooth when $z \in G$ is semisimple in reductive $G$ by a standard argument\footnote{Choose a torus $T$ containing $z$ and apply the \'{e}tale slice theorem to a $T$-closed affine open cover.}.  Consequently, the diagonal maps are local complete intersections, and $Y = \pi_0(X^z) \subset X$ is also a local complete intersection.  The $z$-action on the cotangent space $T^*_x(X)$ is semisimple, and determines a splitting with identifications
$$T^*_x(X) = E_0 \oplus E_1, \;\;\;\;\; E_0 = \bigoplus_{\lambda \ne 1} \ker(z - \lambda) \simeq N^*_x(\pi_0(X^z)/X), \;\;\;\;\; E_1 = \ker(z - 1) \simeq T^*_x(\pi_0(X^z)).$$

Let $J$ denote the ideal such that $\OO_{Y, x} = \OO_{X, x}/J$.  We aim to compute
\begin{equation}\label{MainProofEqn}
\OO_{X, x} \otimes_{\OO_{X \times X, (x, x)}}^L \OO_{X \times G, (x, w)} \longrightarrow \OO_{Y, x} \otimes_{\OO_{Y \times Y, (x,x)}}^L \OO_{Y \times G, (x, w)} .\end{equation}
Let $v_1, \ldots, v_r$ be a basis of $E_1$ and $v_{r+1}, \ldots, v_n$ a basis of $E_0$.  By Nakayama's lemma we can lift this basis of the cotangent space to generators $x_1, \ldots, x_n \in \mf{m}_{X, x} \subset \OO_{X, x}$ such that $J = (x_1, \ldots, x_r)$.  Furthermore, again by Nakayama, $\{x_i \otimes 1 - 1 \otimes x_i \mid i = 1, \ldots, n\}$ form a regular sequence for the diagonal\footnote{Since $A$ and $A/J$ are Cohen-Macaulay, any minimal generating set is automatically regular.} 
$X \subset X \times X$ at $(x, x)$, and likewise the images of $\{x_i \otimes 1 - 1 \otimes x_i \mid i=r+1, \ldots, n\}$ form a regular sequence for the diagonal $Y \subset Y \times Y$ at $(x,x)$.

Taking semi-free Koszul resolutions of the diagonal, as well as $\OO_{Y \times G, (x,w)}$ as a $\OO_{X \times G, (x,w)}$-module, 
Equation \ref{MainProofEqn} can be rewritten
\begin{equation}\label{MainProofEqn2}
\begin{tikzcd}
\OO_{X \times G, (x,w)}[\epsilon_1, \ldots, \epsilon_n] \arrow[r] \arrow[dotted, bend right=10, rr] & \OO_{Y \times G, (x, w)}[\epsilon_{r+1}, \ldots, \epsilon_n] & \arrow[l] \OO_{X \times G, (x, w)}[\epsilon_1', \ldots, \epsilon_r', \epsilon_{r+1}, \ldots, \epsilon_n]
\end{tikzcd}
\end{equation}
where $|\epsilon_i| = -1$, and the internal differentials are defined by $d(\epsilon_i) = c(x_i) - x_i \otimes 1$ and $d(\epsilon_i') = x_i \otimes 1$.  The map on the right is a quasi-isomorphism.  To show that Equation \ref{MainProofEqn2} is a quasi-isomorphism, it suffices to produce the dotted map above making the diagram commute.  Explicitly, we wish to show that the derived equations imposed by $d(\epsilon_1), \ldots, d(\epsilon_r)$ and by $d(\epsilon_1'), \ldots, d(\epsilon_r')$ differ by an (invertible) change of variables.  That is, we wish to find an element of $GL_r(\OO_{X \times G, (x,w)})$ which transforms
$$\{x_1 \otimes 1, \ldots, x_r \otimes 1\} \;\;\;\;\;\; \text{into} \;\;\;\;\;\; \{c(x_1) - x_1 \otimes 1, \ldots, c(x_r) - x_r \otimes 1\}.$$

The element $z$ is central, so $w$ fixes $\pi_0(X^z)$, and in particular the coaction map $c$ preserves the ideal $J$.  Thus, we can write $c(x_i) = \sum_{i, j = 1}^r e_{ij} (x_j \otimes 1)$ for some $e_{ij} \in \OO_{X \times G, (x,w)}$.   Let $E = (e_{ij})$ denote the corresponding matrix; the matrix $E - I$ is invertible if its evaluation at $w \in G$ is invertible.  The matrix $E(w)$ is the action of $w$ on the conormal space $E_0 = N^*_x(\pi_0(X^z)/X)$; in particular, $(E - I)(w)$ is invertible if and only if $E(w)$ has no fixed vectors.  But $w$ cannot fix any vectors on a normal space of its fixed-point variety, and since $N_x(\pi_0(X^z/X) \subset N_x(\pi_0(X^w/X))$, the claim follows.
\end{proof}


\begin{cor}
Let $X, Y, Z$ be smooth varieties, with maps $f: X \rightarrow Z$ and $g: Y \rightarrow Z$.  Then
$$\LLf_z(\pi_0(X^z) \times_{\pi_0(Z^z)} \pi_0(Y^z)) \simeq \LLf_z(X \times_Z Y)$$
where all fiber products are derived.
\end{cor}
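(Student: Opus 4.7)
The plan is to reduce the statement to Theorem \ref{MainThm} applied separately to each of the smooth varieties $X$, $Y$, $Z$, and then recombine the resulting equivalences via the observation that both derived loop spaces and formal completion along a fixed substack of $\LL(BG)$ commute with fiber products. Crucially, the main theorem need not be applied directly to the derived fiber product $X \times_Z Y$, which is important because this derived scheme is typically not smooth, so the hypotheses of Theorem \ref{MainThm} fail for it.

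First, since $\LL(-) = \Map(S^1, -)$ is a right adjoint (to $S^1 \times (-)$), it preserves all limits; combined with the fact that taking quotients by $G$ along equivariant maps commutes with fiber products, this yields
$$\LL((X \times_Z Y)/G) \simeq \LL(X/G) \times_{\LL(Z/G)} \LL(Y/G).$$
Since $\LLf_z(-/G)$ is by definition the base change of $\LL(-/G)$ along the fixed map $\LLf_z(BG) \to \LL(BG)$, and base change along a single map commutes with fiber products over the base, we obtain the analogous identity after completion:
$$\LLf_z((X \times_Z Y)/G) \simeq \LLf_z(X/G) \times_{\LLf_z(Z/G)} \LLf_z(Y/G).$$

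Next, I would apply Theorem \ref{MainThm} individually to each of $X$, $Y$, $Z$ (smooth varieties with $G$-action) to obtain $S^1$-equivariant equivalences
$$\LLf_z(X/G) \simeq \LLf_z(\pi_0(X^z)/G^z), \quad \LLf_z(Y/G) \simeq \LLf_z(\pi_0(Y^z)/G^z), \quad \LLf_z(Z/G) \simeq \LLf_z(\pi_0(Z^z)/G^z).$$
These equivalences are functorial in representable maps over $BG$, being induced by the formal localization maps $\wh{\ell}_z$ built from the natural composites $\pi_0(X^z)/G^z \to X/G^z \to X/G$, so they intertwine the maps induced by $f$ and $g$. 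Combining these with the first step, and then reversing the fiber-product identity (now with $G^z$ in place of $G$) yields
$$\LLf_z((X \times_Z Y)/G) \simeq \LLf_z\bigl((\pi_0(X^z) \times_{\pi_0(Z^z)} \pi_0(Y^z))/G^z\bigr),$$
which is the claim. Since the result is a formal consequence of the main theorem, there is no substantial technical obstacle beyond Theorem \ref{MainThm} itself; the only bookkeeping is verifying the functoriality of $\wh{\ell}_z$ in $f$ and $g$, which is immediate from its construction as a pullback of natural maps.
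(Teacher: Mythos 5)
Your proposal is correct and takes essentially the same approach as the paper, whose proof is the single observation that loop spaces commute with fiber products; you have simply made explicit the bookkeeping (formal completion as base change, applying Theorem~\ref{MainThm} to $X$, $Y$, $Z$ individually, functoriality of $\widehat{\ell}_z$, and the point that the derived fiber product $X \times_Z Y$ is generally not smooth so the theorem cannot be applied to it directly).
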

\begin{proof}
This follows immediately since loop spaces commute with fiber products.
\end{proof}

\begin{rmk}\label{RmkTorus}
Note that in the case $G = T$ is a torus, every element of $T$ is central, so $\ell_{z, U}$ is an equivalence.  Furthermore, Proposition \ref{TorusStab} gives an explicit description of the open set $U$, which is maximal, on which $\ell_z$ is an equivalence.  
\end{rmk}

\subsection{Equivariant localization for Hochschild and cyclic homology}

First, let us show that the completion over a closed point of the affinization $[z] \in G//G$ is the same as taking $z$-unipotent loops.
\begin{lemma}\label{UniB}
Let $a: G/G \rightarrow G//G$ be the affinization, and let $a(z) = [z] \in G//G$.  The map above induces an isomorphism on completions
$$\LL^u_z(BG) \simeq \widehat{a^{-1}([z])}/G \subset \LL(BG).$$
In particular, the map $\LL^u_z(BG) \rightarrow \LL(BG)$ factors isomorphically through $\widehat{a^{-1}([z])}/G$.
\end{lemma}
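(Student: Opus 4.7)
The plan is to reduce, via Lemma \ref{FormalNbhd}, to a set-theoretic identification of $a^{-1}([z]) \subset G$, which will then follow from a basic fact in invariant theory for the adjoint action of a reductive group on itself.

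First I would invoke the classical fact that, under Chevalley restriction, $G//G \simeq T/W$ for $T \subset G$ a maximal torus and $W$ the Weyl group, and that the invariant map $G \to G//G$ sends an element $g$ with Jordan decomposition $g = g_s g_u$ to the $W$-orbit of its semisimple part. This holds because every adjoint orbit closure contains a unique closed orbit, namely the orbit of the semisimple part of any element in the closure; equivalently, $g$ and $g_s$ lie in the same fiber since the unipotent $g_u$ sits in a one-parameter subgroup whose closure contains the identity. Consequently, the set-theoretic fiber $a^{-1}([z]) \subset G$ is precisely the set of $g$ whose semisimple part is $G$-conjugate to $z$, which is exactly the saturation
$$Z_U = \{\, g z u g^{-1} \mid g \in G,\ u \in U \cap C_G(z)\,\}$$
appearing in the definition of $\LL^u_z(BG)$. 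Thus $a^{-1}([z])$ and $Z_U$ coincide as closed reduced subsets of $G$.

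The lemma then follows from Lemma \ref{FormalNbhd}: the formal completion of a closed substack of $G/G$ depends only on its underlying reduced classical locus. Combining the two previous observations gives
$$\widehat{a^{-1}([z])}/G \;=\; \widehat{Z_U}/G \;=\; \LL^u_z(BG)$$
as formal substacks of $\LL(BG) = G/G$, where the last identification is built into the definition of $z$-unipotent loops. The ``in particular'' assertion is then immediate, since we have exhibited the map $\LL^u_z(BG) \to \LL(BG)$ as the canonical inclusion of $\widehat{a^{-1}([z])}/G$.

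There is no serious obstacle here: the geometric content is entirely the Jordan decomposition description of the fibers of the adjoint quotient $a$, and the rest is a purely formal application of the fact that formal completion is insensitive to non-reduced or derived enhancements. If anything, one subtle point worth spelling out is that the derived or scheme-theoretic fiber of $a$ over $[z]$ may carry extra structure beyond $Z_U$, but this is washed out upon passage to the reduced classical locus used in Lemma \ref{FormalNbhd}.
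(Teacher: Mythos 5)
There is a genuine gap, and it sits in the step you dismiss as formal.

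You correctly identify $a^{-1}([z])^{\mathrm{red}} = Z_U$ (your Chevalley-restriction phrasing is equivalent to the paper's direct Jordan-decomposition/eigenvalue argument), and the reduction via Lemma \ref{FormalNbhd} to the reduced classical locus is fine. But the final assertion, ``$\widehat{Z_U}/G = \LL^u_z(BG)$ is built into the definition,'' is not correct. The definition declares $\LL^u_z(BG) := \LL^u(BG^z)$, which by Proposition \ref{UnipotentLoopsQuotient} is the completion $\widehat{U_{G^z}}/G^z$ formed \emph{inside} $G^z/G^z$ (then shifted by $z$). The object $\widehat{Z_U}/G$ is a formal completion formed \emph{inside} $G/G$. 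Identifying these two is precisely the substantive content of the lemma, and it is not a tautology: one has to show that the induction map $G \times^{G^z} G^z \to G$ induces an equivalence of formal neighborhoods of $z\,U_{G^z}$ and $Z_U$. That requires (i) knowing that $G \times^{G^z} (z\,U_{G^z}) \to G$ is a closed embedding onto $Z_U$, and (ii) knowing that $G^z/G^z \to G/G$ is \'etale in a neighborhood, which is Proposition \ref{EtaleProp} (the Luna slice theorem) combined with the fact $G^{zu} \subset G^z$ for $u$ unipotent. The paper's proof cites Proposition \ref{UnipotentLoopsQuotient} and Proposition \ref{EtaleProp} exactly for this; your proof never invokes either, so the crucial comparison of formal neighborhoods across the non-isomorphism $G^z/G^z \to G/G$ is missing. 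As written, the argument is circular: the displayed $\simeq$ in the definition of $\LL^u_z(BG)$ is the very claim being established here, not a given.

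To repair the argument, after your set-theoretic identification you should (a) invoke Proposition \ref{UnipotentLoopsQuotient} to write $\LL^u(BG^z) = \widehat{U_{G^z}}/G^z$, (b) use the Jordan-decomposition uniqueness to see the conjugation map $G \times^{G^z} (z\,U_{G^z}) \to G$ is injective with image $Z_U$, and (c) use Proposition \ref{EtaleProp} (together with $G^{zu} \subset G^z$) to conclude the map $G^z/G^z \to G/G$ is \'etale near $Z_U$, so that by Proposition \ref{EtaleCotangent} the formal completions along $z\,U_{G^z}/G^z$ and $Z_U/G$ agree. Only then does Lemma \ref{FormalNbhd} finish the identification with $\widehat{a^{-1}([z])}/G$.
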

\begin{proof}
This follows from Proposition \ref{UnipotentLoopsQuotient} and the observation that the classical reduced fiber over $[z] \in G$ in $G/G$ is isomorphic to $U_{G^z}/G^z$, where $U_{G^z}$ is the unipotent cone of $G^z$.  That is, if $u \in G^z$ is unipotent, then $uz = zu \in G$ has the same eigenvalues as $z$, so $a(z) = a(zu)$, and by Jordan composition any $y \in \mu^{-1}([z])$ can be written uniquely in this way.  Furthermore, $G^{uz} \subset G^z$; letting $U$ be the unipotent elements of $G^z$, it follows from Proposition \ref{EtaleProp} that the map $G \times^{G^z} U \rightarrow G$ is a closed embedding with image $\mu^{-1}([z])$.
\end{proof}

As an application of the above geometric incarnations of equivariant localization, we obtain the following equivariant localization results in Hochschild and cyclic homology.  

\begin{defn}[Completion and localization of Hochschild homology]
Let $G$ be a reductive group.  Note that $HH(\Perf(X/G))$ is naturally a $HH(\Perf(BG)) = k[G]^G$-module.  Let $z \in G$ be a reductive element representing a closed point of $\Spec(k[G]^G) = G//G$.  We denote by $HH(\Perf(X/G))_z$ the localization at the maximal defining $z \in G//G$ and $HH(\Perf(X/G))_{\widehat{z}}$ the completion at $z$.  We define the analgous notions for $HN, HC$ and $HP$, taking care to completed with respect to the ideals $I_z[[u]]$ and $I_z((u))$ where $I_z \subset k[G]^G$ is the ideal defining $z \in G//G$.
\end{defn}

The following is an immediate corollary of the the localization theorem for unipotent loops.
\begin{thm}[Equivariant localization for Hochschild homology]\label{ThmHH}
Let $X$ be a smooth scheme, $G$ a reductive group, and $z \in G$ semisimple.  Then the natural map on completions induced by pullback is an equivalence:
$$\begin{tikzcd} HH(\Perf(X/G))_{\widehat{z}} \arrow[r, "\simeq"] & HH(\Perf(\pi_0(X^z)/G^z))_{\widehat{z}} = \OO(\LL^u_z(\pi_0(X^z)/G^z)).\end{tikzcd}$$
If $z \in G$ is central, the map on localizations is an equivalence:
$$\begin{tikzcd} HH(\Perf(X/G))_{z} \arrow[r, "\simeq"] & HH(\Perf(\pi_0(X^z)/G))_{z} = \OO(\LL(\pi_0(X^z)/G)) \otimes_{\OO_{G//G}} \OO_{G//G, [z]}.\end{tikzcd}$$
\end{thm}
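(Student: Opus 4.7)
My plan is to reduce the theorem to its geometric counterpart Theorem \ref{MainThm} via the geometric model for Hochschild homology. Since $X$ is a smooth quasi-projective scheme and $G$ is reductive, $X/G$ is perfect in the sense of \cite{BZFN:IT}, so $HH(\Perf(X/G)) \simeq \OO(\LL(X/G))$ as an $S^1$-equivariant module over $HH(\Perf(BG)) \simeq \OO(G/G)$, whose global sections are $k[G]^G$. The module structure is induced by the composition $\LL(X/G) \to \LL(BG) = G/G \to G//G$.

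I would next identify the derived completion of this module with global functions on a formal completion of the loop space. By Lemma \ref{UniB}, $\LL^u_z(BG)$ is the pullback of $G/G$ along the formal completion of $G//G$ at $[z]$, and the defining pullback square gives $\LL^u_z(X/G) = \LL(X/G) \times_{G/G} \LL^u_z(BG)$. Using that global functions on an affine map of stacks commute with derived formal completion along an ideal of the base (see Proposition \ref{DerivedCompletion} and the subsequent discussion of formal completions), I obtain
$$HH(\Perf(X/G))_{\widehat{z}} \simeq \OO(\LL^u_z(X/G)),$$
and analogously $HH(\Perf(\pi_0(X^z)/G^z))_{\widehat{z}} \simeq \OO(\LL^u_z(\pi_0(X^z)/G^z))$, where the latter completion is taken with respect to the ideal of $[z]$ in $k[G^z]^{G^z}$, which restricts to the ideal of $[z]$ in $k[G]^G$ along the natural map $BG^z \to BG$. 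Theorem \ref{MainThm} then supplies the $S^1$-equivariant equivalence $\LL^u_z(\pi_0(X^z)/G^z) \simeq \LL^u_z(X/G)$, which passes to global functions to yield the first statement.

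For the central case, Theorem \ref{MainThm} produces a $G$-saturated Zariski open $U \subset G$ containing $z$ on which $\ell_{z,U}$ itself is an equivalence (not merely étale, since $G^z = G$). The image of $U$ in $G//G$ is Zariski open and contains $[z]$, so after taking global sections and localizing the $k[G]^G$-module at the maximal ideal defining $[z]$, only the behavior over $U/G$ is visible, and the equivalence descends to the claimed localization. The main obstacle is the middle step: verifying that derived completion of the $k[G]^G$-module $HH(\Perf(X/G))$ at the maximal ideal of $[z]$ genuinely computes the $S^1$-equivariant global functions on $\LL^u_z(X/G)$. This is a formal but not entirely routine matter of commuting derived completion past global sections on an affine morphism to $G//G$, and handling the unbounded Hochschild complex together with its loop-rotation $S^1$-equivariance with care.
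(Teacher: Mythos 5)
Your proposal follows essentially the same route as the paper: identify $HH(\Perf(X/G))$ geometrically as $\OO(\LL(X/G))$, use Lemma \ref{UniB} to identify derived completion at $[z] \in G//G$ with functions on $z$-unipotent loops, and then apply Theorem \ref{MainThm}. The paper compresses this into a one-line citation (Theorem \ref{MainThm}, the unipotent loops description, and base change for derived completions along closed embeddings), so your write-up is essentially a correct expansion of the same argument. One small point to tighten: in the central case, the image of the $G$-invariant open $U \subset G$ in $G//G$ need not be open in general, but the argument works because the complement $G \setminus U$ is closed and $G$-invariant, hence has closed image under the GIT quotient, and the construction of $U$ in Proposition \ref{NearbyPoints} ensures the full fiber over $[z]$ (the saturation $zU^{\mathrm{uni}}$) lies in $U$, so $[z]$ lies in the open complement of that closed image. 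That is what justifies that localization at $[z]$ only sees behavior over $U/G$.
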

\begin{proof}
Via Lemma \ref{UniB}, we have a natural identification $HH(\Perf(\pi_0(X^z)/G^z))_{\widehat{z}} \simeq \OO(\LL^u_z(X/G))$.  The first claim then follows directly from Theorem \ref{MainThm}, and base change for derived completions along closed embeddings.  The second claim for localizations follows by Theorem \ref{MainThm}, since in that case the map on Hochschild homology is an equivalence for an open set containing $z$.
\end{proof}

\begin{rmk}
For cyclic homology, the situation is a little more delicate.  The formation of cyclic homology $HC$ involves a filtered colimit, the formation of negative cyclic homology $HN$ involves a cofiltered limit, and the formation of periodic cyclic homology involves both.  On the other hand, localization commutes with colimits and finite limits but not cofiltered limits, and completion commutes with limits but not colimits.
\end{rmk}

We first need to introduce a few technical notions regarding mixed complexes, with the aim of proving that in our situation formal completions commute with the Tate construction on Hochschild homology.  Analogous results and arguments can be found in \cite{Ka:CP}.
\begin{defn}
Let $(V, d, \epsilon)$ be a mixed complex.  We define
$$V^{\prod\Tate} = (\prod_k V u^k, d + u\epsilon) \;\;\;\;\;\;\;\;\;\; V^{\oplus\Tate} = (\bigoplus_k V u^k, d + u\epsilon) = (V[u^{-1}, u], d + u\epsilon)$$
where $|u| = 2$.  There are natural maps
$$V^{\oplus\Tate} \rightarrow V^{\Tate} \rightarrow V^{\prod\Tate}.$$
\end{defn}

\begin{rmk}
Lemma 2.6 of \cite{Ka:CP} shows that the Tate construction preserves quasi-isomorphisms, essentially because it is computed via the right spectral sequence.  On the other hand, the other variants above do not preserve quasi-isomorphisms.  In particular, they are not well-behaved in the derived category.
\end{rmk}

\begin{defn}
We say a complex $V$ is \emph{cohomologically bounded below} (respectively, \emph{above}) if $H^i(V) = 0$ for all sufficiently small (respectively, large) $i$.  We say $V$ is \emph{strictly bounded below} (respectively, \emph{above})  if $V$ if $V^i = 0$ for $i$ sufficiently small (respectively, large).
\end{defn}

\begin{lemma}\label{TateProd}
Let $(V, d, \epsilon)$ be a mixed complex.  If $V$ is strictly bounded below, then $V^{\oplus\Tate} \rightarrow V^{\Tate}$ is an isomorphism.  If $V$ is strictly bounded above, then $ V^{\Tate} \rightarrow V^{\prod\Tate}$ is an isomorphism.
\end{lemma}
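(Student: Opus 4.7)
The plan is to show that both asserted maps are in fact strict isomorphisms of chain complexes (not merely quasi-isomorphisms), by a direct cohomological-degree count. The point is that because $|u| = 2$, the cohomological grading constraint on an element automatically forces boundedness of its $u$-support on one side whenever $V$ has bounded cohomological amplitude on that side.

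First I would unpack that an element in cohomological degree $i$ of any of the three complexes is a formal expression $\sum_k v_k u^k$ with $v_k \in V^{i-2k}$, and the three constructions differ only in the allowed $k$-support: $V^{\oplus \Tate}$ demands finite support, $V^{\Tate} = V((u))$ demands support bounded below in $k$, and $V^{\prod \Tate}$ allows arbitrary support. The differentials $d + u\epsilon$ are defined by the same formula in each case, so once the underlying $k$-modules are shown to agree in each cohomological degree, the claim is immediate.

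For the first statement, suppose $V^j = 0$ for $j < N$. Then the condition $v_k \in V^{i-2k}$ forces $v_k = 0$ whenever $i - 2k < N$, i.e. whenever $k > (i-N)/2$, so the $k$-support of any element of cohomological degree $i$ is automatically bounded above. Combining this upper bound with the lower bound on the $k$-support built into $V^{\Tate}$ gives finite support, so $(V^{\oplus \Tate})^i = (V^{\Tate})^i$ as subsets of $\prod_k V^{i-2k}$. Dually, for the second statement, if $V^j = 0$ for $j > M$, then $v_k = 0$ whenever $k < (i-M)/2$, so the ``bounded below in $k$'' condition carving $V^{\Tate}$ out of $V^{\prod \Tate}$ is already enforced, and the two complexes coincide degree by degree.

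I do not anticipate any genuine obstacle; the statement is essentially an observation about how the cohomological grading on $u$ interacts with the three completion conventions, and the proof reduces to bookkeeping with the inequality $|v_k| = i - 2k$. The only subtlety worth flagging is that the lemma really does produce strict equalities on the nose, which is stronger than the quasi-isomorphism one might have expected, and is what will make these constructions well-behaved in the applications to follow.
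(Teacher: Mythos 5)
Your argument is correct, and it is the same degree-counting observation that the paper gives (attributing the first statement to Kaledin's Corollary 2.7). Your reading of $V^{\Tate}$ in cohomological degree $i$ as formal sums $\sum_k v_k u^k$ with $v_k \in V^{i-2k}$ and $u$-support bounded below is exactly what $V^{S^1} \otimes_{k[[u]]} k((u)) = V^{S^1}[u^{-1}]$ produces, i.e.\ genuine Laurent series. One thing worth flagging: the displayed formula in the paper's own proof, $(\bigoplus_{k \geq 0} u^k V^{n-2k}) \times (\prod_{k < 0} u^k V^{n-2k})$, appears to have the $\bigoplus$ and $\prod$ transposed relative to this (with that formula as written the lemma's two hypotheses would need to be swapped); under the correct description the paper's phrase ``the product is finite'' refers to the $\prod_{k \geq 0}$ factor, which the assumption $V^j = 0$ for $j \ll 0$ truncates, so your bookkeeping is the right one.
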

\begin{proof}
The proof of the first statement appears as Corollary 2.7 in \cite{Ka:CP}, which we repeat for convenience.  The chain complex $V^{\oplus \Tate}$ is, in the $n$th cohomological degree, the vector space $\bigoplus_k u^k V^{n-2k}$.  whereas $V^{\Tate}$ is in the $n$th cohomological degree the vector space $(\bigoplus_{k \geq 0} u^k V^{n-2k}) \times (\prod_{k < 0} u^k V^{n-2k}$.  Since $V^{n-2k} = 0$ for large $k$, the product is finite and therefore a direct sum, so the map is an isomorphism.  A similar argument proves the second statement. 
\end{proof}

\begin{lemma}\label{TateLimitColimit}
Let $(V_\alpha, d_\alpha, \epsilon_\alpha)$ be a degree-wise Mittag-Leffler sequential diagram of mixed complexes such that the $V_\alpha$ are uniformly cohomologically bounded above (i.e. right $t$-bounded).  Then, the functors $(-)_{S^1}$ and $(-)^{\Tate}$ commute with limits, i.e.
$$(\lim_\alpha V_\alpha)^{\Tate} \simeq \lim_\alpha V_\alpha^{\Tate}.$$
\end{lemma}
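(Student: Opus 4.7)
The plan is to reduce to the strictly cohomologically upper-bounded setting, where both $(-)^{\Tate}$ and $(-)_{S^1}$ admit concrete presentations in each cohomological degree that manifestly commute with sequential inverse limits, and then to exploit the Mittag-Leffler hypothesis to bridge ordinary and derived limits.

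First I would invoke the degree-wise Mittag-Leffler condition: for any sequential Mittag-Leffler inverse system of vector spaces, $R^1\lim$ vanishes, so the ordinary limit $\lim_\alpha V_\alpha$ coincides with the derived limit $R\lim_\alpha V_\alpha$ in each cohomological degree. Next I would replace each $V_\alpha$ by its canonical truncation $\tau^{\leq N} V_\alpha$, where $N$ is the uniform cohomological upper bound. This is still a mixed complex because the mixed differential $\epsilon$ commutes with $d$ and hence preserves $\ker(d\colon V_\alpha^N \to V_\alpha^{N+1})$, and the inclusion $\tau^{\leq N} V_\alpha \hookrightarrow V_\alpha$ is a quasi-isomorphism since the cohomology above degree $N$ vanishes. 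By the homotopy invariance of the Tate construction (Lemma 2.6 of \cite{Ka:CP}) and of the coinvariants construction, the values of both functors at $V_\alpha$ are unaffected by this replacement.

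The main obstacle will be confirming that the truncated diagram retains the degree-wise Mittag-Leffler property, since in general ML is \emph{not} preserved under passage to kernels of transition maps in a short exact sequence of inverse systems (one can exhibit $0 \to A \to B \to C \to 0$ of sequential inverse systems of vector spaces with $B$ and $C$ Mittag-Leffler but $A$ not). I would address this by first replacing the diagram with one whose terms are degree-wise injective, where passage to kernels is sufficiently well-behaved; alternatively, in the intended applications the Mittag-Leffler condition comes from a $u$-adic or filtration-induced source where compatibility with truncation can be verified directly.

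Finally, in the strictly upper-bounded setting, Lemma \ref{TateProd} identifies $V_\alpha^{\Tate}$ with $V_\alpha^{\prod\Tate}$, which in cohomological degree $n$ is the infinite product $\prod_{k \geq \lceil (n-N)/2 \rceil} V_\alpha^{n-2k} u^k$; infinite products commute with sequential inverse limits on the level of underlying graded vector spaces, and the differential $d + u\epsilon$ assembles compatibly. Similarly $(V_\alpha)_{S^1}$ is in cohomological degree $n$ the finite direct sum $\bigoplus_{0 \leq j \leq (N-n)/2} V_\alpha^{n+2j} u^{-j}$, which trivially commutes with inverse limits. Combined with the Mittag-Leffler identification of ordinary and derived limits, this yields the desired natural equivalences $(\lim_\alpha V_\alpha)^{\Tate} \simeq \lim_\alpha V_\alpha^{\Tate}$ and $(\lim_\alpha V_\alpha)_{S^1} \simeq \lim_\alpha (V_\alpha)_{S^1}$.
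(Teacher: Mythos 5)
Your proof follows the same strategy as the paper's: use the Mittag-Leffler hypothesis to identify $\lim$ with $R\lim$, reduce to the strictly-bounded-above case by canonical truncation (invoking the homotopy invariance of the Tate construction, Lemma 2.6 of \cite{Ka:CP}, to justify the replacement), and then apply Lemma \ref{TateProd} to identify $V^{\Tate}$ with $V^{\prod\Tate}$, which is degree-wise a product and therefore visibly commutes with inverse limits. The handling of $(-)_{S^1}$ as a finite direct sum in each degree is also the same in spirit. Your write-up is a more careful elaboration of the paper's four-sentence sketch.

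You are right to flag the claim that the truncated diagram $\{\tau^{\leq N}V_\alpha\}$ remains degree-wise Mittag-Leffler: the paper states this without justification, and it is in fact false in general. Take $V_\alpha$ concentrated in cohomological degrees $N$ and $N+1$ with $V_\alpha^N = k\{e_1, e_2, \ldots\}$ (fixed, infinite-dimensional), $V_\alpha^{N+1} = k\{f_1, \ldots, f_\alpha\}$, $d_\alpha(e_i) = f_i$ for $i \le \alpha$ and $d_\alpha(e_i) = 0$ otherwise, $\epsilon = 0$, with transition maps the identity on $V^N$ and the forgetful surjection on $V^{N+1}$. This is a degree-wise Mittag-Leffler system (identity resp.\ surjective transitions), uniformly cohomologically bounded above by $N$ (each $d_\alpha$ is onto so $H^{N+1} = 0$), but $(\tau^{\leq N} V_\alpha)^N = \ker d_\alpha = k\{e_{\alpha+1}, e_{\alpha+2}, \ldots\}$ forms a strictly decreasing chain under the inclusion transitions and is therefore not Mittag-Leffler. (In this particular example the conclusion of the Lemma is not in danger, because in each degree $V_\alpha^{\Tate}$ receives only one nonzero contribution; but it does show that the asserted stability under truncation cannot be invoked in general.) This example also undercuts your proposed fallback for the intended application: surjectivity of the transitions (which is what Remark \ref{EquivariantCyclicProjective} provides in Theorem \ref{ThmHC}) does not rescue Mittag-Leffler for the kernel of the top differential, and the degree-wise injective replacement route is left as a sketch whose compatibility with the Mittag-Leffler hypothesis would itself need to be verified. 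So the gap you identify is real, and it is shared with the paper's proof; neither version resolves it.
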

\begin{proof}
First, since the limit is degree-wise Mittag-Leffler, $R\lim = \lim$.  Note that since the $V_\alpha$ are uniformly cohomologically bounded above, so is their limit.  Since we are only interested in computing the usual Tate construction which respects quasi-isomorphisms, we can replace each of the $V_\alpha$ and $\lim V_\alpha$ with their (strictly bounded above) truncations; the resulting complex is still Mittag-Leffler.  Now, note that the $\prod\Tate$ construction commutes with limits. 
\end{proof}

\begin{thm}[Equivariant localization for cyclic homology]\label{ThmHC}
Let $X$ be a smooth scheme, $G$ a reductive group, and $z \in G$ semisimple.  Then the following maps on completions induced by pullback are equivalences
$$\begin{tikzcd} HN(\Perf(X/G))_{\widehat{z}} \arrow[r, "\simeq"] & HN(\Perf(\pi_0(X^z)/G^z))_{\widehat{z}} = \OO(\LL^u_z(\pi_0(X^z)/G^z))^{S^1}, \end{tikzcd}$$
$$\begin{tikzcd} HC(\Perf(X/G))_{\widehat{z}} \arrow[r, "\simeq"] & HC(\Perf(\pi_0(X^z)/G^z))_{\widehat{z}}= \OO(\LL^u_z(\pi_0(X^z)/G^z))_{S^1}, \end{tikzcd}$$
$$\begin{tikzcd} HP(\Perf(X/G))_{\widehat{z}} \arrow[r, "\simeq"] & HP(\Perf(\pi_0(X^z)/G^z))_{\widehat{z}}= \OO(\LL^u_z(\pi_0(X^z)/G^z))^{\Tate}. \end{tikzcd}$$
\end{thm}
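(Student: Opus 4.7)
The strategy is to leverage the $S^1$-equivariant equivalence of mixed complexes
$$HH(\Perf(X/G))_{\widehat z} \simeq HH(\Perf(\pi_0(X^z)/G^z))_{\widehat z}$$
provided by Theorem \ref{ThmHH}. Since $HN$, $HC$, and $HP$ are obtained from $HH$ by applying $(-)^{S^1}$, $(-)_{S^1}$, and $(-)^{\Tate}$ respectively, and these functors preserve $S^1$-equivariant equivalences, it suffices to establish, for each of the two Hochschild complexes $V$, natural equivalences $(V^{S^1})_{\widehat z} \simeq (V_{\widehat z})^{S^1}$, $(V_{S^1})_{\widehat z} \simeq (V_{\widehat z})_{S^1}$, and $(V^{\Tate})_{\widehat z} \simeq (V_{\widehat z})^{\Tate}$, i.e., that derived completion at $I_{[z]} \subset k[G]^G$ commutes with each of the three $S^1$-functors.

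For the $HN$ case, I would exploit that derived completion is computed by the sequential limit $R\lim_n(- \otimes^L_A A/I_{[z]}^n)$ of Proposition \ref{DerivedCompletion}, while $(-)^{S^1}$ is itself the sequential limit $\lim_m(V[u]/u^m, d + u\epsilon)$. These limits commute by the usual Fubini property for $R\lim$, provided that tensoring with the perfect $A$-module $A/I_{[z]}^n$ (perfect because $A = k[G]^G$ is regular noetherian near $[z]$, using reductivity of $G^z$) commutes with the $u$-adic limit; but a perfect tensor is a finite iterated cone, which evidently commutes with any limit.

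For the $HP$ case, the Tate construction does not commute with arbitrary limits, so I would invoke Lemma \ref{TateLimitColimit}, which supplies the commutation provided the diagram is degree-wise Mittag-Leffler and uniformly cohomologically bounded above. For the diagram $\{V \otimes^L_A A/I_{[z]}^n\}_n$ computing $V_{\widehat z}$, the Mittag-Leffler condition follows from the Koszul-type resolution structure of $A/I_{[z]}^n$, and the uniform cohomological boundedness reduces to the cohomological boundedness of $HH(\Perf(X/G))$ itself (which holds for smooth quasiprojective quotients by reductive groups, via the equivariant cyclic bar complex preceding Remark \ref{EquivariantCyclicProjective}) combined with the bounded Tor-amplitude of $A/I_{[z]}^n$ over $A$. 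With these hypotheses verified, Tate commutes past the limit defining completion, and the residual commutation with $-\otimes^L_A A/I_{[z]}^n$ follows since $A/I_{[z]}^n$ is perfect.

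For the $HC$ case, I would invoke the Tate fiber sequence of mixed complexes relating $V_{S^1}$, $V^{S^1}$, and $V^{\Tate}$, together with the exactness of derived completion on the stable $\infty$-category of $A$-modules. Applying derived completion to this fiber sequence and using the $HN$ and $HP$ commutations established above, the two-out-of-three property for fiber sequences yields the $HC$ commutation. The main obstacle I expect is the uniform cohomological boundedness hypothesis of Lemma \ref{TateLimitColimit}; although plausible for smooth quotients by reductive groups, it requires careful estimation of how Hochschild degrees behave after the derived tensor with $A/I_{[z]}^n$, uniformly in $n$.
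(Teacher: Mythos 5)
Your overall strategy is the same as the paper's: reduce everything to the $HH$ equivalence from Theorem \ref{ThmHH} and show that derived completion at $I_{[z]}$ commutes with each of the $S^1$-functors, with Lemma \ref{TateLimitColimit} as the key commutation criterion. Your two-out-of-three reduction for $HC$ via the norm fiber sequence $V_{S^1}[1] \to V^{S^1} \to V^{\Tate}$ is a perfectly valid alternative to the paper's treatment (which applies Lemma \ref{TateLimitColimit} to $(-)_{S^1}$ directly). So the skeleton is correct.

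The gap is in how you verify the hypotheses of Lemma \ref{TateLimitColimit}. You justify commuting a derived tensor $-\otimes^L_{k[G]^G} k[G]^G/I_{[z]}^n$ past the $u$-adic limit by claiming $k[G]^G/I_{[z]}^n$ is perfect, on the grounds that $k[G]^G$ is regular near $[z]$ ``using reductivity of $G^z$.'' That inference does not hold: regularity of $G//G$ at a closed point is an independent question about the group (compare Steinberg's theorem, which requires simple connectedness), and reductivity of $G^z$ does not grant it. It also is not what the paper uses. The paper instead works with the equivariant cyclic bar complex $C_\bullet(A,G)$ and invokes Remark \ref{EquivariantCyclicProjective}: the \emph{terms} of $C_\bullet(A,G)$ are projective, hence flat, over $k[G]^G$. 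This means that $C_\bullet(A,G)\otimes^L_{k[G]^G} k[G]^G/(f_1^n,\ldots,f_r^n)$ is computed classically by the underived quotient, so the transition maps in the limit diagram are degree-wise surjections --- giving Mittag-Leffler for free --- and the quotients share the cohomological upper bound of $C_\bullet(A,G)$, giving uniform boundedness. Your invocation of a ``Koszul-type resolution structure'' for Mittag-Leffler is exactly the point that needs this projectivity input; as written it is not justified. (For the $HN$ step, one can alternatively replace $A/I^n$ by the Koszul complex $K_n^\bullet$ of $f_1^n,\ldots,f_r^n$, which is always a perfect complex regardless of the regularity of $k[G]^G$; this salvages that part of your argument without any claim about $G//G$.)
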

\begin{proof}
Via Lemma \ref{UniB}, we have a natural identification $HH(\Perf(\pi_0(X^z)/G^z))_{\widehat{z}} \simeq \OO(\LL^u_z(X/G))$.  If we can show that the limit diagram in the formation of the derived completion satisfies the conditions of Lemma \ref{TateLimitColimit}, then the claims would follow via Theorem \ref{ThmHH}.

Take $A \in \Alg(\QCoh(BG))$ to be as in Proposition \ref{GeneratorQuotientStack}.  First, note that $HH(\Perf(X/G)) \simeq C_\bullet(A, G)$ is cohomologically bounded above since $A$ is cohomologically bounded.  By Remark \ref{EquivariantCyclicProjective}, its terms are projective, so the terms in the limit computing its derived completion with respect to an ideal of $k[G]^G$ are classical quotients.  Therefore, the limit diagram satisfies the conditions of Lemma \ref{TateLimitColimit}.
\end{proof}

\begin{rmk}
While the statement in the above theorem for negative cyclic homology was more or less automatic, the statements for cyclic homology and periodic cyclic homology are strongly dependent on the cohomological right-boundedness of Hochschild homology, which we expect to fail for $\Coh(X/G)$ where $X$ is singular.
\end{rmk}

\section{An Atiyah-Segal completion theorem for periodic cyclic homology}\label{HPSection}

\subsection{Twisted circle actions on loop spaces}\label{TwistedActionSection}

\begin{defn}
Let $G$ be an affine algebraic group over $k$.  A \emph{$G$-action} on a prestack $X$ is defined to be a prestack $Y$ over $BG$, along with an identification $Y \times_{BG} \pt \simeq X$.  We often abuse notation and write $Y = X/G$ and understand that the identification implicitly.  Furthermore, if $Z \subset G$ is a closed normal subgroup, a \emph{trivialization} of the action of $Z$ on $X$ is defined to be a $G' := G/Z$-action on $X$ along with an identification $X/G' \times_{BG'} BG \simeq X/G$.  If $z \in G$ is central, then it generates a normal closed subgroup $Z$; we sometimes write \emph{$z$-trivialization} to mean a $Z$-trivialization.

Furthemore, note that any $X$ equipped with a $z$-trivialization is equipped with a \emph{shift map} $\mu_z: X/G \rightarrow X/G$ defined as follows.  It is defined on $BG$ via the map of groups $G \rightarrow G$ taking $g \mapsto zg = gz$.  It is defined on $X/G$ by transporting the map $\mathrm{id}_{X/G'} \times_{\mathrm{id}_{BG'}} \mu_z$. across the trivialization $X/G \simeq X/G' \times_{BG'} BG$.  It is evidently an equivalence with inverse $\mu_{z^{-1}}$.
\end{defn}

\begin{exmp}
If $X$ is a classical scheme with a $G$-action, on which a normal subgroup $Z \subset G$ acts trivially, then there is a canonical trivialization of the action of $Z$ on $X$.
\end{exmp}

\begin{defn}[Twisted $S^1$-actions] \label{CircleActDefn}
Let $G$ a linear algebraic group over $k$ and $X$ a scheme over $k$ with an action of $G$.  We define the following various circle actions on $X/G$.  Let $z \in G$ be central.
\begin{itemize}
\item The \emph{loop rotation $S^1$-action}, denoted $\rho$, on the loop space $\LL(X/G)$.
\item The \emph{$z$-twisting $S^1$-action}, denoted $\tau_z$, on $BG$ is induced via the map of groups $\Z \times G \rightarrow G$ defined by $(n, g) \mapsto z^n g = g z^n$.  More generally, if $X$ is equipped with a $z$-trivialization, then the corresponding \emph{$z$-twisting $S^1$ action} on $X/G$ is defined via the identification $X/G \simeq X/G' \times_{BG'} BG$, where $S^1$ acts on $X/G'$ and $BG'$ trivially, but on $BG$ via the $z$-twisting described above.  It is evident that the maps defining the fiber product diagram are $S^1$-equivariant.

\item The \emph{$z$-twisted rotation}, denoted $\rho_z$, on $\LL(X/G)$ is the diagonal of the $S^1 \times S^1$-action $\rho \times \LL \tau_z$.  Note that it makes sense to talk about the diagonal since $\rho$ and $\LL \tau_z$ commute: $\rho$ commutes with any group action of the form $\LL \gamma$, where $\gamma$ is an $S^1$-action on $X/G$.
\end{itemize}
Furthermore, for $w \in G$ central, the map $\mu_z: X/G \rightarrow X/G$ is $S^1$-equivariant with respect to $\tau_{zw}$ on the left and $\tau_{w}$ on the right.  In particular, $\LL(\mu_z): \LL(X/G) \rightarrow \LL(X/G)$ is equivariant with respect to $\rho_{zw}$ on the left and $\rho_{z}$ on the right.
\end{defn}


\begin{rmk}\label{RmkTwisting}
Using Proposition \ref{LoopQuotient}, we see that the shift map
$\LL(\mu_z): \LL(X/G) \rightarrow \LL(X/G)$
has an explicit realization as the multiplication by $z$ map $G/G \rightarrow G/G$ on the upper-right term in the fiber product
	$$\begin{tikzcd}
	\LL(X/G) \times_{BG} \pt \arrow[r] \arrow[d] & (X \times G)/G \arrow[d, "a \times p"] \\
	X/G \arrow[r, "\Delta"] & (X \times X)/G.
	\end{tikzcd}$$
%
%
%
\end{rmk}

\begin{exmp}
On $\Perf(G/G)$, the rotation $\rho$ acts on fibers over $g \in G$ by $g$; the $z$-twisting $\LL \tau_z$ acts on fibers over any $g \in G$ by $z$, and the twisted rotation $\rho_z$ acts on fibers over $g \in G$ by $gz = zg$.
\end{exmp}

\begin{exmp}\label{BTExmp}
Let $G = T$ be a torus (in particular, every $t \in T$ is central).  We can explicitly describe the $S^1$-actions on linear categories
$$\Perf(\LL(BT)) = \Perf(T \times BT) \simeq \bigoplus_{\lambda \in \Lambda} \Perf(T)$$
where $\Lambda$ is the character lattice of $T$.  Let $\{z^\lambda \mid \lambda \in \Lambda\}$ denote the natural basis of monomials for $k[T]$.  The rotation $S^1$-action $\rho$ acts on the $\lambda$-summand by $z^\lambda$.  The $t$-twisting action $\tau_t$ on $BT$ acts on the $\lambda$-summand by the scalar $z^\lambda(t)$.  The $t$-twisted rotation $\rho_t$ acts on the $\lambda$-summand by $z^{\lambda}(t)z^\lambda$.

Let us take the $t$-twisted rotation $\rho_t$.  We have, via the category\footnote{For $M$ a scheme and $f: M \rightarrow \mathbb{G}_m$, the category $\PreMF(M, f)$ is the category $\Perf(M \times_{\mathbb{G}_m} \{1\})$ with an extra $k[u]]$-linear structure acting by cohomological operators.} $\PreMF$ in \cite{Pr:MF},
$$\Perf(\LL(BT))^{S^1} = \bigoplus_{\lambda \in \Lambda} \PreMF(T, 1 - z^\lambda(t) \cdot  z^{\lambda}),$$
$$\Perf(\LLf(BT))^{S^1} = \bigoplus_{\substack{\lambda \in \Lambda \\ \lambda(t) = 1}} \PreMF(\widehat{T}, 1 - z^\lambda).$$
For the second identity, the zeros of $1 - z^\lambda(t) z^\lambda$ meet the constant loops if and only if $z^\lambda(t) = 1$.
After passing to the Tate category under the rotation action, we note that the zero locus of $1 - z^\lambda$ is smooth (it is a subgroup of $T$) of codimension 1 unless $\lambda = 0$ (in which case it has codimension zero and must be derived).  Therefore,
$$\Perf(\LL(BT))^{\Tate} = \bigoplus_{\lambda \in \Lambda} \mathrm{MF}(T, 1 - z^\lambda(t) \cdot z^{\lambda}) = \Perf(T) \otimes_k k(u)),$$
$$\Perf(\LLf(BT))^{\Tate} = \bigoplus_{\substack{\lambda \in \Lambda \\ \lambda(t) = 1}} \mathrm{MF}(\widehat{T}, 1 - z^\lambda) = \Perf(\widehat{T}) \otimes_k k((u)).$$
Note that the Tate categories do not depend on the twisting at all (but the $S^1$-invariant categories do).
\end{exmp}

\begin{prop}\label{CircleActionsSame}
Suppose $z \in G$ is a central element of a reductive group and acts on a quasiprojective scheme $X$ trivially.  Then, there is a $k[[u]]$-linear equivalence
$$\LL(\mu_z)^*: \begin{tikzcd} \OO(\LL(X/G))^{S^1, \rho} \arrow[r, "\simeq"] & \OO(\LL(X/G))^{S^1, \rho_z}.\end{tikzcd}$$
The same holds for formal and unipotent loop spaces.
\end{prop}
\begin{proof}
Since $X$ is quasiprojective, it has a compact $G$-equivariant generator $\mathcal{E}$ of $\QCoh(X)$.  Let $A = R\Hom_X(\mathcal{E}, \mathcal{E})$, so that $\QCoh(X) \simeq A\dmod_{\QCoh(BG)}$.  Let $c: A \rightarrow A \otimes k[G]$ be the coaction; note that the $z$-twisted rotation is given by twisting the coaction by $z$, i.e. $c_z(a)(-, g) := c(a)(-, gz) = c(a)(-, zg)$.  Since $z$ acts on $X$ trivially, this is equal to the usual coaction $c$, inducing an equivalence of $S^1$-invariants under the untwisted and $z$-twisted rotations. 
\end{proof}

%

\subsection{Tate-equivariant functions on formal loop spaces compute analytic de Rham cohomology}

We now set out to prove the equivariant localization theorem for periodic cyclic homology.  We first introduce some technical notions needed to phrase the result in the 2-periodic setting.  Recall the following notions for vector spaces (not chain complexes) from \cite{Be:TA}.
\begin{defn}
A \emph{linear topological vector space} is a a vector space $V$ which admits a topology for which the vector space operations are continuous, and such that there is a system of neighborhoods at $0$ consisting of subspaces.  In this case, the topology is generated by this system at $0$ and translations under addition.  The \emph{completion} $\widehat{V}$ of $V$ is the limit over the system of neighborhoods $U_\alpha$:
$$\widehat{V} := \lim_{0 \in U_\alpha} V/U_\alpha.$$
We say the topology is \emph{complete} if the natural map $V \rightarrow \widehat{V}$ is an isomorphism.  Let $V_1, V_2$ be linear topological vector spaces.  
We define a linear topological vector space, the \emph{$!$-tensor product} $V_1 \otimes^! V_2$, via the naive tensor product on underlying vector spaces equipped with topology by the basis consisting of open sets of the form $U_1 \otimes V_2 + V_1 \otimes U_2$, where $U_1 \subset V_1$ and $U_2 \subset V_2$ are opens.  We define $V_1 \,\widehat{\otimes^!}\, V_2$ to be $V_1 \otimes^! V_2$ completed with respect to this topology.

These notions generalize immediately to chain complexes, where we replace the notion of subspace with subcomplex.  In this case, the complexes term-wise satisfy the Mittag-Leffler condition and therefore $\lim^1 = 0$, so $R\lim = \lim$.  Note that as in \cite{Ka:CP}
$$V_1 \,\widehat{\otimes^!}\,V_2 = \lim_{V_1} \lim_{V_2} V_1 \otimes V_2.$$
\end{defn}

\begin{rmk}
It is unclear to us how the notion of a topological chain complex should interact with quasi-isomorphisms.
\end{rmk}


%

We now review the constructions and results of \cite{Bh:DDR} and \cite{Ha:DR}.
\begin{defn}
Let $X$ be a finite-type derived stack with affine diagonal over $k$, and $\mathbb{L}_X$ its cotangent complex.  The \emph{derived de Rham complex} $\dR_X$ is the sum-totalization of the complex  $(\bigoplus_{n \geq 0} \bigwedge^n \mathbb{L}_X[-n], d_{dR})$, which comes equipped with a \emph{Hodge filtration} $F^k \dR_X = (\bigoplus_{n \geq k} \bigwedge^n \mathbb{L}_X[-n], d_{dR})$.  The \emph{Hodge-completed derived de Rham complex} $\hcdR_X$ is the completion of $\dR_X$ with respect to the Hodge filtration (see Construction 4.1 in \cite{Bh:DDR}).  We will denote the derived global sections of this complex to be the complex of \emph{derived de Rham cochains} $C^\bullet_{dR}(X; k)$.
\end{defn}

We have two competing notions of Hodge filtrations in the negative cyclic homology of $\Perf(X/G)$.  Both play an essential role.
\begin{defn}
Let $(V, d, \epsilon)$ be a mixed complex.  The \emph{noncommutative Hodge filtration} on $V^{S^1}$ and $V^{\Tate}$ is the decreasing filtration defined by the subspaces $u^k V^{S^1}$.  
\end{defn}

\begin{defn}
The odd tangent complex $\mathbb{T}_X[-1]$ is affine over $X$, so we can consider $\OO_{\mathbb{T}_X[-1]}$ as an algebra in the category $\QCoh(X)$. Furthermore, the ideal sheaf for the zero section defines an exhaustive decreasing filtration on $\OO_{\mathbb{T}_X[-1]}$, which induces an exhaustive decreasing filtration on $\OO(\mathbb{T}_X[-1])^{S^1}$ whose completion is $\OO(\widehat{\mathbb{T}}_X[-1])^{S^1}$.  We call this the \emph{geometric Hodge filtration}.
\end{defn}

\begin{prop}\label{StackCoh}
Let $X$ be a geometric stack with a smooth cover by a variety.  There is a natural quasi-isomorphism
$$\begin{tikzcd}\OO(\LLf(X))^{\Tate} \arrow[r, "\simeq"] & C^\bullet_{dR}(X; k) \,\widehat{\otimes}^!\, k((u))\end{tikzcd}$$
where we consider the de Rham complex $C^\bullet_{dR}(X; k)$ as a topological chain complex with respect to the derived Hodge filtration, and $k((u))$ with respect to the noncommutative Hodge filtration.
\end{prop}
\begin{proof}
By Theorem 6.9 of \cite{BZN:LC}, the exponential map $\widehat{\mathbb{T}}_{X}[-1] \rightarrow \LLf(X)$ is an filtration-preserving isomorphism, so we can compute $\OO(\widehat{\mathbb{T}}_X[-1])^{\Tate}$ instead.  Note that $(-)^{S^1}$ commutes with totalization, so we first compute
$$\OO(\widehat{\mathbb{T}}_X[-1])^{S^1} = \lim_{n \rightarrow \infty} \lim_{m \rightarrow \infty} \left( \bigoplus_{\substack{0 \leq i \leq n \\ 0 \leq j \leq m}} R\Gamma(X, \Omega^i_X[i])u^j,\;\; u \cdot d_{dR}\right).$$
Note that the filtration defined by the limit parameter $n$ is the geometric Hodge filtration, and the filtration defined by the limit parameter $m$ is the non-commutative Hodge filtration.  As we take the limit with respect to both, it amounts to computing the direct sum complex $(\bigoplus_{\substack{0 \leq i,j}} R\Gamma(X, \Omega^i_X[i])u^j,\;\; u \cdot d_{dR})$
with respect to the opens
$$U_{nm} = \left( \bigoplus_{\substack{i \geq n \text{ or}\\ j \geq m}} R\Gamma(X, \Omega^i_X[i])u^j,\;\; u \cdot d_{dR}\right).$$

There is a subcomplex (in fact, a direct summand) of the direct sum complex defined by taking the summands where $j \geq i$: 
$$V = \left( \bigoplus_{\substack{0 \leq i \leq j}} R\Gamma(X, \Omega^i_X[i])u^j,\;\; u \cdot d_{dR}\right).$$
It is a subcomplex since the de Rham differential takes the $(i, j)$-summand to the $(i+1, j+1)$-summand.  Its quotient
$$V' = \left( \bigoplus_{\substack{0 \leq j < i}} R\Gamma(X, \Omega^i_X[i])u^j,\;\; u \cdot d_{dR}\right)$$
is $u$-torsion.  Note that $k((u))$ is flat as a (dg) $k[[u]]$-module, so $- \otimes_{k[[u]]} k((u))$ kills $u$-torsion modules, and in particular we have an equivalence induced by the inclusion on completions
$$\widehat{V} \otimes_{k[[u]]} k((u)) \simeq \OO(\widehat{\mathbb{T}}_X[-1])^{\Tate}.$$


Consider the alternative topology on $V$ defined by opens
$$W_{km} = \left( \bigoplus_{\substack{i-j \geq k \text{ or}\\ j \geq m}} R\Gamma(X, \Omega^i_X[i])u^j,\;\; u \cdot d_{dR}\right).$$
The parameter $k$ in this topology defines the derived Hodge filtration in the derived de Rham complex, and the parameter $m$ in this filtration defines a $u$-adic filtration.  In particular, the completion of $V$ with respect to the topology defined by the $W_{km}$ is $\Gamma(X, \widehat{\dR}_X) \,\widehat{\otimes}^!\,k[[u]]$.

We claim these two topologies defined by $U_{nm}$ and $V_{km}$ are equivalent.  Indeed, this is an easy verification as all indices are bounded below.  Explicitly, $W_{nm} \subset U_{nm}$ and $U_{k+m,m} \subset W_{km}$.  In particular, the completion with respect to this topology is the limit under the usual Hodge filtration on each summand of $V$ defined by letting $i - j$ be constant, and the limit under the noncommutative Hodge filtration.  Thus, via the universal property of the limit, we have a canonical equivalence
$$\widehat{V} \simeq \Gamma(X, \widehat{\dR}_X) \,\widehat{\otimes}^!\,k[[u]]$$
and in particular, 
$$\OO(\LLf(X))^{\Tate} \simeq V \otimes_{k[[u]]} k((u)) \simeq \Gamma(X, \widehat{\dR}_X) \,\widehat{\otimes}^!\, k((u)) \simeq C^\bullet_{dR}(X; k) \,\widehat{\otimes^!}\, k((u)).$$
\end{proof}

\begin{rmk}
The above proposition is false if we do not consider the topologies.  For example, take $X = B\G_m$.  Then, we have $\OO(\LLf(B\G_m)) = k[[[t]]$ where $|t| = 0$, and in particular, $H^0(\OO(\LLf(B\G_m))^{\Tate}) = k[[t]]$.  On the other hand, $H^\bullet(B\G_m;k) \simeq k[s] = k[[s]]$ where $|s| = 2$, so $H^0(H^\bullet(B\G_m;k)((u))) = k[su^{-1}]$.
\end{rmk}

Finally, we discuss results relating derived de Rham cohomology to analytic (Betti) cohomology when $k = \C$.  It is well-known by experts and is essentially a simple corollary of results in \cite{Bh:DDR}, \cite{Ha:DR} and \cite{BZN:LC}.  A general discussion can also be found in the introduction of \cite{Ka:CP}.  
We first define a few intermediate chain complexes.
\begin{defn}
For a \emph{choice} of embedding of $i: X \rightarrow M$ for smooth $M$, the \emph{Hartshorne algebraic de Rham complex} $\Omega^H_X$ is defined by
$$\Omega^H_X := \widehat{i^{-1}\OO_M} \otimes_{i^{-1}\OO_M} i^{-1}\Omega^\bullet_Y.$$

We define the \emph{term-wise Hodge-completed derived de Rham complex} associated to a groupoid presentation $\begin{tikzcd} U_1 \arrow[r, shift left] \arrow[r, shift right] &U_0 \end{tikzcd}$ of a stack as above to be the totalization of the double complex $\hcdR_{U_j}$.  Since for schemes $U_i$, the de Rham cohomology computes Betti cohomology, Lemma 32 of \cite{Beh:CohStacks} implies that the cohomology of this complex is independent of choice of cover.
\end{defn}

Corollary 4.27 in \cite{Bh:DDR} and Theorem 1.1 in Chapter IV of \cite{Ha:DR} can be summarized in the following statement, identifying the derived de Rham cohomology of a possibly singular scheme $X$ over $\C$ with its Betti cohomology. 
For details, see Construcion 4.25 in \cite{Bh:DDR}.
\begin{thm}[Bhatt, Hartshorne]\label{BhattThm}
Let $X$ be a finite type scheme over $k=\C$.  There is a natural map of sheaves of dg $k$-algebras $\hcdR_X \rightarrow \Omega^H_X$ on $X$ 
which is a quasi-isomorphism.  Letting $j: X^{an} \rightarrow X$ be the analytification map, and $\Omega^\bullet_{X^{an}}$ the analytic de Rham complex on $X^{an}$, the map of dg $k$-algebras $j^{-1} \Omega^H_X \rightarrow \Omega^\bullet_{X^{an}}$
is a quasi-isomorphism.  All together, the map
$j^{-1} \hcdR_X \rightarrow \Omega^\bullet_{X^{an}}$
is a quasi-isomorphism, and $\Omega^\bullet_{X^{an}}$ resolves the constant sheaf $\C_{X^{an}}$.  Therefore, the hypercohomology of the derived de Rham complex computes the Betti cohomology of $X^{an}$.
\end{thm}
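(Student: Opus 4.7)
The plan is to combine two essentially independent results: Bhatt's algebraic comparison between the intrinsic complex $\hcdR_X$ and the embedded model $\Omega^H_X$, and Hartshorne's classical comparison between algebraic and analytic de Rham cohomology. I sketch the approach to each below.

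For the first quasi-isomorphism $\hcdR_X \to \Omega^H_X$, I would work Zariski-locally on $X$ and use the closed embedding $i: X \hookrightarrow M$ into a smooth scheme implicit in the definition of $\Omega^H_X$. The essential inputs are: (i) since $M$ is smooth, $\mathbb{L}_M \simeq \Omega^1_M$ is concentrated in degree zero, so $\hcdR_M \simeq \Omega^\bullet_M$ as filtered dg $k$-algebras, with the Hodge filtration matching the stupid filtration; (ii) for the closed embedding $i$, the Hodge-completed relative derived de Rham complex $\hcdR_{X/M}$ is quasi-isomorphic to $\widehat{i^{-1}\OO_M}$, which uses the conormal triangle for $\mathbb{L}_{X/M}$ and divided-power nilpotence available in characteristic zero; (iii) the transitivity triangle for the composition $X \to M \to \Spec k$ combines (i) and (ii) to produce a natural map $\hcdR_X \to \widehat{i^{-1}\OO_M} \otimes_{i^{-1}\OO_M} i^{-1}\Omega^\bullet_M = \Omega^H_X$, which one verifies to be a quasi-isomorphism. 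Independence of the choice of embedding then follows from the intrinsic definition of $\hcdR_X$ via a common refinement argument (embedding two different ambient spaces into a third).

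For the second quasi-isomorphism $j^{-1}\Omega^H_X \to \Omega^{an}_X$, I would invoke Hartshorne's comparison theorem. Locally, pulling back along $j$ sends the algebraic formal completion $\widehat{i^{-1}\OO_M}$ to the sheaf of formal germs of holomorphic functions along $X^{an} \subset M^{an}$, and $i^{-1}\Omega^\bullet_M$ to restrictions of holomorphic forms. The two technical ingredients are the holomorphic Poincar\'{e} lemma along the formal neighborhood of $X^{an}$ in $M^{an}$, which identifies the resulting complex with $\Omega^{an}_X$, and a GAGA-style compatibility of algebraic formal completion with its holomorphic analogue, which is elementary on smooth $M$ and propagates along $X$.

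The composite statement for $j^{-1}\hcdR_X \to \Omega^{an}_X$ is then immediate, and the hypercohomology assertion is formal since quasi-isomorphisms of sheaves of $k$-modules induce isomorphisms on $R\Gamma$. The main obstacle I anticipate lies in steps (i)--(iii) above, namely verifying that the locally-defined Hartshorne model $\Omega^H_X$ is independent of the chosen embedding and compatible with the intrinsically-defined $\hcdR_X$ for \emph{singular} $X$: a naive spectral sequence argument cannot handle the higher cohomology of $\mathbb{L}_X$, and the derived-categorical machinery of the Hodge-completed cotangent complex is essential. Since a rigorous proof is carried out in Corollary 4.27 of \cite{Bh:DDR} and Chapter IV, Theorem 1.1 of \cite{Ha:DR}, I would ultimately invoke those sources rather than reproduce their arguments.
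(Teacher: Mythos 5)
Your proposal is correct and ultimately takes the same route as the paper: the paper itself gives no proof beyond the single sentence that the result combines Corollary 4.27 of \cite{Bh:DDR} with Theorem 1.1 of Chapter IV of \cite{Ha:DR}, and your argument, after sketching the content of both references (correctly, as far as the outline goes), defers to exactly those same two sources. The intermediate sketch (the transitivity triangle reducing $\hcdR_X$ to the Hodge-completed relative complex of a closed embedding into a smooth $M$, and the holomorphic Poincar\'e lemma with GAGA for the analytic comparison) accurately describes what is inside the cited results, so this is expository elaboration rather than a different approach.
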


\begin{cor}
Let $X$ be a finite type stack over $k=\C$ presented as a groupoid $\begin{tikzcd} U_1 \arrow[r, shift left] \arrow[r, shift right] &U_0 \end{tikzcd}$.  Then, Betti cohomology of $X^{an}$ is computed by the term-wise Hodge-completed derived de Rham complex associated to $U_\bullet.$
\end{cor}

\begin{cor}
Let $X$ be a finite type derived stack over $k=\C$.  Then, there is a natural quasi-isomorphism
$$\begin{tikzcd} \OO(\LLf X)^{\Tate} \arrow[r, "\simeq"] & C^\bullet_{dR}(X^{an}; \C) \,\widehat{\otimes}_k \,k((u)).\end{tikzcd}$$
\end{cor}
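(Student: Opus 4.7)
The plan is to combine the identification of functions on the derived loop space with the Hodge-completed derived de Rham complex (a cyclic-enhanced Hochschild-Kostant-Rosenberg) with Bhatt's comparison theorem.

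First, since $X$ is a scheme, \cite{BZN:LC} gives $\LL X = \LLf X$, and the exponential map of Definition \ref{DefnLoopSpace} is an $S^1$-equivariant equivalence $\widehat{\mathbb{T}}_X[-1] \simeq \LLf X$, where the $S^1$-action on the left factors through its affinization $B\G_a$ and the $B\G_a$-action on $\widehat{\mathbb{T}}_X[-1]$ encodes the de Rham differential. Taking global functions, I would identify
$$\OO(\LL X) \;\simeq\; \Gamma\bigl(X, \widehat{\Sym}_X \mathbb{L}_X[1]\bigr)$$
where the completion is with respect to the Hodge filtration (given by the symmetric powers), equipped with a $B\G_a \rtimes \G_m$-action whose underlying mixed differential is the de Rham differential and whose $\G_m$-weight grading coincides (up to the standard shearing) with the cohomological degree shift.

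Next, I would rewrite the $S^1$-equivariant structure on $\OO(\LL X)$ in these terms: the $S^1$-invariants are computed by pulling back along $S^1 \to B\G_a$ and produce the Hodge-completed derived de Rham complex $\hcdR_X$ up to a degree-weight shearing by the universal class $u \in H^2(BS^1;k)$. Passing to the Tate construction inverts $u$ and, because the $\G_m$-grading on $\Sym \mathbb{L}_X[1]$ is unbounded in one direction but the shearing allows us to collapse it through multiplication by $u$, yields a 2-periodic version of $\hcdR_X$ of the form $\hcdR_X \,\widehat{\otimes^!}\, k((u))$. At this stage the statement is purely a statement about $\hcdR_X$ tensored (in the appropriate completed sense) with $k((u))$, with no remaining reference to the loop space.

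Finally, I would apply Theorem \ref{BhattThm} of Bhatt--Hartshorne, which provides a quasi-isomorphism $j^{-1}\hcdR_X \simeq \Omega^{an}_{X^{an}}$, to pass from the Hodge-completed derived de Rham complex to the analytic de Rham complex; taking hypercohomology and invoking the analytic de Rham theorem identifies this with $C^\bullet(X^{an}; k)$. Tensoring with $k((u))$ then yields the desired identification.

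The main obstacle will be a careful justification of the shearing and of the commutation of the Tate construction with the Hodge completion and with hypercohomology. The shearing is bookkeeping (matching the $\G_m$-weight filtration on $\Sym \mathbb{L}_X[1]$ with the $u$-filtration coming from $H^\bullet(BS^1;k) = k[[u]]$); the commutation requires, as in Lemmas \ref{TateProd} and \ref{TateLimitColimit}, verifying suitable boundedness and Mittag-Leffler hypotheses on the truncations of the Hodge filtration so that $(-)^{\Tate}$ commutes with the limit defining $\hcdR_X$. Once these technical points are in place, the Bhatt--Hartshorne comparison upgrades the algebraic statement to the claimed analytic identification.
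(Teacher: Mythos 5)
Your proposal takes essentially the same route as the paper. The paper's proof of this corollary is a one-line citation: it invokes the HKR theorem of Proposition 4.4 in \cite{BZN:LC} (which identifies $\OO(\LL X)^{\Tate}$ with the 2-periodicized Hodge-completed derived de Rham complex via the exponential/shearing you describe) and then Theorem \ref{BhattThm} to pass to analytic de Rham cohomology. You have unpacked the content of the HKR step explicitly and flagged the shearing and commutation issues, which the paper itself only addresses carefully in the later, harder stack case (Proposition \ref{StackCoh}). One small point worth tightening: your intermediate expression $\hcdR_X \,\widehat{\otimes^!}\, k((u))$ uses the completed tensor product, but the corollary's statement uses the plain $\otimes_k k((u))$; in the scheme case the cited HKR theorem already gives you the plain tensor, and the distinction only becomes genuinely necessary for stacks, where the cohomology is no longer bounded above (compare the $B\G_m$ remark following Proposition \ref{StackCoh}).
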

\begin{proof}
By Proposition 6.4 of \cite{BZN:LC}, the Hodge-completed graded algebra of differential forms is a graded sheaf on the smooth site of $X$, i.e. the Hodge-completed derived de Rham complex for stacks is quasi-isomorphic to the term-wise Hodge-completed derived de Rham complex associated to an atlas $U \rightarrow X$ of the stack $X$.  This implies that the derived de Rham complex computes Betti cohomology and in particular, using Proposition \ref{StackCoh} there is an filtration-preserving equivalence in the derived category $C^\bullet_{dR}(X; \C) \simeq C^\bullet(X^{an}; \C)$.
\end{proof}

\subsection{Comparing global functions on unipotent and formal loop spaces}\label{ProGradedSection}

We prove a completion theorem for periodic cyclic homology, assuming the following theorem, which is proven as Theorem \ref{FormalUniProof}.
\begin{thm}\label{FormalUni}
Let $X$ be a quasicompact algebraic space with an action of an affine algebraic group $G$.  The map on functions induced by pullback is an equivalence:
$$\begin{tikzcd}\OO(\LL^u(X/G))^{\Tate} \arrow[r, "\simeq"] & \OO(\LLf(X/G))^{\Tate}\end{tikzcd}.$$
\end{thm}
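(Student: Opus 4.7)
The plan is to exploit the contracting $\G_m$-action inherent to the unipotent loop space, which arises from the $\G_m$-action on $B\G_a$ and realizes $\LL^u(X/G) = \Map(B\G_a, X/G)$ as carrying a $B\G_a \rtimes \G_m$-action. By Lemma \ref{LemContracting} this $\G_m$ contracts $\LL^u(X/G)$ onto its fixed locus of constant loops $X/G$, endowing $R := \OO(\LL^u(X/G))$ with a $\Z_{\leq 0}$-grading $R = \bigoplus_{n \leq 0} R_n$ with $R_0 = \OO(X/G)$. I would first identify $\LLf(X/G)$ with the formal completion of $\LL^u(X/G)$ along $X/G$: both $\LL^u$ and $\LLf$ are formal completions of $\LL(X/G)$, the former along the preimage of $\widehat{U}/G$ and the latter along the deeper stratum $X/G$, and nested formal completions collapse by Lemma \ref{FormalNbhd}. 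In terms of the grading this realizes the target as $\widehat{R} = \lim_N R/R^{<-N} = \prod_{n \leq 0} R_n$, a Mittag-Leffler tower with surjective transition maps, and the comparison map as the natural inclusion $R \hookrightarrow \widehat{R}$.

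The heart of the proof is the compatibility with the $S^1$-action. Since the latter factors through the $B\G_a$-factor of $B\G_a \rtimes \G_m$, the mixed differential $\epsilon$ on $R$ is $\G_m$-equivariant of weight $-1$, and correspondingly the Tate parameter $u$ has $\G_m$-weight $+1$, so that the Tate differential $d + u\epsilon$ preserves $\G_m$-weight. An element of weight $m$ in either Tate complex is a sum of terms $r_n u^k$ with $n + k = m$ and $n \leq 0$, forcing $k \geq m$, i.e., the $u$-exponent is bounded below. The cohomological degree of $r_n u^k$ is $|r_n| + 2k$, so a cohomological upper bound on $R$ bounds $k$ above in each fixed degree, leaving a finite contributing index set in every bidegree. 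Hence the weight-$m$ pieces of $R^{\Tate}$ and $\widehat{R}^{\Tate}$ agree termwise.

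To upgrade the termwise agreement to a genuine quasi-isomorphism, I would invoke Lemma \ref{TateLimitColimit} applied to the Mittag-Leffler tower $\{R/R^{<-N}\}_N$, identifying $\widehat{R}^{\Tate}$ with $\lim_N (R/R^{<-N})^{\Tate}$; the weight-wise truncation argument then shows this limit reproduces $R^{\Tate}$. The main obstacle will be cohomological: establishing uniform bounded-aboveness of the tower $\{R/R^{<-N}\}$, which likely requires an explicit equivariant resolution of $R$ --- for example, a Koszul-style model for $\LL^u(X/G) = \widehat{U}/G \times_{G/G} \LL(X/G)$ built from a smooth atlas of $X$, in the spirit of Example \ref{OddTangentBundle} --- together with verifying that the $\G_m$-weight decomposition interacts properly with the derived limit defining $\widehat{R}$. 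The second assertion, $HP(\Perf(X/U)) \simeq HP(\Perf(X))$ for unipotent $U$, follows from the first: the unipotent cone of a unipotent group $U$ is all of $U$, so $\LL^u(X/U) \simeq \LL(X/U)$, and combining the first part with the observation that the formal completion of $U/U$ at $e$ pulled back to $X$ contracts onto $X$ recovers the desired identification.
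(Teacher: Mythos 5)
There is a genuine gap in the finiteness argument at the heart of your proposal, and it lies precisely where you flag the argument as soft. Fix a cohomological degree $d$ and weight $m$; a contributing term $r_n u^k$ satisfies $n + k = m$ (so $k \geq m$ since $n \leq 0$) and $|r_n| + 2k = d$. A cohomological \emph{upper} bound $|r_n| \leq D$ on $R = \OO(\LL^u(X/G))$ yields $2k = d - |r_n| \geq d - D$, which bounds $k$ from \emph{below}, not above; to bound $k$ from above you would need $R$ cohomologically bounded from \emph{below}. But $X$ is only assumed to be a quasicompact algebraic space, not smooth, so $\OO(\LL^u(X/G))$ is in general unbounded below (the Hochschild homology of a singular affine scheme already is). The contributing index set in a fixed bidegree is therefore infinite, the asserted termwise agreement of weight pieces fails, and Lemma \ref{TateLimitColimit} alone cannot identify $\lim_N (R/R^{<-N})^{\Tate}$ with $R^{\Tate}$.

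The paper's proof avoids a bidegree count entirely. After reducing to reductive $G$ (by embedding $G$ in a reductive $K$ and replacing $X$ with the algebraic space $X \times^G K$), it regards both sides as modules over $\OO(\LL^u(BG))^{\Tate} \simeq \OO(\LLf(BG))^{\Tate} \simeq k[[\h]]^W((u))$ and shows that the cohomology of the target, $H^\bullet(X^{an}/G^{an};k)\,\widehat{\otimes^!}\,k((u))$, is \emph{finitely generated} over $k[[\h]]^W$ \emph{by weight-homogeneous elements}. This is a real theorem input --- the finite generation of $H^\bullet(X/G;k)$ over $H^\bullet(BG;k)$, made weight-homogeneous by the degree-weight shearing --- not a bookkeeping observation. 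The conclusion then follows from Lemma \ref{ProgradedFinite}: a pro-graded isomorphism of $A$-modules whose target is generated by elements of finite weight support is an honest isomorphism. Your setup --- the contracting $\G_m$-action, the description of $\LLf(X/G)$ as the completion of $\LL^u(X/G)$ along constant loops, the pro-graded isomorphism of Lemma \ref{GradedIso}, and the observation that the Tate differential preserves weight since $\epsilon$ has weight $-1$ and $u$ weight $+1$ --- all match the paper's strategy; the missing ingredient is the module-finiteness over the base that replaces the flawed degree bound.
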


\begin{thm}[Atiyah-Segal completion for periodic cyclic homology]\label{HPLoc}
Let $G$ be a reductive group acting on a smooth quasi-projective variety $X$.  The periodic cyclic homology $\HP(\Perf(X/G))$ is naturally a module over 
$HP(\Perf(BG)) = k[G//G]((u))$.  For a closed point $z \in G//G$, we have an identification of the formal completion at $z$ with a 2-periodicization of the singular cohomology of the fixed points
$$\begin{tikzcd}HP(\Perf(X/G))_{\widehat{z}} \arrow[r, "\simeq"] & C_{dR}^\bullet((X^z)^{an}/(G^z)^{an}; k) \,\widehat{\otimes}_k^!\, k((u))\end{tikzcd}$$
as a module over $HP(\Perf(BG))_{\widehat{z}} \simeq C_{dR}^\bullet(B(G^z)^{an}; k) \otimes_k^! k((u))$, contravariantly functorial with respect to $X$.
\end{thm}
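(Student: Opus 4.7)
The plan is to reduce the theorem to a chain of identifications already assembled in the paper. First I would apply the equivariant localization theorem for cyclic homology (Theorem \ref{ThmHC}) to identify
$$HP(\Perf(X/G))_{\widehat{z}} \;\simeq\; HP(\Perf(\pi_0(X^z)/G^z))_{\widehat{z}} \;\simeq\; \OO(\LL^u_z(\pi_0(X^z)/G^z))^{\Tate},$$
so that the problem reduces to computing Tate-equivariant global functions on the $z$-unipotent loops of $\pi_0(X^z)/G^z$, where $z$ is now central in $G^z$ and acts trivially on $\pi_0(X^z)$. Note that $\pi_0(X^z)$ inherits smoothness and quasi-projectivity from $X$, so the hypotheses required for the remaining steps remain in force.

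Next I would trade the $z$-unipotent loops for the identity-based unipotent loops. Centrality of $z$ in $G^z$ together with its trivial action on $\pi_0(X^z)$ give a shift-by-$z$ automorphism $\mu_z$ of $\LL(\pi_0(X^z)/G^z)$ that carries $\LL^u(\pi_0(X^z)/G^z)$ isomorphically onto $\LL^u_z(\pi_0(X^z)/G^z)$. Per Remark \ref{RmkTwisting}, $\mu_z$ intertwines the ordinary loop rotation $\rho$ with the $z$-twisted rotation $\tau_z$; Proposition \ref{CircleActionsSame}, which uses quasi-projectivity to model functions on $\LL(\pi_0(X^z)/G^z)$ via the equivariant cyclic bar complex, then shows that the two resulting Tate constructions agree. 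Combining,
$$\OO(\LL^u_z(\pi_0(X^z)/G^z))^{\Tate,\rho} \;\simeq\; \OO(\LL^u(\pi_0(X^z)/G^z))^{\Tate,\tau_z} \;\simeq\; \OO(\LL^u(\pi_0(X^z)/G^z))^{\Tate,\rho}.$$

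With the computation reduced to unipotent loops based at the identity under the honest rotation action, Theorem \ref{FormalUni} allows me to pass to the formal loop space after the Tate construction, and Proposition \ref{StackCoh} (combined with Theorem \ref{BhattThm}) gives
$$\OO(\LLf(\pi_0(X^z)/G^z))^{\Tate} \;\simeq\; C^\bullet_{\dR}((\pi_0(X^z)/G^z)^{an}; k) \,\widehat{\otimes_k^!}\, k((u)) \;\simeq\; H^\bullet((X^z)^{an}/(G^z)^{an}; k) \,\widehat{\otimes_k^!}\, k((u)),$$
the last identification using that analytification only sees the underlying classical topological quotient. The analogous identification $HP(\Perf(BG))_{\widehat{z}} \simeq H^\bullet(B(G^z)^{an}; k) \,\widehat{\otimes_k^!}\, k((u))$ and the compatibility of the module structures are obtained by running the same chain for the case $X = \pt$ and invoking the functoriality of each step in the structure map $X/G \to BG$; contravariant functoriality in $X$ likewise propagates through each step.

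I expect the main obstacle to be the twisted-versus-untwisted comparison in the second paragraph: a direct geometric comparison of the $\rho$- and $\tau_z$-equivariant structures on the Tate of unipotent loops is not transparent, and the paper circumvents this by descending to the algebraic model afforded by the equivariant cyclic bar complex, which is precisely where the quasi-projectivity assumption on $X$ is consumed to ensure the existence of an equivariant compact generator of $\QCoh(X)$.
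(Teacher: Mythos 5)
Your proposal matches the paper's own proof essentially step for step: reduce via equivariant localization (you cite Theorem \ref{ThmHC}, the paper cites Theorems \ref{ThmHH} and \ref{MainThm}, but Theorem \ref{ThmHC} is derived from those and the Tate-completion commutation inside it is exactly what is needed for $HP$), then trade the $z$-twisted rotation for the ordinary rotation using $\mu_z$ and Proposition \ref{CircleActionsSame}, then pass from unipotent to formal loops via Theorem \ref{FormalUni}, and finally apply Proposition \ref{StackCoh}. You also correctly identify quasi-projectivity as the hypothesis consumed by Proposition \ref{CircleActionsSame} via the equivariant cyclic bar complex, and you make the $\rho$-versus-$\tau_z$ bookkeeping, which the paper compresses into the notation $(-)^{\Tate,\tau}$, slightly more explicit.
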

\begin{proof}
By Theorem \ref{ThmHH} and Theorem \ref{MainThm},
$$HP(\Perf(X/G))_{\widehat{z}} \simeq \OO(\LL^u_z(X/G))^{\Tate} \simeq \OO(\LL^u(\pi_0(X^z)/G^z))^{\Tate, \rho_z}.$$
By Proposition \ref{CircleActionsSame} and Theorem \ref{FormalUni},
$$\OO(\LL^u(\pi_0(X^z)/G^z))^{\Tate, \rho_z} \simeq \OO(\LL^u(\pi_0(X^z)/G^z))^{\Tate} \simeq \OO(\LLf(\pi_0(X^z)/G^z))^{\Tate}.$$
Finally, by Theorem \ref{StackCoh}, we have
$$\OO(\LLf(\pi_0(X^z)/G^z))^{\Tate} \simeq C_{dR}^\bullet(\pi_0(X^z)^{an}/(G^z)^{an}; k)((u)).$$
\end{proof}

It remains to prove Theorem \ref{FormalUni}.  Central to our proof will be to use the fact that the map is a pro-graded isomorphism; the following lemma establishes a general situation when this is true.  Let us first clarify what we mean by a pro-graded isomorphism, and why this notion is necessary.

\begin{defn}
A \emph{pro-graded chain complex} $V$ is an object of $\Pro(\QCoh(B\G_m))$; that is, it is a filtered limit of graded chain complexes\footnote{By Proposition 1.1.3.6 of \cite{Lu:HA}, this category is stable.}.  Letting $\mathcal{L}_n$ denote the weight $n$ twisted one-dimensional $\G_m$-representation, the \emph{$n$th homogeneous part} functor is given by
$$(-)^{\text{wt}=n} := \mathrm{ev} \circ \Pro(\Gamma)(B\G_m, - \otimes \LL_{-n}): \Pro(\QCoh(B\G_m)) \rightarrow \cat{Vect}_k.$$
where the functor 
$$\Pro(\Gamma)(B\G_m, -):  \Pro(\QCoh(B\G_m)) \rightarrow \Pro(\cat{Vect}_k)$$ 
is the functor induced on pro-completions from $\Gamma(B\G_m, -): \QCoh(B\G_m) \rightarrow \QCoh(\pt)$ and the functor 
$$\mathrm{ev}: \Pro(\cat{Vect}_k) \rightarrow \cat{Vect}_k$$
 takes a limit diagram and evaluates it in $\cat{Vect}_k$ (which has all limits); it is right adjoint to the inclusion.   The \emph{underlying chain complex} is given by
$$\mathrm{ev} \circ \Pro(p^*): \Pro(\QCoh(B\G_m)) \rightarrow \cat{Vect}_k$$
where $p: \pt \rightarrow B\G_m$ is the usual atlas so that $p^*$ is the forgetful functor.  A map of graded chain complexes is a \emph{pro-graded isomorphism} if it is an isomorphism on $n$th graded parts for all $n$.
\end{defn}

\begin{rmk}
We require this formalism for the following reason. Let $p: \pt \rightarrow B\G_m$ be the standard atlas; the pullback (forgetful functor) $p^*$ does not commute with limits since the category $\QCoh(B\G_m)$ cannot differentiate between direct sums and direct products across different weights.  In particular, objects of $\QCoh(B\G_m)$ are $\mathbb{Z}$-graded chain complexes, which are equal to the direct sum of their homogeneous pieces\footnote{To see this, note that objects of $\QCoh(B\G_m)$ are chain complexes which are $\OO(\G_m)$-coalgebras, i.e. equipped with a map $V \rightarrow V \otimes_k k[z,z^{-1}]$.  In particular, tensors have finite rank, so any vector can only have finitely many homogeneous parts.}.
\end{rmk}

\begin{exmp}
For example, completing $\{0\} \subset \mathbb{A}^1/\G_m$ (under the usual scaling action),
$$\lim_{n, \QCoh(\pt)} k[x]/x^n = k[[x]] \;\;\;\;\;\;\; \lim_{n,\QCoh(B\G_m)} k[x]/x^n = k[x].$$  
\end{exmp}

\begin{rmk}
One way to remedy this is to keep track of the limit diagrams by working in the category $\Pro(\QCoh(B\G_m))$ and apply the evaluation functor in $\cat{Vect}_k = \QCoh(\pt)$ rather than $\QCoh(B\G_m)$.    
However, the category $\Pro(\QCoh(B\G_m))$ contains more information than we need: we do not wish to track the topologies on vector spaces as we only care about their completions.  Instead, we consider a smaller category: the category of $k^\Z$-modules
\end{rmk}

\begin{defn}
We define a functor (morally, some kind of Cartier duality) $D: \QCoh(B\G_m) \rightarrow \QCoh(\Z)$ as follows: it is a standard calculation that $\QCoh(B\G_m) \simeq \bigoplus_{n \in \Z} \QCoh(\pt)$.  For $V \in \QCoh(B\G_m)$, we denote by $V_n$ the summand corresponding to $n \in \Z$.  Define $D(\bigoplus V_n) = \bigoplus i_{n,*} V_n$ where $i_n: \Spec(k) \rightarrow \Z$ is the inclusion of the point $\{n\}$.

Letting $r: \mathbb{Z} \rightarrow \Spec(k^\Z)$ denote the affinization map, 
we define a functor $\Psi: \Pro(\QCoh(B\G_m)) \rightarrow \QCoh(\Spec k^\Z)) = k^\Z\dmod$ via the composition
$$\begin{tikzcd}
\Pro(\QCoh(B\G_m)) \arrow[r, "\Pro(D)"] \arrow[rrr, bend right=10, "\Psi"] & \Pro(\QCoh(\Z)) \arrow[r, "\Pro(r_*)"] & \Pro(\QCoh(\Spec(k^\Z))) \arrow[r, "\text{ev}"] & \QCoh(\Spec(k^\Z)). 
\end{tikzcd}$$
\end{defn}

\begin{defn}
For any $n \in \Z$, there is a natural map $\iota_n: \Spec k \rightarrow \Spec k^\Z$ defined by projection to the $n$th coordinate.  Denote by $k_n = \iota_{n,*} k \in \QCoh(\Spec k^\Z)$, and note that $k_n$ is projective since it is a summand of the free module decomposed by $k^\Z = k_n \oplus (\prod_{m \neq n} k_m)$.  We define the \emph{$n$-stalk} of a $k^\Z$-complex $M$ to be $\iota_n^* M = k_n \otimes M$ and the \emph{$n$-costalk} to be $\iota_n^! M =\Hom(k_n, M)$; since $k_n$ is projective the underived functor is the derived functor.
\end{defn}

\begin{rmk}
The notion of costalk and stalk are canonically equivalent.  It is a direct verification that in fact, $i^!(V) = i^*(V) = V_n$.  Let $p_n \in k^\Z$ denote the element with $0$ in the $n$th component and $1$ everywhere else.  Let $e_n$ denote the element with $1$ in the $n$th component and $0$ everywhere else.  Then, $p_n + e_n = 1$, so we find that $V = p_nV \oplus e_nV$, and $i^!(V) = \ker(p_n)$ while $i^*(V) = \mathrm{coker}(p_n)$.
\end{rmk}

Composing $\Psi$ with the global sections functors recovers the underlying vector space, and composing with the costalk at $n \in \Spec k^\Z$ recovers the $n$th homogeneous part.  In particular, if we are interested in studying the underlying vector space of $V \in \Pro(\QCoh(B\G_m))$ via its homogeneous components, it suffices to consider it as an object of $\QCoh(\Spec(k^\Z))$.
\begin{prop}
We have commutative diagrams of functors
$$\begin{tikzcd}
\Pro(\QCoh(B\G_m)) \arrow[dr, "{\mathrm{ev} \circ \Pro(p^*)}"'] \arrow[rr, "\Phi"] & & k^\Z\dmod \arrow[dl, "{\Gamma(\Spec k^\Z, -)}"] & \Pro(\QCoh(B\G_m)) \arrow[dr, "{{\mathrm{ev} \circ \Pro(\Gamma)(B\G_m, - \otimes \mathcal{L}_{-n})}}"'] \arrow[rr, "\Phi"] & & k^\Z\dmod \arrow[dl, "\iota_n^!"] \\
& \cat{Vect}_k &  & & \cat{Vect}_k & 
\end{tikzcd}$$
\end{prop}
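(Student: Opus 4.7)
The plan is to verify each triangle first on compact objects, then extend to $\QCoh(B\G_m)$ via Ind-completion, and finally to $\Pro(\QCoh(B\G_m))$ by using that the target functors commute with limits.

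On compact objects $\Perf(B\G_m) \simeq \Coh_{prop}(\Z)$, a finite-dimensional graded vector space $V = \bigoplus_n V_n$ with only finitely many nonzero components is sent by Cartier duality $D$ to the finitely-supported sheaf on $\Z$ with fiber $V_n$ at $n$, and $r_* D V$ is the corresponding $k^{\Z}$-module whose $n$th idempotent component is $V_n$. Since the support is finite, $p^* V = \bigoplus_n V_n = \prod_n V_n = \Gamma(\Spec k^{\Z}, r_* D V)$, verifying the first triangle, and the weight-$n$ isotypic component $(V \otimes \mathcal{L}_{-n})^{\G_m} = V_n$ matches the costalk $\iota_n^!(r_* D V)$, verifying the second.

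To extend to $\QCoh(B\G_m) = \Ind(\Perf(B\G_m))$, I would observe that all of $p^*$, $D$, $r_*$, $\Gamma(\Spec k^{\Z}, -)$, and $\iota_n^!$ preserve filtered colimits: the forgetful functor $p^*$ is a left adjoint, while the remaining functors can be checked directly on the discrete scheme $\Z$ and on its affinization $\Spec k^{\Z}$. Commutativity of each triangle then passes through Ind-completion to all of $\QCoh(B\G_m)$. For the final passage to $\Pro(\QCoh(B\G_m))$, given a pro-object $V = \lim_\alpha V_\alpha$, commutativity of each triangle on each individual level $V_\alpha$, combined with the universal property of the pro-completion, reduces the claim to checking that the target functors $\Gamma(\Spec k^{\Z}, -)$ and $\iota_n^!$ commute with arbitrary limits, which they do by virtue of being right adjoints on an affine scheme.

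The main subtlety will be reconciling the direct-sum presentation $p^* V_\alpha = \bigoplus_n (V_\alpha)_n$ with the product-like formula $\Gamma(\Spec k^{\Z}, r_* D V_\alpha) = \prod_n (V_\alpha)_n$ when $V_\alpha$ is non-compact; this is precisely the role of the $\Pro$-formalism, which ensures that the intended pro-objects (e.g.\ the completion $\lim_n k[x]/x^n$) are presented by diagrams of compact levels where direct sum and product coincide term-by-term, and the evaluation functor $\mathrm{ev}$ then records the correct limit in each target category. Establishing this reconciliation cleanly, and checking it agrees with the examples motivating the formalism, will be the heart of the proof.
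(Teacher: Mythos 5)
Your overall strategy—verify at the $\QCoh(B\G_m)$-level and then pass to $\Pro$ by arguing that $\Gamma(\Spec k^\Z,-)$, $\iota_n^!$, and $\mathrm{ev}$ are right adjoints and hence commute with limits—matches the structure of the paper's argument, which factors each triangle through $\Pro(k^\Z\dmod)$ and proves exactly those two things (commutativity without $\Pro$; right adjoints commuting with $\mathrm{ev}$). Checking on $\Perf(B\G_m)$ and Ind-extending, rather than checking directly on abelian hearts as the paper does, is a legitimate variant.

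However, your handling of the $r_*$ step has a genuine gap, and the paragraph flagging the ``main subtlety'' makes it worse rather than fixing it. The sheaf-theoretic pushforward $r_*:\QCoh(\Z)\to k^\Z\dmod$ sends $(V_n)_n$ to $\prod_n V_n$ (global sections over the non-quasi-compact $\Z$) and therefore does \emph{not} preserve filtered colimits, contrary to what you assert when listing which functors are continuous. The functor the paper actually uses, and the one that makes the left triangle commute at the $\QCoh$-level, is the Ind-completion of $r_*|_{\Coh_{prop}(\Z)}$, i.e. the colimit-preserving extension, which sends $D(V)$ to $\bigoplus_n V_n$ (this is what the paper computes). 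Once you adopt that $r_*$, your ``product-like formula $\Gamma(\Spec k^\Z, r_*D V_\alpha)=\prod_n (V_\alpha)_n$'' simply does not arise: both legs give $\bigoplus_n (V_\alpha)_n = p^*V_\alpha$ for any $V_\alpha\in\QCoh(B\G_m)$, compact or not, and there is nothing left to reconcile.

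Your proposed reconciliation—that the $\Pro$-formalism presents the ``intended'' pro-objects by diagrams of compact levels where sum and product coincide—would not repair the gap in any case. The proposition is stated for all of $\Pro(\QCoh(B\G_m))$, and a general pro-object there need not be isomorphic to one with compact levels: for instance, the constant pro-diagram at a non-compact object $V\in\QCoh(B\G_m)$ has $\Hom_{\Pro}(``\lim\text{''}\,V, W)=\Hom(V,W)=\lim_n\Hom(V^{(n)},W)$ for any exhaustion $V=\colim_n V^{(n)}$ by compacts, whereas the pro-object $``\lim_n\text{''}\,V^{(n)}$ has $\Hom = \colim_n\Hom(V^{(n)},W)$; these differ, so the constant pro-object is not in the essential image of $\Pro(\Perf(B\G_m))$. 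The right fix is not to restrict the class of pro-objects but to fix the functor: work with the continuous $r_*$ throughout, verify the triangles on $\QCoh(B\G_m)$ (either directly, as the paper does on abelian hearts, or by your compact-plus-Ind route using the correct $r_*$), and then the passage to $\Pro$ is immediate from the right-adjointness you already invoke.
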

\begin{proof}
We factor the first diagram as follows
$$\begin{tikzcd}
\Pro(\QCoh(B\G_m)) \arrow[dr, "\Pro(p^*)"'] \arrow[r, "\Pro(r_* \circ D)"] & \Pro(k^{\Z}\dmod)  \arrow[d, "{\Pro(\Gamma)}"] \arrow[r, "{\mathrm{ev}}"] & k^{\Z}\dmod  \arrow[d, "{\Gamma}"]\\
&  \Pro(k\dmod)  \arrow[r, "{\mathrm{ev}}"] & k\dmod
\end{tikzcd}$$
and the second in the analogous way.  So, the proposition follows from two claims: (1) that the diagrams above commute without the $\Pro$, i.e. $p^* = \Gamma(\Spec k^\Z, -) \circ r_* \circ D$ and $\Gamma(B\G_m, -) = \iota_0^! \circ r_* \circ D$, and (2) that the evaluation functor $\Pro(k^\Z\dmod) \rightarrow k^\Z\dmod$ commutes with global sections and taking costalks.

The first claim can be directly verified: it suffices to consider abelian categories since all functors are exact.  In particular, if $V \in \QCoh(B\G_m)^{\heartsuit}$, then it is a $k[z,z^{-1}]$-comodule, i.e. there is a map
$$V \rightarrow V \otimes k[z,z^{-1}] \simeq \bigoplus_n V_n z^n.$$
The functor $D$ takes $V$ to the complex on $\Z$ whose value on open affine $\{n\} \in \Z$ is $V_n$.  The functor $r_*$ takes $D(V)$ to $\bigoplus_n V_n$ where $k^\Z$ acts in the natural way.  Finally, we see that the global sections are exactly $p^* V = \bigoplus V_n$ and the costalk $i_n^!(\bigoplus_n V_n) = \Hom_{k^\Z}(k_n, \bigoplus_n V_n) = V_n$.

Since the evaluation functor is a right adjoint, we prove the second claim by showing that both the global sections functor and the costalks functor are right adjoints, and then using the general fact that right adjoints commute.  The global sections functor is right adjoint to the restrictions of scalars functor (it is also left adjoint to the ``corestriction of scalars'' functor $R\Hom_k(k^\Z, -)$).  The costalks functor is right adjoint to the pushforward (and in this case is equal to the stalks functor, which is a left adjoint).  
\end{proof}

\begin{rmk}
	In fact, since the global sections and (co)stalk functors are both left and right adjoints in the above situation, the above construction and proposition are valid for any iteration of taking $\Pro$ and $\Ind$ categories.
\end{rmk}

\begin{defn}
Let $V$ be a $k^{\Z}$-module.  The \emph{support} of $v \in V$ is the closed subscheme defined by the annihilator ideal of $v$.
\end{defn}

%

\begin{lemma}
A pro-graded isomorphism is injective.
\end{lemma}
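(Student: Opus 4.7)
The plan is to work directly in $\Pro(\QCoh(B\G_m))$, exploiting the fact that each element of a graded chain complex has only finitely many nonzero homogeneous components. I would first present $V = \lim_\alpha V_\alpha$ and $W = \lim_\beta W_\beta$ as formal cofiltered systems of graded chain complexes, with $V_\alpha = \bigoplus_n V_{\alpha,n}$ denoting the decomposition into homogeneous pieces. Since all morphisms in $\QCoh(B\G_m)$ preserve grading, the structure maps in these pro-systems do as well, and under the evaluation functor the underlying complex of $V$ becomes $\lim_\alpha \bigoplus_n V_{\alpha,n}$ in $\cat{Vect}_k$, while the $n$-th homogeneous part becomes $\lim_\alpha V_{\alpha,n}$. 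The pro-graded iso hypothesis asserts that the induced map $f_n \colon \lim_\alpha V_{\alpha,n} \to \lim_\beta W_{\beta,n}$ is an isomorphism for each $n$.

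The main step is to unpack an element $v$ in the underlying complex of $V$ with $f(v) = 0$. I would represent $v$ by a compatible family $(v_\alpha)$ with each $v_\alpha = \sum_{n \in S_\alpha} v_{\alpha,n}$ supported on a \emph{finite} subset $S_\alpha \subset \Z$. Grading-preservation of the structure maps guarantees that $v_{\cdot,n} := (v_{\alpha,n})_\alpha$ is a well-defined element of $\lim_\alpha V_{\alpha,n}$, which plays the role of the ``$n$-th homogeneous component'' of $v$. Since $f$ is represented by grading-preserving maps, it commutes with projection to each degree, giving $f_n(v_{\cdot,n}) = f(v)_{\cdot,n} = 0$. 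Applying the hypothesis that each $f_n$ is an isomorphism, I would conclude $v_{\cdot,n} = 0$, i.e.\ $v_{\alpha,n} = 0$ for every pair $(\alpha,n)$.

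To finish, for each individual $\alpha$ the element $v_\alpha = \sum_{n \in S_\alpha} v_{\alpha,n}$ is a finite sum of zeros in the direct sum $\bigoplus_n V_{\alpha,n}$, hence $v_\alpha = 0$, and therefore $v = 0$. The only delicate point, and what I would flag as requiring care, is the interplay between the direct-sum decomposition at each level (where elements have finite support, so coordinatewise vanishing implies vanishing) and the evaluation of the limit in $\cat{Vect}_k$ (where direct sums and direct products differ dramatically). The argument closes precisely because one can project to each degree \emph{before} taking the limit, reducing matters to the trivial observation that in $\bigoplus_n V_{\alpha,n}$ the vanishing of all (finitely many relevant) coordinates implies the vanishing of the element.
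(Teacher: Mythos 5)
Your proof is correct, and it works directly in $\Pro(\QCoh(B\G_m))$ rather than passing through the $k^{\Z}$-module formalism as the paper does. The paper's proof transports $V$ and $W$ to sheaves on $\Spec(k^{\Z})$ and invokes the ``easy fact'' that a map of sheaves on $\Spec(k^{\Z})$ vanishing on stalks at all $n \in \Z$ must vanish. Taken literally this is not true for arbitrary $k^{\Z}$-modules: the points $n \in \Z$ are not all of $\Spec(k^{\Z})$, and for instance $k^{\Z}/\bigoplus_{n} k$ is a nonzero module whose stalks at every $n \in \Z$ vanish. What saves the paper's argument is that the kernel of a pro-graded map lands in the image of the functor $\Psi$, i.e.\ is an evaluated limit $\lim_\alpha \bigoplus_n K_{\alpha,n}$, and for such modules vanishing of the $\Z$-stalks does force vanishing — precisely because each $K_\alpha$ is a \emph{direct sum} of its homogeneous pieces, so coordinatewise vanishing implies vanishing before one takes the limit. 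That is exactly the finite-support observation your argument isolates and leans on. So your route and the paper's rest on the same essential point, but yours is more careful and self-contained in making it explicit, while the paper's is terser and buries the key finiteness in the structure of the image of $\Psi$. One small caveat: your element-chasing is at the level of underlying modules (or term-wise for strict complexes); this matches how the lemma is used downstream, where the chain-complex statement is reduced to the module statement on cohomology, so that is the appropriate level of rigor here.
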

\begin{proof}
This is the easy fact that if a map of sheaves on $\Spec(k^\Z)$ is zero on stalks at closed points, then it is zero, and the observation that on $\Spec k^\Z$, costalks and stalks coincide.
\end{proof}

\begin{lemma}\label{ProgradedFinite}
Suppose that $f: V \rightarrow W$ is a pro-graded isomorphism of pro-graded vector spaces such that 
either $V$ or $W$ are supported at finitely many weights.  Then $f$ is an isomorphism on underlying vector spaces.  More generally, let $A$ be a sheaf of algebras on $\Spec(k^\Z)$, and $f: V \rightarrow W$ a pro-graded isomorphism of sheaves of $A$-modules where $W$ is generated by elements supported at finitely many weights.  Then, $f$ is an isomorphism on underlying vector spaces.
\end{lemma}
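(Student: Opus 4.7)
The plan is to establish injectivity from the preceding lemma and to reduce surjectivity to lifting finite-weight-supported generators of $W$ through $f$. I will work in $k^\Z\dmod$ via the functor $\Psi$, where the costalk $\iota_n^!$ at a closed point $n \in \Z \subset \Spec(k^\Z)$ extracts the weight $n$ piece, and the pro-graded isomorphism hypothesis translates into the condition that $\iota_n^! f$ is an isomorphism for every $n \in \Z$.

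For the general $A$-module statement, the key observation is that any element $w \in W$ supported at a finite set $\{n_1, \ldots, n_r\} \subset \Z$ decomposes canonically as $w = \sum_{i=1}^r e_{n_i} w$, where $e_{n_i} \in k^\Z$ is the idempotent supported at $n_i$; since each $e_{n_i} w$ is annihilated by the ideal of functions vanishing at $n_i$, it lies in the costalk $\iota_{n_i}^! W$, and so $w \in \bigoplus_i \iota_{n_i}^! W$. Given $A$-module generators $\{w_\alpha\}$ of $W$ with each $w_\alpha$ finitely supported at some $S_\alpha \subset \Z$, the pro-graded isomorphism identifies $\iota_n^! W \simeq \iota_n^! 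V$, so each $w_\alpha$ lifts to an element $v_\alpha \in \bigoplus_{n \in S_\alpha} \iota_n^! V \subset V$. Since $f$ is $A$-linear and the $\{w_\alpha\}$ generate $W$ as an $A$-module, the image of $f$ then contains $W$; combined with injectivity from the preceding lemma this yields the desired isomorphism on underlying vector spaces.

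The first statement reduces to the general one by exhibiting appropriate generators over $A = k^\Z$. When $W$ is supported at finitely many weights $n_1, \ldots, n_r$, then $W = \bigoplus W_{n_i}$ is itself a finite direct sum of weight pieces, which are trivially finitely supported, and the argument above applies verbatim. When instead $V$ is supported at finitely many weights, the pro-graded isomorphism forces $\iota_n^! W = 0$ for every $n$ outside this finite set; using that $W$ lies in the essential image of $\Psi$, one then argues that $W$ must itself decompose as a finite direct sum of weight pieces, reducing to the previous case.

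The step I expect to require the most care is the very last one: ruling out ``phantom'' contributions to $W$ supported at non-closed points of $\Spec(k^\Z)$, such as those corresponding to non-principal ultrafilters on $\Z$. This obstacle is absent for $W$ itself finitely supported but non-trivial otherwise, and will require unwinding the pro-limit presentation of $W$ and using that the pro-graded structure prevents such pathologies once the costalks vanish at all but finitely many closed points.
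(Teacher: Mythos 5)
Your argument for the $A$-module statement --- injectivity from the preceding lemma, decomposition of each finitely supported generator $w = \sum_i e_{n_i} w$ into homogeneous pieces living in costalks, lifting each piece through the pro-graded isomorphism on costalks, and deducing surjectivity from $A$-linearity --- is precisely the paper's argument, and your reduction of the first statement to it when $W$ is finitely supported is also how the paper proceeds.

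You are right that the remaining case, where only $V$ is assumed finitely supported, is the delicate one, and the paper's own proof is in fact no more explicit than yours: it simply asserts that ``the assumptions of the proposition imply that $V$ and $W$ have finite support, whose points consist entirely of closed points'' without justification. The phantom contributions you worry about are ruled out by the pro-graded structure: if $W$ lies in the image of $\Psi$, it is presented by a filtered diagram of genuinely graded vector spaces $W_\alpha = \bigoplus_n W_\alpha^n$ with graded transition maps, so any coherent element $(w_\alpha)_\alpha \in \lim_\alpha W_\alpha$ is determined by its weight components $(w_\alpha^n)_\alpha$; hence the underlying vector space embeds into $\prod_n \lim_\alpha W_\alpha^n = \prod_n \iota_n^! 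W$. Once $\iota_n^! W \cong \iota_n^! V$ vanishes outside a finite set $S$, this embedding forces $W = \bigoplus_{n \in S} \iota_n^! W$, which is exactly the case you already handled.
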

\begin{proof}
For the first claim, the assumptions of the proposition imply that $V$ and $W$ have finite support, whose points consist entirely of closed points of $k^\Z$.  A map being a pro-graded isomorphism means that it is an isomorphism at stalks of closed points.

For the second more general claim, note that if $W$ is an $A$-module, and $w \in W$ is an element of finitely many weights, say $w = w_1 + \cdots + w_r$ where the $w_i$ are homogeneous of weight $c_i$, then $w_i \in A \cdot w$ since $e_{c_i} \cdot w = w_i$, where $e_{c_i} \in k^\Z$ is the characteristic function at $i \in \Z$.  In particular, $W$ having a set of generators supported at finitely many weights is equivalent to $W$ having a set of homogeneous generators.  Now, if $f: V \rightarrow W$ is a pro-graded isomorphism, then it is injective by the previous lemma.  For surjectivity, note that for a given homogeneous $w \in W$, since $f$ is a pro-graded isomorphism, we have a homogeneous $v \in V$ such that $f(v) = w$, and surjectivity follows since $W$ is generated by homogeneous elements.
\end{proof}

The following lemma allows us to reduce statements in the derived category to statements in the abelian category.
\begin{lemma}
A map $f: V \rightarrow W$ is a pro-graded quasi-isomorphism of pro-graded complexes if and only if each of the $H^i(f): H^i(V) \rightarrow H^i(W)$ are pro-graded isomorphisms of modules.
\end{lemma}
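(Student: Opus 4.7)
The plan is to reduce the statement to an assertion about $k^\Z$-modules via the functor $\Phi$ constructed just above: under this translation, the pro-graded complex $V$ becomes a complex of $k^\Z$-modules, and the $n$-th homogeneous part functor $(-)^{\text{wt}=n}$ corresponds to the costalk $\iota_n^!$ at the closed point $n \in \Spec k^\Z$. By definition, a map $f \colon V \to W$ is a pro-graded quasi-isomorphism precisely when $\iota_n^! f$ is a quasi-isomorphism in $\cat{Vect}_k$ for every $n \in \Z$, and each $H^i(f)$ is a pro-graded isomorphism precisely when $\iota_n^! H^i(f)$ is an isomorphism for every $n$ and $i$.

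The key input is that the costalk functor $\iota_n^!$ is exact (not merely left exact), so that it commutes with taking cohomology: $H^i(\iota_n^! f) = \iota_n^! H^i(f)$. I would justify this by noting that $k_n \simeq k$ is the quotient of $k^\Z$ by the maximal ideal at $n$, but is also realized as the direct summand $k^\Z \cdot e_n$ where $e_n \in k^\Z$ is the idempotent indicator function of $\{n\} \subset \Z$. Hence $k_n$ is a projective $k^\Z$-module (indeed, a free module on a summand), and so $\iota_n^!(-) = \Hom_{k^\Z}(k_n, -)$ is exact. This is also the underlying reason that stalks and costalks coincide at these closed points, as remarked in the preceding proof.

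Granted these two observations, the equivalence is immediate: $f$ is a pro-graded quasi-isomorphism iff $\iota_n^! f$ is a quasi-isomorphism for every $n$, iff $H^i(\iota_n^! f)$ is an isomorphism for every $n$ and $i$, iff $\iota_n^!(H^i f)$ is an isomorphism for every $n$ and $i$ (using exactness to commute $H^i$ past $\iota_n^!$), iff each $H^i(f)$ is a pro-graded isomorphism. There is no serious obstacle; the only thing that requires care is ensuring that the pro-structure on the source category $\Pro(\QCoh(B\G_m))$ does not get in the way, but this has already been addressed by the preceding proposition, which shows that $\Phi$ intertwines the underlying-chain-complex and $n$-th-homogeneous-part functors with $\Gamma(\Spec k^\Z, -)$ and $\iota_n^!$, both of which are exact and behave well with respect to $\mathrm{ev}$.
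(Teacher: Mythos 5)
Your proof is correct and follows essentially the same route as the paper's: reduce via $\Phi$ to complexes of $k^\Z$-modules, observe that the costalk $\iota_n^!(-) = \Hom_{k^\Z}(k_n,-)$ is exact (the paper phrases this as $k_n$ being the localization, hence stalks and costalks coincide; you phrase it as $k_n = e_n k^\Z$ being a projective summand — the same observation), and commute $H^i$ past the exact homogeneous-parts functor.
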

\begin{proof}
We can take a map $f: V \rightarrow W$ of complexes of $k^\Z$-modules.  It is easy to verify that taking $n$th homogeneous parts (i.e. talking stalks via localization) is exact, so that if $H^n(f)$ is a pro-graded isomorphism, it is an isomorphism of modules and therefore $f$ is a quasi-isomorphism.  Conversely, the global sections functor is clearly exact. 
\end{proof}

\begin{defn}
A \emph{pro-graded} dg-algebra is an object of $\Pro(\Alg(\QCoh(B\G_m)))$.  If $A$ is a pro-graded dg-algebra, then $\Spec(A)$ is naturally a \emph{dg-indscheme with a $\G_m$-action} in the sense of \cite{GR:DGI}.  We will use the word \emph{ind-stack} to mean a prestack which can be written as an inductive limit of closed embeddings of (derived) QCA stacks (in the sense of \cite{DG:QCA}); in practice we only need the case of a formal completion of a closed substack of a quotient stack.
\end{defn}

Recall the definition of a contracting $\G_m$-action in Definition \ref{DefnContracting}.
\begin{lemma}\label{CompleteWeights}
Let $A$ be a noetherian weight $\mathbb{Z}^{\leq 0}$ pro-graded connective dg-algebra, which is generated in negative weights over its weight $0$ part, and let $I = \pi_0(A^{\mathrm{wt}<0}) \subset \pi_0(A)$ be the classical augmentation ideal.  The derived completion $A \rightarrow \widehat{A_I}$ is a pro-graded quasi-isomorphism.  Globally, if $X$ is an ind-stack with a representable contracting $\G_m$-action with fixed point locus $Z \subset X$, then $\OO_X \rightarrow \OO_{\widehat{X_Z}}$ is a pro-graded quasi-isomorphism of quasicoherent sheaves on $X$.  In particular, $\OO(X) \rightarrow \OO(\widehat{X_Z})$ is a pro-graded isomorphism.
\end{lemma}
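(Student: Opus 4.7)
The plan is to reduce the global statement to the affine one, and then to prove the affine statement by a direct Koszul-type computation weight by weight.

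For the reduction: given the ind-stack $X$ with representable contracting $\G_m$-action and fixed locus $Z$, cover $Z$ by affine opens $U \hookrightarrow Z$. Since the map $X \to Z$ is representable and $\G_m$-equivariant with the target having trivial action, the base change $X \times_Z U$ is an affine (ind-)scheme $\Spec(A)$, and the contracting hypothesis forces $A$ to be a non-positively weighted connective noetherian pro-graded dg-algebra with $A^{\mathrm{wt}=0} = \OO(U)$, generated over $\OO(U)$ in strictly negative weights. Formal completion of $X$ along $Z$ base-changes to formal completion of $X \times_Z U$ along $U$ by Lemma \ref{FormalNbhd}, which corresponds to derived completion of $A$ along $I = \pi_0(A^{\mathrm{wt}<0})$. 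Since the pro-graded quasi-isomorphism property is local on $Z$ and is detected by the stalks of $\Spec(k^\Z)$ (by the earlier discussion in this section), it suffices to prove the affine statement.

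For the affine statement, by noetherianity pick homogeneous generators $f_1, \ldots, f_r$ of $I$ of weights $-w_1, \ldots, -w_r$ with $w_i \geq 1$, possible because $A$ is generated in strictly negative weights. By Proposition \ref{DerivedCompletion}, the derived completion is computed as $\widehat{A}_I \simeq R\lim_n K(f_1^n, \ldots, f_r^n; A)$, where $K(f_1^n, \ldots, f_r^n; A)$ is the Koszul-type complex (a tensor product of fibers $\mathrm{fib}(A \xrightarrow{f_i^n} A)$). In cohomological degree $-s$ and weight $-j$, this complex has summands
\begin{equation*}
\bigoplus_{|S|=s} A^{\mathrm{wt} = -j + n\sum_{i \in S} w_i}.
\end{equation*}
Since $A$ is concentrated in non-positive weights, every such summand vanishes as soon as $n \sum_{i \in S} w_i > j$; for $|S| \geq 1$ this is automatic once $n > j$. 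Therefore, in each fixed weight $-j$, the pro-system $\{K(f_1^n, \ldots, f_r^n; A)\}_n$ stabilizes for $n > j$ to $A^{\mathrm{wt}=-j}$ concentrated in cohomological degree $0$, so the limit equals $A^{\mathrm{wt}=-j}$, and the structure map $A \to \widehat{A}_I$ is a pro-graded quasi-isomorphism.

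The main technical subtlety will be verifying that the Koszul model for derived completion carries the natural $\G_m$-equivariant structure (immediate from the homogeneity of the $f_i^n$ and functoriality of $R\lim$) and extending to ind-stacks by writing $X = \colim X_\alpha$ as a filtered colimit of closed $\G_m$-equivariant embeddings of QCA stacks; compatibility of formal completion and pro-graded quasi-isomorphism with such filtered colimits is then routine. The statement about global functions $\OO(X) \to \OO(\widehat{X}_Z)$ follows from the sheaf-level pro-graded quasi-isomorphism because, by the stabilization argument above, both sides are in each fixed weight represented by a classical module, so the induced map of underlying objects in $\cat{Vect}_k$ is a pro-graded isomorphism as claimed.
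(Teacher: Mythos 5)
Your proof is correct and takes essentially the same approach as the paper's: choose homogeneous negative-weight generators of $I$, express the derived completion as $R\lim_n$ of the Koszul complexes $K(f_1^n,\ldots,f_r^n;A)$ via Proposition~\ref{DerivedCompletion}, and observe that in any fixed weight $-j$ the nonzero terms of the pro-system stabilize to $A^{\mathrm{wt}=-j}$ once $n>j$, since $A$ is concentrated in non-positive weights. One small imprecision: in the reduction step you ``cover $Z$ by affine opens,'' but in the intended applications $Z$ is a stack (e.g.\ the constant loops $X\hookrightarrow\LL^u X$), so one should instead take a $\G_m$-equivariant affine atlas of $Z$ (or of $X$, as the paper does) and check the pro-graded quasi-isomorphism via smooth descent; the affine-local computation then proceeds exactly as you wrote it.
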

\begin{proof}
Choose generators $f_1, \ldots, f_r$ of $I$.  By Proposition \ref{DerivedCompletion} we can compute the homotopy limit $\widehat{A_I}$ via the limit
$$\widehat{A_I} = \lim_n  \left( A \otimes_{\pi_0(A)} K^\bullet_n \right)$$
where $K^\bullet_n$ is the Koszul complex for $f_1^n, \ldots, f_r^n \in \pi_0(A)$.  Since $f_1, \ldots, f_r$ are of strictly negative weight, for large $n$, $(K^\bullet_n)^{\text{wt} \geq -k} = (\pi_0(A))^{\text{wt} \geq -k}$ for any $k$.  Furthermore, homotopy limits can be computed in the derived category of $k$-complexes, and in particular we can compute the homotopy limit on each graded piece.  Thus, $(\widehat{A_I})^{\text{wt}=k}$ is a computed by a limit which stabilizes at $A^{\text{wt}=k}$, proving the claim.  For the global claim where $X$ is an ind-scheme, one can pass to an open affine $\G_m$-closed cover (which exists since $\G_m$ is a torus).  For the global claim where $X$ is an ind-stack, one can check the equivalence on a cover of $X$.
\end{proof}

Recall that by Remark 6.11 of \cite{BZN:LC} that there are embeddings $\LLf(X/G) \hookrightarrow \LL^u(X/G) \hookrightarrow \mathbb{T}_X[-1]$.  Thus, formal loops and unipotent loops inherit compatible $\G_m$-actions and their functions are $\mathbb{Z}^{\leq 0}$ pro-graded.    We have the following.
\begin{lemma}\label{GradedIso}
Let $X$ be a geometric stack.  The map induced by pullback
$$\begin{tikzcd} \OO(\LL^u X) \arrow[r, "\simeq"] & \OO(\LLf X)\end{tikzcd}$$
is a pro-graded (quasi-)isomorphism.
\end{lemma}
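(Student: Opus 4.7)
The plan is to apply Lemma \ref{CompleteWeights} to $\LL^u X$ equipped with its natural contracting $\G_m$-action, reducing the desired pro-graded quasi-isomorphism to a statement about formal completion at the fixed locus.

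First, I would invoke Lemma \ref{LemContracting} to observe that the $\G_m$-action on $\LL^u X$ (arising from scaling on $B\G_a$ through the presentation $\LL^u X = \Map(B\G_a, X)$) contracts to its fixed locus, which is the closed substack $X \hookrightarrow \LL^u X$ of constant loops. Consequently, $\OO(\LL^u X)$ is a $\mathbb{Z}^{\le 0}$ pro-graded dg algebra whose weight-zero part corresponds to the constant loop locus, putting us precisely in the setting of Lemma \ref{CompleteWeights}.

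Second, I would identify the formal completion $\widehat{(\LL^u X)}_X$ with $\LLf X$. Using the inclusions $\LLf X \hookrightarrow \LL^u X \hookrightarrow \LL X$ from Remark 6.11 of \cite{BZN:LC}, and applying Lemma \ref{FormalNbhd} to the closed embedding $X \hookrightarrow \LL X$ base-changed along $\LL^u X \hookrightarrow \LL X$, one obtains
\[
\LLf X \times_{\LL X} \LL^u X \;=\; \widehat{(\LL^u X)}_{\pi_0(X \times_{\LL X} \LL^u X)^{red}} \;=\; \widehat{(\LL^u X)}_X,
\]
since $X \hookrightarrow \LL^u X$ already factors the inclusion of constant loops. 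But the left-hand side equals $\LLf X$, because $\LLf X \hookrightarrow \LL^u X$. Combining with Lemma \ref{CompleteWeights} yields that the structure map $\OO(\LL^u X) \rightarrow \OO(\widehat{(\LL^u X)}_X) = \OO(\LLf X)$ is a pro-graded quasi-isomorphism, and a check using the functor-of-points description would confirm that this structure map is the one induced by the closed embedding $\LLf X \hookrightarrow \LL^u X$ (the pullback map of the lemma statement).

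The main technical obstacle will be verifying that $\LL^u X$ satisfies the hypotheses of Lemma \ref{CompleteWeights}, namely that it is an ind-stack with a representable $\G_m$-action. In the quotient stack case $X = Y/G$ that we really care about in applications (e.g.\ Theorem \ref{HPLoc}), Proposition \ref{UnipotentLoopsQuotient} explicitly presents $\LL^u(Y/G)$ as a fiber product $\widehat{U}/G \times_{G/G} \LL(Y/G)$, exhibiting it as a formal completion of the geometric stack $\LL(Y/G)$ along a closed substack, which is the relevant ind-stack structure. For more general geometric stacks $X$ one can either reduce to an affine atlas (checking the pro-graded isomorphism locally, using that smooth descent is compatible with the $\G_m$-weight decomposition) or work directly with the embedding $\LL^u X \hookrightarrow \mathbb{T}_X[-1]$ into the odd tangent bundle, whose ind-scheme/ind-stack structure is manifest.
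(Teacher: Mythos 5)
Your proof takes essentially the same route as the paper's: invoke Lemma \ref{LemContracting} to get a contracting $\G_m$-action on $\LL^u X$, identify $\LLf X$ as the formal completion $\widehat{(\LL^u X)}_X$ at constant loops, and then apply Lemma \ref{CompleteWeights}. You fill in a couple of points the paper leaves implicit — the use of Lemma \ref{FormalNbhd} together with the monomorphism property to justify $\LLf X = \widehat{(\LL^u X)}_X$, and the observation that one must check the ind-stack hypothesis of Lemma \ref{CompleteWeights}, which the paper addresses only by a passing citation of Lemma \ref{LoopsGeometric} — so your version is, if anything, slightly more careful.
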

\begin{proof}
An argument is outlined in Corollary 2.7 of \cite{BZN:LR}; we will repeat it for convenience.  By Lemma \ref{LoopsGeometric}, $\LL X$ is geometric.  The formal loops $\LLf X$ are the completion of the unipotent loops $\LL^u X$ along constant loops, and the action is contracting by Lemma \ref{LemContracting}.  The statement follows by Lemma \ref{CompleteWeights}.  
\end{proof}

\begin{exmp}\label{CounterExmpTateUni}
The pro-graded isomorphism of Lemma \ref{GradedIso} may fail to be an isomorphism.  For example, take $X = G/U$ where $U$ is any unipotent subgroup of $G$; then we have that
$$\LL((G/U)/G) = \LL(BU) = U/U \rightarrow \LL(BG) = G/G$$
has image inside the unipotent cone of $G$.  In particular,
$$\LL^u(BU) = \LL(BU) = U/U \;\;\;\;\;\;\; \LLf(BU) = \widehat{\mf{u}}/U.$$ 
For example, if $U = B\G_a$, then the map is
$$\OO(\LL^u(B\G_a)) = \OO(\G_a \times B\G_a) = k[x, \eta] \rightarrow \OO(\LLf(B\G_a)) = \OO(\widehat{\G}_a \times B\G_a) = k[[x]][\eta]$$
is a pro-graded isomorphism but not an isomorphism, where $|x| = 0$ is a generator for $\OO(\G_a)$ and $|\eta| = 1$ is a generator for $\OO(B\G_a)$.
\end{exmp}

Using the fact that the map is a pro-graded isomorphism, we can show in the case of a unipotent group that the map on $\Tate$-equivariant functions is an isomorphism by a finiteness argument.  Essentially, we show that applying the Tate construction collapses enough of the target to produce an isomorphism.  We include the following proposition as an easy precursor to the next one; it is not required in future arguments.
\begin{cor}
Let $U$ be a unipotent algebraic group, and $X$ a quasicompact algebraic space with a $U$-action.  Then, the natural map induced by pullback
$$\begin{tikzcd} \OO(\LL^u(X/U))^{\Tate} \arrow[r, "\simeq"] & \OO(\LLf(X/U))^{\Tate}\end{tikzcd}$$
is an isomorphism.  In particular, taking $X = \pt$,
$$\begin{tikzcd} \OO(\LL^u(BU))^{\Tate} = \OO(U/U)^{\Tate} \arrow[r, "\simeq"] & \OO(\LLf(BU)) \simeq k((u))\end{tikzcd}$$
is an isomorphism.
\end{cor}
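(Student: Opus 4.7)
The approach combines Lemma \ref{GradedIso} with Lemma \ref{ProgradedFinite}. Since $U$ is affine algebraic and $X$ is a quasicompact algebraic space, the quotient $X/U$ is a geometric stack, so Lemma \ref{GradedIso} applies: the pullback $\OO(\LL^u(X/U)) \to \OO(\LLf(X/U))$ is a pro-graded quasi-isomorphism for the $\G_m$-grading arising from the $B\G_a \rtimes \G_m$-action on the unipotent loop space. One mild verification is that the ambient geometric hypothesis truly holds (affine diagonal) for $X/U$; this reduces to affineness of stabilizers in $U$, which is automatic.

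Next I would verify that the Tate construction preserves this pro-graded structure. The $\G_m$-factor of $B\G_a \rtimes \G_m$ rescales $B\G_a$, hence acts on the Tate generator $u$ with weight $+1$; since the mixed differential $\epsilon$ carries $\G_m$-weight $-1$, the combination $u\cdot\epsilon$ is weight-preserving. Consequently the Tate construction decomposes coherently into $\G_m$-weight pieces, and weight-by-weight quasi-isomorphism is preserved. Thus $\OO(\LL^u(X/U))^{\Tate} \to \OO(\LLf(X/U))^{\Tate}$ is again a pro-graded quasi-isomorphism, and by the homological criterion of the excerpt we may check the required finiteness at the level of cohomology modules.

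To promote this to a genuine isomorphism via Lemma \ref{ProgradedFinite}, I would show that the cohomology of $\OO(\LLf(X/U))^{\Tate}$ is generated, as a $k((u))$-module, by elements supported at finitely many $\G_m$-weights. By Proposition \ref{StackCoh}, this Tate cohomology identifies with $H^\bullet((X/U)^{an};k)\,\widehat{\otimes^!}\, k((u))$. The unipotence of $U$ enters crucially here: $U^{an}$ is contractible as a topological space (being a connected unipotent complex Lie group, hence homeomorphic to $\mathbb{R}^N$), so the Borel construction $X^{an}/U^{an}$ has the homotopy type of $X^{an}$, giving $H^\bullet((X/U)^{an};k) \simeq H^\bullet(X^{an};k)$. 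Under the exponential/HKR identification this Betti cohomology sits at a single $\G_m$-weight (weight zero), so the target is generated at one weight as a $k((u))$-module. Lemma \ref{ProgradedFinite} then upgrades the pro-graded quasi-isomorphism to the desired isomorphism. In the toy case $X = \pt$, $U = \G_a$, one verifies directly that $\OO(\LL^u(B\G_a))^{\Tate} \simeq k((u)) \simeq \OO(\LLf(B\G_a))^{\Tate}$, which serves as a sanity check of the weight bookkeeping.

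The principal technical obstacle is carefully aligning the various $\G_m$-weight gradings through the exponential, HKR, Tate, and Bhatt-type identifications, especially in the stacky setting of Proposition \ref{StackCoh}; in particular one must verify that the noncommutative Hodge filtration's grading on $u$ is compatible with the $\G_m$-action coming from $B\G_a \rtimes \G_m$ on the odd tangent bundle. A secondary subtlety is that Proposition \ref{StackCoh} is invoked contingent upon the forthcoming Artin stack extension of \cite{Bh:DDR}, but once this is available the finiteness conclusion is essentially automatic from contractibility of $U^{an}$.
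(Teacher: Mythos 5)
Your proof follows essentially the same route as the paper: reduce to the pro-graded isomorphism of Lemma~\ref{GradedIso}, identify the target with $H^\bullet(X^{an};k)((u))$ via contractibility of $U^{an}$, and apply Lemma~\ref{ProgradedFinite}. The extra verifications you flag (affine diagonal for $X/U$, $\G_m$-equivariance of the Tate differential) are appropriate, and your weight analysis of $u\cdot\epsilon$ is correct.

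There is one step where your bookkeeping differs from the paper's and is left slightly unjustified. You assert that under the exponential/HKR identification the Betti cohomology sits at a \emph{single} $\G_m$-weight (zero). But the natural weight coming from the scaling action on $\widehat{\mathbb{T}}_X[-1]$ places $\Omega^i$ at weight $-i$, so a de~Rham class in $H^n$ sits at weight $-n$, not weight $0$. Your ``weight zero'' normalization corresponds to replacing $H^n$ with $H^n u^n$, which is legitimate since $u$ is invertible, but this shearing reparametrization deserves a sentence. The paper's own argument sidesteps the issue entirely: it observes that $H^\bullet(X^{an};k)$ is \emph{finite-dimensional} for $X$ a quasicompact algebraic space, hence decomposes into finitely many weight-homogeneous pieces and therefore admits a homogeneous basis, which is exactly what Lemma~\ref{ProgradedFinite} requires. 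This is cleaner because it does not commit to a particular normalization and makes the role of quasicompactness transparent. Your argument reaches the same conclusion, but you should either justify the weight-zero placement via the shearing $H^n \mapsto H^n u^n$, or simply invoke finite-dimensionality directly as the paper does.
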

\begin{proof}
By Lemma \ref{GradedIso}, the map is a pro-graded isomorphism.  Furthermore, applying the Tate construction, we have that $\OO(\LLf(X/U)) \simeq H^\bullet(X; k)((u))$ since $U$ is contractible, where $u$ has cohomological degree 2 and weight 1.  The statement follows from Lemma \ref{ProgradedFinite}: for quasicompact algebraic spaces $X$, $H^\bullet(X; k)$ is finite-dimensional and therefore has has a homogeneous basis.
\end{proof}

A tweaking of the above argument gives us the reductive case.
\begin{cor}
Let $G$ be a reductive algebraic group, and $X$ a quasicompact algebraic space with a $G$-action.  Then, the natural map induced by pullback
$$\begin{tikzcd} \OO(\LL^u(X/G))^{\Tate} \arrow[r, "\simeq"] & \OO(\LLf(X/G))^{\Tate}\end{tikzcd}$$
is an isomorphism.
\end{cor}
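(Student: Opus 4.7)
The approach mirrors the preceding proof for unipotent $U$, with the reductive case requiring a different finiteness input since $H^\bullet(BG^{an};k)$ is no longer finite dimensional. By Lemma~\ref{GradedIso}, the pullback map $\OO(\LL^u(X/G)) \to \OO(\LLf(X/G))$ is a pro-graded quasi-isomorphism: $\LL^u(X/G)$ is geometric by Lemma~\ref{LoopsGeometric}, the $\G_m$-action on it contracts to the constant loops by Lemma~\ref{LemContracting}, and $\LLf(X/G)$ is precisely the completion of $\LL^u(X/G)$ along this fixed locus, so Lemma~\ref{CompleteWeights} applies. The Tate construction preserves this pro-graded structure (the limits defining $(-)^{S^1}$ and $(-)^{\Tate}$ can be computed weight-by-weight since the weight grading is bounded above by zero), so the induced map $\OO(\LL^u(X/G))^{\Tate} \to \OO(\LLf(X/G))^{\Tate}$ is also a pro-graded quasi-isomorphism.

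To upgrade the pro-graded quasi-isomorphism to a genuine quasi-isomorphism, I would apply Lemma~\ref{ProgradedFinite} with coefficient algebra $k((u))$. Using Proposition~\ref{StackCoh}, the target is identified with $H^\bullet((X/G)^{an};k)\,\widehat{\otimes^!}\,k((u))$. The crucial observation is that since the cyclic parameter $u$ has weight one and is a unit in $k((u))$, any cohomology class of fixed finite weight can be normalized by an appropriate power of $u^{\pm 1}$ to lie in weight zero. Consequently $\OO(\LLf(X/G))^{\Tate}$ is generated as a $k((u))$-module by weight-zero elements, which trivially have finite weight support. The hypothesis of Lemma~\ref{ProgradedFinite} is therefore satisfied and the pro-graded quasi-isomorphism is an isomorphism on underlying complexes.

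The main obstacle is that the target carries a nontrivial Hodge-filtered topology built into $\widehat{\otimes^!}$, which could a priori introduce elements of infinite weight support lying outside the $k((u))$-span of the weight-zero generators. The resolution uses that for $X$ a quasicompact algebraic space and $G$ reductive, each $H^n((X/G)^{an};k)$ is finite dimensional and sits in bounded weights (via the Leray sequence for $X/G \to BG$, whose fiber $X^{an}$ has finite-dimensional cohomology in each degree and whose base has $H^\bullet(BG;k)$ finite-dimensional in each degree), so after inverting $u$ the Hodge-completion coincides with the weight-completion, and the weight-zero generating set already exhausts the completion. As a sanity check one may first treat $X = \pt$: the target is the Hodge-completion of $k[\mf{g}^*]^G \otimes k((u))$, whose generators are furnished by Chevalley's theorem and can be shifted into weight zero via $u^{\pm}$. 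Granted this verification, Lemma~\ref{ProgradedFinite} applies and the corollary follows.
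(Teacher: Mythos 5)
Your overall strategy — pro-graded quasi-isomorphism via Lemmas~\ref{GradedIso}/\ref{CompleteWeights}, then upgrade with Lemma~\ref{ProgradedFinite} — is the right one, but the finiteness input you supply is not correct, and the gap is visible already for $X=\pt$.

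You apply Lemma~\ref{ProgradedFinite} with coefficient algebra $A=k((u))$ and claim that $W=\OO(\LLf(X/G))^{\Tate}\simeq H^\bullet((X/G)^{an};k)\,\widehat{\otimes^!}\,k((u))$ is generated over $k((u))$ by weight-homogeneous (in fact weight-zero) elements. This is false. Take $G$ of rank one, $X=\pt$, so $W\simeq k[[h]]\,\widehat{\otimes^!}\,k((u))$ with $h$ of weight $-1$, degree $0$, and $u$ of weight $+1$, degree $2$. The element $\sum_{n\ge 0} h^n\in k[[h]]\subset W$ has coefficient of $h^n$ equal to the constant $1\in k((u))$ for every $n$. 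Any finite $k((u))$-linear combination $\sum_i a_i(u)\,w_i$ with $w_i$ weight-homogeneous of weight $w_i^0$ has coefficient of $h^n$ lying in $\sum_i u^{\,n+w_i^0}a_i(u)\,k$, whose $u$-adic valuation tends to $+\infty$ as $n\to\infty$ (since $a_i\in k((u))$ has bounded-below valuation and the $w_i^0$ are finite). So the constant coefficient sequence cannot arise, and $\sum h^n$ lies outside the $k((u))$-span of weight-homogeneous elements. Your ``normalize by $u^{\pm}$'' argument only rescales a fixed homogeneous element; it does not produce elements with unboundedly many homogeneous components, which is exactly what the $\widehat{\otimes^!}$-completion contains. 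Likewise, ``the weight-zero generating set already exhausts the completion'' conflates topological density with module generation — Lemma~\ref{ProgradedFinite} needs honest finite $A$-linear combinations.

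The paper sidesteps this by \emph{first} computing the $X=\pt$ case directly: $\OO(\LL^u(BG))^{\Tate}\simeq k[[\mf h]]^W((u))\simeq\OO(\LLf(BG))^{\Tate}$ (the former via the explicit description of $\LL^u(BG)$ in Proposition~\ref{UnipotentLoopsQuotient}, the latter via Proposition~\ref{StackCoh}), and then uses that the comparison map for general $X$ is linear over the \emph{larger} algebra $k[[\mf h]]^W((u))$, not merely $k((u))$. Since $\mf h$ carries nonzero weight, $k[[\mf h]]^W((u))$ already contains weight-inhomogeneous elements like $\sum h^n$, so it suffices to exhibit \emph{finitely many} weight-homogeneous generators over $k[[\mf h]]^W$ — which the paper gets from finite generation of $H^\bullet_G(X;k)$ over $H^\bullet(BG;k)$ (derived Cartan model) together with the degree-weight shearing. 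Your argument omits both the base case and the passage to the larger coefficient ring, and without them the hypothesis of Lemma~\ref{ProgradedFinite} fails.
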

\begin{proof}
Let $\mf{h}$ denote the universal Cartan of the Lie algebra $\mf{g}$ of $G$, and $W$ the universal Weyl group acting on $\mf{h}$.  By Proposition \ref{StackCoh} we have
$$\OO(\LLf(BG))^{\Tate} \simeq C_{dR}^\bullet(BG; k) \otimes^! ((u)) \simeq k[[\mf{h}]]^W((u))$$ 
where $\h$ is in cohomological degree zero and subcomplex $H^\bullet(BG; k)$ is given by $k[\mf{h}u]^W$.  By Proposition \ref{UnipotentLoopsQuotient}, and the identification $G//G \simeq T//W$ for reductive groups $G$, we have that $\LL^u(BG) = \LL(BG) \times_{T//W} \widehat{\{0\}}$, and in particular by base change,
$$\OO(\LL^u(BG)) \simeq \OO(\widehat{\{0\}}$$
so that $\OO(\LL^u(BG))^{\Tate} \simeq \OO(\LLf(BG))^{\Tate} \simeq k[[\h]]^W((u))$.  In particular,
$$\OO(\LL^u(X/G))^{\Tate} \rightarrow \OO(\LLf(X/G))^{\Tate}$$
is a map of module objects over the algebra object $k[[\h]]^W((u))$ in the category $\QCoh(\Spec(k^\Z))$, where $\h$ has weight -1 and cohomological degree 0, and its cohomology groups are linear over $H^0(k[[\h]]^W((u))) = k[[\h]]^W$.  We claim that the cohomology groups of the target
$$H^i(\OO(\LLf(X/G))^{\Tate}) \simeq H^i(C_{dR}^\bullet(X/G; k) \otimes^! k((u)))$$ 
are finitely generated over $k[[\h]]^W$ by weight-homogeneous generators; assuming the claim, the result follows from Lemma \ref{CompleteWeights} and Lemma \ref{ProgradedFinite}.

To see the claim, we trace through the identification of Proposition \ref{StackCoh}, keeping track of the weights.  The cotangent complex $\mathbb{L}_X$ has weight -1 and $u$ has weight 1 by convention.  Thus, we find that under the identification of Proposition \ref{StackCoh}, $C^\bullet_{dR}(X/G; k)$ has weight 0 and $u$ has weight 1.  Therefore, it suffices to show that the ring $H^\bullet(X/G; k)$ is finitely generated over $H^\bullet(BG; k)$ by finitely many cohomologically homogeneous generators.   
This can be observed via the derived Cartan model for equivariant cohomology, which computes $H^\bullet(X/G; k)$ via a double complex whose $E_1$ page is $H^\bullet(X) \otimes_k k[[\mf{g}]]^G$, completing the claim.  
\end{proof}

We can now prove Theorem \ref{FormalUni}, which we restate for convenience.
\begin{thm}\label{FormalUniProof}
Let $X$ be a quasicompact algebraic space with an action of an affine algebraic group $G$.  The natural map induced by pullback
$$\begin{tikzcd} \OO(\LL^u(X/G))^{\Tate} \arrow[r, "\simeq"] & \OO(\LLf(X/G))^{\Tate}\end{tikzcd}$$
is an isomorphism.
\end{thm}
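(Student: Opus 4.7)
The plan is to adapt the reductive-case argument to a general affine algebraic group $G$ by reducing via the Levi decomposition $G = L \ltimes U$, available in characteristic zero with $L$ reductive and $U$ the unipotent radical.  First, Lemma \ref{GradedIso} gives that $\OO(\LL^u(X/G)) \to \OO(\LLf(X/G))$ is a pro-graded quasi-isomorphism, and since the $\G_m$-action commutes with the loop rotation $S^1$-action on each side, the induced map on Tate constructions is again a pro-graded quasi-isomorphism.  The goal is then to upgrade this to a genuine quasi-isomorphism via Lemma \ref{ProgradedFinite}, which requires a suitable finite-generation statement with weight-homogeneous generators on the target.

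Next, I would compute the relevant cohomology using the Levi decomposition.  By Proposition \ref{StackCoh}, $\OO(\LLf(X/G))^{\Tate} \simeq H^\bullet(X^{an}/G^{an}; k) \,\widehat{\otimes^!}\, k((u))$.  Since $U^{an}$ is contractible (being an iterated $\G_a$-extension), $BU^{an}$ has trivial cohomology; the Serre spectral sequence for $BU \to BG \to BL$ then collapses to give $H^\bullet(BG^{an}; k) \simeq H^\bullet(BL^{an}; k) \simeq k[[\h]]^W$.  For $H^\bullet(X^{an}/G^{an}; k)$, the Serre spectral sequence for $X \to X/G \to BG$ has $E_2 \simeq H^\bullet(BG^{an}; k) \otimes_k H^\bullet(X^{an}; k)$, and since $H^\bullet(X^{an}; k)$ is finite-dimensional for a quasicompact algebraic space, this yields that $H^\bullet(X^{an}/G^{an}; k)$ is finitely generated over $k[[\h]]^W$ by cohomologically-homogeneous generators.

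Finally, I would handle the base case $X = \pt$ first, where the target $k[[\h]]^W((u))$ is generated over itself by the unit at a single weight, so Lemma \ref{ProgradedFinite} applies trivially to establish $\OO(\LL^u(BG))^{\Tate} \simeq k[[\h]]^W((u))$.  This endows both sides of the general map with a $k[[\h]]^W$-module structure in each cohomological degree compatible with the map.  The degree-weight shearing then identifies cohomologically-homogeneous finite generation of $H^\bullet(X^{an}/G^{an}; k)$ over $k[[\h]]^W$ with weight-homogeneous finite generation of $H^i(\OO(\LLf(X/G))^{\Tate})$ over $k[[\h]]^W$, and Lemma \ref{ProgradedFinite} upgrades the pro-graded quasi-isomorphism in each cohomological degree to a genuine isomorphism.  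The main obstacle is verifying the cohomologically-homogeneous finite-generation claim for $H^\bullet(X^{an}/G^{an}; k)$ as a module over $k[[\h]]^W$ when $G$ is non-reductive; the Levi decomposition reduces this to the known reductive case, but one must be careful in lifting from the associated graded produced by the Serre spectral sequence to honestly homogeneous generators of the actual cohomology.
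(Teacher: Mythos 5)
Your approach via the Levi decomposition $G = L \ltimes U_0$ is genuinely different from the paper's, which instead embeds $G$ as a closed subgroup of a reductive group $K$ (e.g.\ $K = GL_n$), observes the equivalence of stacks $X/G \simeq (X \times^G K)/K$, and directly applies the already-established reductive corollary to the quasicompact algebraic space $X \times^G K$. That one-line reduction sidesteps all the issues your approach raises.

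The gap in your argument is in the base case $X = \pt$. You want to conclude $\OO(\LL^u(BG))^{\Tate} \simeq k[[\h]]^W((u))$ by applying Lemma \ref{ProgradedFinite}, asserting that ``the target $k[[\h]]^W((u))$ is generated over itself by the unit.'' But Lemma \ref{ProgradedFinite} requires the map $f: V \to W$ to be a pro-graded isomorphism \emph{of $A$-modules} with $W$ generated over $A$ by weight-homogeneous elements, and the only natural choice of $A$ making $W$ generated by finitely many weight-homogeneous elements is $A = W = k[[\h]]^W((u))$ itself. Yet the source $V = \OO(\LL^u(BG))^{\Tate}$ carries no $k[[\h]]^W((u))$-module structure a priori: the map of algebras goes $V \to W$, so you cannot restrict scalars, and you do not yet know that $V \cong k[[\h]]^W((u))$ --- that is precisely what you are trying to prove. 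The natural algebra over which both sides are modules is $k((u))$, but $k[[\h]]^W((u))$ is infinitely generated over $k((u))$ (with $\h$ in weight $-1$), so Lemma \ref{ProgradedFinite} does not apply. In the reductive case the paper establishes the base case not via Lemma \ref{ProgradedFinite} but by a \emph{direct computation} of $\OO(\LL^u(BG))$ using Proposition \ref{UnipotentLoopsQuotient} and the Chevalley description of the unipotent cone as the fiber of $G \to T//W$ over $1$. For non-reductive $G$ the unipotent cone (the preimage of the unipotent cone of $L$ under $G \to L$, i.e.\ $U_L \times U_0$ as a variety) does not admit an equally clean description inside the adjoint quotient, so this computation does not transfer, and your Levi-based reduction would need to supply a replacement for it. Your reduction of $H^\bullet(BG^{an}; k)$ to $H^\bullet(BL^{an}; k)$ via contractibility of $U_0^{an}$ is fine for the \emph{formal} side, but that only identifies the target; it does not help compute the source $\OO(\LL^u(BG))^{\Tate}$.
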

\begin{proof}
Every affine algebraic group $G$ embeds as a subgroup of a reductive group $K$.  Apply the previous corollary to $(X \times^G K)/K$.  Note that $X \times^G K$ is not guaranteed to be a scheme, but is always an algebraic space.
\end{proof}

\end{document}